\newtheorem{theo}{Theorem}[section]
\newtheorem{thm}[theo]{Theorem}
\newtheorem{lem}[theo]{Lemma}
\newtheorem{cor}[theo]{Corollary}
\newtheorem*{thmM}{Main Theorem}
\newtheorem*{thmP}{Main Problem}
\theoremstyle{definition}
\newtheorem{dfn}[theo]{Definition}
\theoremstyle{remark}
\newtheorem{remark}[theo]{Remark}
\numberwithin{equation}{section}
\def\bw{\bowtie}
\newcommand{\crp}{\mathrm{CrP}}
\newcommand{\rep}{\mathrm{Rep}}
\newcommand{\hdi}{\ol{\mathrm{Hdi}}}
\def\R{\mathbb{R}}
\def\Z{\mathbb{Z}}
\def\Q{\mathbb{Q}}
\def\Mc{\mathcal{M}}
\newcommand{\oc}{\ol{c}}
\newcommand{\oy}{\ol{y}}
\newcommand{\oq}{\ol{q}}
\newcommand{\Qb}{\mathbb{Q}}
\newcommand{\U}{\mathcal{U}}
\newcommand{\Bc}{\mathcal{B}}
\newcommand{\K}{\mathcal{K}}
\newcommand{\F}{\mathcal{F}}
\newcommand{\C}{\mathbb{C}}
\newcommand{\disk}{\mathbb{D}}
\newcommand{\cdisk}{\ol{\mathbb{D}}}
\newcommand{\bbd}{\mathbb{D}}
\newcommand{\la}{\lambda}
\newcommand{\tpi}{\tilde \pi}
\newcommand{\ol}{\overline}
\newcommand{\sm}{\setminus}
\newcommand{\A}{\mathcal{A}}
\newcommand{\hell}{\hat{\ell}}
\newcommand{\bd}{\mathrm{Bd}}
\newcommand{\lam}{\mathcal{L}}
\newcommand{\hlam}{\mathcal{\widehat L}}
\newcommand{\tlam}{\mathcal{\widetilde L}}
\newcommand{\ch}{\mathrm{CH}}
\newcommand{\si}{\sigma}
\newcommand{\uc}{\mathbb{S}}
\newcommand{\rb}{\mathbb{Q}}
\newcommand{\ucirc}{\mathbb{S}}
\newcommand{\om}{\omega}
\newcommand{\al}{\alpha}
\newcommand{\ssl}{\mathfrak{Lam}}
\newcommand{\g}{\mathfrak{g}}
\newcommand{\be}{\beta}
\newcommand{\M}{\mathcal{M}}
\newcommand{\Cc}{\mathcal{C}}
\newcommand{\ff}{\mathfrak{F}}
\newcommand{\hto}{\hookrightarrow}
\def\Zc{\mathcal{Z}}
\def\cch{\mathrm{CCh}}
\def\Fc{\mathcal{F}}
\def\poly{\mathrm{Poly}}
\def\Ac{\mathcal{A}}
\def\phd{\mathrm{PHD}}
\def\D{\mathbb{D}}
\renewcommand\le{\leqslant}
\renewcommand\ge{\geqslant}
\def\0{\varnothing}
\begin{document}

\date{\today}

\title[A model of the cubic connectedness locus]{A model of the cubic connectedness
locus}

%\dedicatory{Dedicated to the memory of Yuri Ilyich Lyubich}

\author[A.~Blokh]{Alexander~Blokh}

\thanks{The first named author was partially
supported by NSF grant DMS--2349942}

\thanks{The second named author was partially
supported by NSF grant DMS--1807558}

\author[L.~Oversteegen]{Lex Oversteegen}

\author[V.~Timorin]{Vladlen~Timorin}

\thanks{The third named author has been supported by the HSE University Basic Research Program.}

\author[Y.~Wang]{Yimin Wang}

\address[Alexander~Blokh and Lex~Oversteegen]
{Department of Mathematics\\ University of Alabama at Birmingham\\
Birmingham, AL 35294}

\footnote{Corresponding author is Alexander Blokh}

\address[Vladlen~Timorin]
{Faculty of Mathematics\\
HSE University\\
6 Usacheva str., Moscow, Russia, 119048}

\address[Yimin~Wang]
{Department of Mathematics, Shanghai Normal University, Shanghai, China}

\email[Alexander~Blokh]{ablokh@uab.edu}
\email[Lex~Oversteegen]{overstee@uab.edu}
\email[Vladlen~Timorin]{vtimorin@hse.ru}
\email[Yimin~Wang]{yiminwang@shnu.edu.cn}

\subjclass[2010]{Primary 37F20, 37F10; Secondary 37F25}

\keywords{Complex dynamics; laminations; Mandelbrot set; Julia set}

\begin{abstract}
We %construct
 describe a locally connected model of the cubic connectedness
locus. The model is obtained by constructing a decomposition  of the space of critical portraits
and collapsing  elements of the decomposition  into points. This model is similar to a quotient
of the combinatorial quadratic Mandelbrot set in which
all baby Mandelbrot sets, as well as the filled Main Cardioid, are collapsed to points.
All fibers of the model, possibly except one, are connected. The authors are not aware of
other known models of the cubic connectedness locus.
\end{abstract}

\maketitle

\tableofcontents

\section{Introduction}

We assume familiarity with complex polynomial dynamics and its standard notation
($K_f$ is the filled Julia set, $J_f$ is the Julia set of $f$, etc).

Let $\poly_d$ be the space of all monic centered polynomials of degree $d>1$, i.e.,
%these are
maps $f(z)=z^d+a_{d-2}z^{d-2}+\dots+a_0:\C\to\C$
%$$
%f(z)=z^d+a_{d-2}z^{d-2}+\dots+a_1z+a_0,\quad a_0, a_1,\dots,a_{d-2}\in\C.
%$$
(%e.g.,
for example, $\poly_2$ consists of polynomials $Q_c(z)=z^2+c$).
The \emph{connectedness lo\-cus} $\Cc_d$ of $\poly_d$ is the set of all $f\in\poly_d$ with connected $K_f$.
The \emph{principal hyperbolic domain} $\phd_d$ of $\poly_d$ %, by definition,
 consists of all $f\in \poly_d$ with an attracting fixed point $a$ and all critical points in the immediate basin of $a$.
In this case, the immediate basin of $a$ is an invariant Fatou domain $U$
homeomorphic to the unit disk; $\ol{U}$ is a closed Jordan disk coinciding with $K_f$.
For example, $\Cc_2$ identifies with the \emph{Mandelbrot set} $\Mc=\{c\in\C\mid Q_c^n(0)\not\to\infty\}$,
and the boundary of $\phd_2$ is called the \emph{Main Cardioid}.

The study of the structure of $\Mc$, initiated by Douady and Hubbard, culminated
in Thurston's paper \cite{thu85} (although officially published
in 2009, it has circulated as a preprint since 1985) where a detailed
locally connected \emph{combinatorial} model $\M^{comb}$ of
the quadratic connectedness locus $\Cc_2=\M$ is given (it is combinatorial in the sense that its construction
is independent of polynomials). The continuum $\M^{comb}$ is a certain quotient of the closed unit circle $\uc$.
There exists a \emph{monotone} map $\pi_2:\bd(\Mc)\to \Mc^{comb}$ (a continuous map is \emph{monotone} if point-preimages are connected;
the boundary of a set $A$ in a topological space is denoted by $\bd(A)$). Hence, $\bd(\Mc)$
can be understood as the continuum $\Mc^{comb}$ with (possibly) some points in $\Mc^{comb}$ ``blown
up'' into continua (the MLC conjecture states that $\Mc$ is homeomorphic to $\Mc^{comb}$ and the map $\pi_2$ is a homeomorphism).

The cubic case is much harder.
A global view on the parameter space $\poly_3$ of cubic polynomials was first given by
Branner and Hubbard in papers \cite{Bra86,BH88,BH92}, where they described the topology
of $\poly_3\sm\Cc_3$ including that of the bifurcation locus.
Fine structure of $\poly_3\sm \Cc_3$ and, in particular, that of the \emph{shift locus}
(the subset of $\poly_3$ formed by polynomials with \emph{both} critical points escaping)
was further studied (and modeled by means of various analytic or combinatorial constructions) in \cite{dMM08,DS10,dMK11,DP11,Cal22,IW23}.

By the \emph{real part} of $\poly_3$ we mean all polynomials with real coefficients.
Milnor \cite{mil92, mil12} studied the real part of $\poly_3$, classified and parameterized hyperbolic components
of $\poly_3$, and
formulated a further research program focused on dynamically defined slices $\mathrm{Per}_k(\mu)$ of $\poly_3$.
Here, $\mathrm{Per}_k(\mu)$ consists of cubic polynomials with a $k$-periodic point of multiplier $\mu$.
Slices $\mathrm{Per}_1(\mu)$ were then studied in a number of papers. The case of attracting or parabolic $\mu$ was
investigated, e.g., in
\cite{Fau92,Roe99,Roe07,Roe10,BKM10,WanX21,zha22,Zha22a}.
For the case of irrationally neutral $\mu$ (with certain arithmetic conditions imposed on the argument)
see, e.g., \cite{Zak99,BCOT21,BOST22}. %,WY23,YZ23}
For general non-repelling values of $\mu$, slices $\mathrm{Per}_1(\mu)$ are studied in \cite{bh01,bopt14,BOT22b};
corresponding spaces of invariant laminations are described in \cite{bopt16}.
Algebraic geometry and arithmetic aspects of sets $\poly_3$ and $\mathrm{Per}_k(0)$
are investigated in \cite{Kiw06,BdM13,FG18,BEK22,AK23}. %;
%a measure theoretic view is presented in \cite{Duj09,BB11,FG15,Gau16,IM20}

Inou and Kiwi \cite{IK12} developed a generalization of the Douady--Hubbard renormalization scheme \cite{DH-pl}
 for polynomials of degree $>2$.
%: whereas \cite{DH-pl} produces copies of the Mandelbrot set in the Mandelbrot set,
% \cite{IK12} produce, at least at a combinatorial level, standard parameter space pieces (including not only $\Cc_3$
%  but also connectedness loci of other so called \emph{mapping schemata}) in $\Cc_3$.
Further developments of this renormalization theory, covering cubic polynomials, are in \cite{shen2020primitive}. %\cite{SW21}.
Laminational models for specific subsets of $\Cc_3$ are given in \cite{BOPT17,BOPT19}.

Globally, it is known that
$\Cc_3$ is not locally connected \cite{la89}, contains copies of many non-locally connected
quadratic Julia sets \cite{bh01}, and has non-locally connected real part
\cite{EY99, KN04}. The combinatorial rigidity conjecture fails for $\Cc_3$ \cite{Hen03}.
A study of the structure of the principal hyperbolic component of $\Cc_3$ and its closure
can be found in \cite{PT09, bopt14}. Intersections between boundaries of bounded hyperbolic components are described in \cite{QW23}.
Important open parts of $\Cc_3$ can be described in terms of \emph{intertwining surgery} \cite{EY99};
these parts show a direct product structure.
An earlier surgery scheme of Branner--Douady \cite{BD06} describes certain dynamically defined slices
of $\Cc_3$ in terms of parts of the quadratic Mandelbrot set.
\emph{Core entropy} \cite{Gao20,GT21, tby20} is also a useful tool in studying the geography of $\Cc_3$.

As one can see from the brief survey above, even though a lot of work is devoted to studying of $\poly_3$, not much is known in terms of the global structure
of $\Cc_3$. This can be explained by the fact that $\Cc_3$ is complex 2-dimensional
while $\Cc_2=\M$ is complex 1-dimensional.
Also, cubic polynomials are richer dynamically than quadratic ones:
critical points are essential for the dynamics of polynomials, and cubic
polynomials generically have two critical points which makes the cubic case
highly intricate combinatorially \cite{bopt14,bopt18} and results in a
breakdown of crucial steps of \cite{thu85} (e.g., cubic invariant laminations
admit wandering triangles \cite{bo04, bowaga}).

A rare text dealing with combinatorics of the entire
connectedness loci and their models in degree $d\ge 3$ is a recent publication \cite{tby20}. Thurston and his collaborators
devote a significant portion of the paper %\cite{tby20}
 to a detailed discussion of the cubic case.
Yet %\cite{tby20}
 this work does not aim at a model of the connectedness loci.
While %\cite{tby20}
 it uses laminations as an important tool, this is done in a non-dynamical fashion, and
the main focus %of \cite{tby20}
 is different from ours.

\begin{comment}

Namely, in the cubic case \cite{tby20} deals with the space whose elements are pairs
of distinct \emph{compatible} (disjoint inside $\disk$) critical chords.
However, the pairs of critical portraits used in \cite{tby20} are not defined dynamically
(i.e., by using the Riemann map to the complement of the
filled Julia set $K$). Rather they are defined through exploring the symmetry of the
polynomial and its Julia set with respect to the critical points.
Under this approach even polynomials
with attracting fixed points can be directly associated with a pair of critical chords. However, in the end %many distinct
%polynomials have the same critical portrait and
\cite{tby20} does not produce a model of the cubic connectedness locus.

\end{comment}

%Inspired by this interpretation of the classical results by Douady, Hubbard and Thurston,
In the present paper we aim at
constructing an explicit (combinatorial) locally connected continuum $X_3$ and a
map from $\Cc_3$ to $X_3$. This can be viewed as a step towards uncovering
the structure of the cubic connectedness locus $\Cc_3$.
However we use a simpler map than the map $\pi_2$ mentioned above. Let us
briefly explain our approach.

Douady and Hubbard \cite{DH-pl} used subtle analytic tools
to show that $\Mc$ contains infinitely many \emph{baby Mandelbrot sets},
i.e., homeomorphic copies of $\Mc$. There is a hierarchy among baby Mandelbrot sets
each of which is contained in a unique \emph{maximal} baby Mandelbrot set.
One can construct a model continuum $X_2$ of $\M$ by adjusting the map $\pi_2$ through
collapsing all maximal
baby Mandelbrot sets to distinct points, and collapsing the closure of $\phd_2$ to a point, too.
%and then extending this by continuity.
The resulting continuous \emph{monotone} map $\eta_2:\Mc\to X_2$
%collapses the closure $\ol{\phd_2}$ of $\phd_2$ as well as
%all the maximal baby Mandelbrot sets non-disjoint from $\ol{\phd_2}$ to one point.
%This map
reveals the \emph{macro-structure} of $\Mc$, i.e. the mutual disposition of the maximal baby Mandelbrot sets
and the closure $\ol{\phd_2}$ of $\phd_2$.

Any continuous map $h:\M\to Y$ that collapses all baby Mandelbrot sets
and $\phd_2$ to points can be expressed as $h=\tilde h\circ \eta_2$, and so $\eta_2$ can be viewed as the \emph{finest}
among all such maps. Since $\eta_2$ is obtained by adjusting $\pi_2$ through additional collapsing
of connected sets, %it is simpler than $\pi_2$ and
%It is easy to see that % $X_2$ itself is a monotone quotient space of $\cdisk$
%(so the pinched disk interpretation applies to $X_2$) and that
%there exists a monotone map $\mu: \M^{comb}\to X_2 $
%(so that $X_2$ is a monotone quotient space of $\M^{comb}$). Thus,
one can expect that extending the construction of $\eta_2$ and $X_2$
to higher degree cases may be simpler than constructing a more detailed model.

%The map $\eta_2:\Mc\to X_2$ is simpler than $\pi_2:\bd(\Mc)\to\Mc^{comb}$ in another respect:
%while the fibers of the latter are, strictly speaking, unknown (conjecturally, they are trivial),
%the fibers of the former can be described exactly although in a self-referential fashion:
%they are maximal baby Mandelbrot sets.

In the present paper we %extend the above to cubic polynomials and
construct a continuous map $\eta_3:\Cc_3\to X_3$ for a certain combinatorially defined continuum $X_3$;  %the map
$\eta_3$ collapses $\ol\phd_3$ to a point and sends each
of the subsets similar to maximal baby Mandelbrot sets to distinct points.
%collapses to distinct points subsets of $\Cc_3$ similar to maximal baby Mandelbrot sets disjoint from $\ol{\phd_2}$,
%and .
The map $\eta_3$ solves the problem of finding a map with these special properties.
Since we consider cubic polynomials, %of higher degrees,
we modify some concepts heuristically introduced above in the quadratic case.

Let us now describe our main results.
Denote a dynamical external ray of a polynomial $f$ of argument $\al$ by $R_f(\al)$
(we call dynamical external rays simply \emph{external rays}). An external ray $R_f(\al)$
has \emph{impression} $I(R_f(\al))=I_f(\al)$ (see, e.g., \cite{nad92}) which is a subcontinuum
of $J_f$. It reflects how points of external rays with arguments close to $\al$
accumulate in $J_f$ (we always assume that $J_f$ is connected). %It turns out that
Using impressions
one can define a %certain
special equivalence relation $\sim_f$ on the closed unit disk $\cdisk$ so that
the quotient space $\cdisk/\sim_f$ is a locally connected monotone model of the filled Julia set $K_f$.
Basically, $\cdisk/\sim_f$ is a generalization of Douady's pinched disk model \cite{dh82,hubbdoua85}.
Notice also that the concept of a T-class introduced below is related to the concept of
a \emph{renormalization domain} \cite{IK12}.

\begin{dfn}[\cite{bco13, kiwi97}] \label{d:fullami} Write $\al \sim_f \be$
if $I_f(\al)\cap I_f(\be)\ne \0$. Extend $\sim_f$ by transitivity and pass to the closure of it.
Then extend $\sim_f$ over $\cdisk$
to the smallest closed equivalence relation $\approx_f$ containing $\sim_f$ with the property that all classes are convex.
Equivalently, let $\approx_f$ be the intersection of all closed equivalence relations that satisfy: if $x\sim_f y$, then $x \approx_f y$ and all classes are convex.
For simplicity, keep the notation $\sim_f$ for this new
relation $\approx_f$ on $\cdisk$ and call it the \emph{full laminational equivalence relation of $f$}.
A geometric interpretation of $\sim_f$ is given by the %union
collection $\lam_f$ of edges of convex hulls of $\sim_f$-classes
called the \emph{lamination of $f$}, and these edges are
called \emph{leaves} of $\lam_f$. The set $\cdisk$ with $\lam_f$ is a visual counterpart of $\sim_f$.
\end{dfn}

Laminations and laminational equivalence relations give a combinatorial description of complex polynomials
from the connectedness loci. Thus, to describe $\Cc_d$ one can describe the space of laminations of degree $d$ and use
this space as a model of $\Cc_d$. Thurston successfully used this approach in \cite{thu85}. We
adjust it as we plan to collapse all sets of polynomials similar to baby
Mandelbrot sets, and need to describe such sets. This is done below.

However first we need to consider two extreme cases
which produce the same picture. First, set $f(z)=z^3$; then
all $\sim_f$-classes are points and $\cdisk$ with $\lam_f$ is the closed unit disk without leaves.
The second case is when the entire unit circle forms one $\sim_f$-class. Then $\cdisk$ with
$\lam_f$ is the closed unit disk without leaves (even though here $\cdisk$ is the convex hull
of one $\sim_f$-class). In what follows these two laminational equivalence relations
are called \emph{trivial}.

\begin{dfn}[T-classes]\label{d:tune}
The \emph{trivial T-class} consists of all polynomials $P\in \Cc_d$ with trivial $\sim_P$.
Let $f, g\in \Cc_d$ generate non-trivial $\sim_f$ and $\sim_g$.
If $\al \sim_f \be$ always implies $\al \sim_g \be$ (i.e., if any $\sim_f$-class is contained in a $\sim_g$-class),
then $g$ is said to \emph{tune} $f$. If one of two polynomials
tunes the other one, they are said to be \emph{tuning related}.
Extending this relation by transitivity
we talk about \emph{T-classes}. Thus, two polynomials $P, Q$ belong to the same T-class if
there exists a finite chain of polynomials $f_1=P,$ $f_2,$ $\dots,$ $f_n=Q$ such that $f_i$ and $f_{i+1}$ are
tuning related. The same terminology will be used for laminational equivalence relations $\sim$ and their laminations $\lam_\sim$
so that we can talk about \emph{T-classes of laminational equivalence relations} and \emph{T-classes of laminations}.
\end{dfn}

Now we define maps similar to the map $\eta_2$ from the Introduction.

\begin{dfn}\label{d:mcd}
A map $\phi:\Cc_d\to Y$
is \emph{T-stable} if $\phi(f)=\phi(g)$ whenever $f$ and $g$ belong to the same T-class,
and all polynomials with a fixed non-repelling point belong to the same point preimage of $\phi$.
\end{dfn}

By Definition \ref{d:mcd}, any T-stable map
must collapse the union of T-classes of all polynomials with a
non-repelling fixed point to one point. Notice that by Theorem \ref{t:central-f}
this union equals the union of T-classes of all polynomials
with a neutral fixed point (a priori the latter union can be smaller).

In this paper we solve the following problem in the cubic case $d=3$..

\begin{thmP}
Describe a T-stable map $\tpi_d: \Cc_d\to X_d$ such that any other T-stable map $\phi$
is the composition of $\tpi_d$ and a map from $X_d$ to $\phi(\Cc_d)$. Give a combinatorial
(i.e., independent of polynomials) description of $X_d$.
\end{thmP}

Our Main Theorem solves this problem.
However first we discuss the quadratic case.
%\subsection{The quadratic case}
Based upon the structure of $\M$ uncovered by Douady-Hubbard and Thurston
\cite{dh82,hubbdoua85,thu85}, it is easy to see that the map $\eta_2$ described above %in the Introduction,
 solves
the Main Problem for $d=2$. Indeed, all maximal baby Mandelbrot sets must
collapse to points as all polynomials from them tune the corresponding root polynomial. Also,
the \emph{central fiber} of $\eta_2$, defined as the point preimage of $\eta_2$ containing $z\mapsto z^2$,
is the union of $\ol\phd_2$ and the maximal Mandelbrot sets non-disjoint from $\ol\phd_2$.
Evidently, $\eta_2$ is the finest (the least collapsing) among all maps with the properties from the Main Problem,
and any map $\phi$ satisfying the properties from the Main Problem can be represented
as the composition of $\eta_2$ with a map from $X_2$ to $\phi(\M)$. Thus, $\eta_2=\tpi_2$.

\def\Q{\mathbb{Q}}
\def\p{\mathsf{p}}
\def\Xc{\mathcal{X}}

%\subsection{Main results}\label{ss:main}
 %describe
In the statement of our main results, we use the following notation and terminology.
%introduced in the next section.
Say that a chord $\ol{ab}$ of $\cdisk$ is \emph{($\si_3$-)critical} if $\si_3(a)=\si_3(b)$.
To describe a \emph{combinatorial} model $X_3$ of $\Cc_3$, we use
the space $\crp_3$ of \emph{cubic critical portraits} whose elements are unordered pairs
of distinct \emph{compatible} (not intersecting inside $\disk$) critical chords
(Section \ref{s:fiballi}). This is natural: polynomials are associated
to their critical portraits because critical portraits are a combinatorial counterpart of critical points of polynomials, and
behavior of critical points essentially defines the dynamics of the entire polynomial. Evidently, $\crp_3$ is a metrizable
compact space; in fact it is well-known that $\crp_3$ is topologically the M\"obius band (see, e.g., \cite{tby20}).
For a family $\mathcal T$ of
polynomials let $\crp(\mathcal T)$, the \emph{set of critical portraits of $\mathcal T$},
be the family of critical portraits compatible with
$\lam_P$ for at least one polynomial $P\in \mathcal T$ such that $\lam_P$ is non-trivial.

Let us now define a map $\eta_3$.
Let $\ff_3\subset \Cc_3$ be the union of T-classes of cubic polynomials with a non-repelling fixed point.
Since $z\mapsto z^3$ has a super-attracting fixed point $0$, the entire trivial T-class is contained in $\ff_3$.
For every $P\in \ff_3$, set $\eta_3(P)=\crp(\ff_3)$.
Now, let $P\in \Cc_3\sm \ff_3$ be such that $\lam_P$ is minimal (by definition of $\ff_3$,
the lamination $\lam_P$ is not trivial). Consider the family $\Fc_P$ of all polynomials that tune $P$;
if $f\in \Fc_P$, set $\eta_3(f)=\crp(\Fc_P)$.

\begin{thmM}
The map $\eta_3$ is well-defined and continuous. The point preimage of $\eta_3$
containing $z\mapsto z^3$ is $\ff_3$ and coincides with the union of T-classes of cubic polynomials with a non-repelling
(equivalently, neutral) fixed point.
All other point preimages of $\eta_3$ are connected. Any T-stable map $\phi:\Cc_3\to \phi(\Cc_3)$
is the composition of $\eta_3$ and a map from $\eta_3(\Cc_3)$ to $\phi(\Cc_3)$, so that $\eta_3$ is the map $\tpi_3$ from the Main Problem.
\end{thmM}

In the quadratic case, the point preimages of $\eta_2$ not-containing $z\mapsto z^2$
are maximal baby Mandelbrot sets disjoint from $\ol{\phd_2}$, or just non-renormalizable maps; observe that they are pairwise disjoint.
In the cubic case, point preimages of $\eta_3$ are more complicated. Still, by the Main Theorem,
all point preimages distinct from $\ff_3$ are connected (we do not know whether $\ff_3$ is connected). This result
(that can be qualified as the property of \emph{almost monotonicity} of $\eta_3$) is based upon a series of developments which
we now briefly describe. %Historically,

Let $\la(f)$ be the \emph{rational lamination} of $f\in\Cc_d$, that is, the equivalence relation on $\Q/\Z$
 consisting of pairs of angles such that the corresponding external rays of $f$ land at the same point.
For $f_0\in\Cc_d$ such that $\la(f_0)$ is nontrivial, the \emph{combinatorial renormalization domain}
$\Cc(f_0)$ is defined in \cite{IK12} as $\{f\in\Cc_d\mid \lambda(f)\supset
\lambda(f_0)\}$.
Thus, $\Cc(f_0)$ is the family of all polynomials $f$ that ``tune'' $f_0$ in a combinatorial sense.
Every point preimage of $\eta_3$ not containing $z\mapsto z^3$ is of the form $\Cc(f_0)$ for some
 \emph{primitive} $f_0$ (the closures of all bounded Fatou domains of $f_0$ are disjoint).
The most interesting case is when $f_0$ is hyperbolic.
%Then Julia sets of $f\in \Cc(f_0)$ are obtained
%by consistent pinching of Fatou domains of $f$. In the quadratic case the sets $\Cc(f_0)$ are exactly
%baby Mandelbrot sets and are all homeomorphic to $\M$. In the cubic case
%various domains $\Cc(f_0)$ can be modeled not by one but by several standard polynomial parameter spaces.
For a primitive hyperbolic $f_0$, the connectedness of $\Cc(f_0)$
 follows from \cite{shen2020primitive}; for special types of hyperbolic components,
 this was proved earlier in \cite{IK12}.
%Also, there is a natural ``model space'' for $\Cc(f_0)$ (the latter, unfortunately, fails to be continuous, see \cite{Ino09}).

Suppose now that $f_0$ is not hyperbolic and does not
have parabolic cycles.
In this case, at least one critical point of $f_0$ belongs to $J_{f_0}$.
Assume additionally that $f_0$ does not have neutral cycles and, moreover, that the critical points of $f_0$ in $J_{f_0}$ are not renormalizable
 (this is equivalent to saying that $\Cc(f_0)$ is not a proper subset of $\Cc(P_0)$
   for a polynomial $P_0\in\Cc_3$ with non-trivial $\lambda(P_0)$).
If $f_0$ is primitive and has a cycle of attracting basins, then $\Cc(f_0)$ is homeomorphic to $\M$ by \cite{wang2021primitive}.
Finally, if (under all other assumptions made above) $f_0$ has no attracting cycles at all,
 then $\Cc(f_0)$ is a singleton by \cite{KozlovskivanStrien2009}.
Together, these results imply that $\eta_3$ is almost monotone; see Section \ref{s:conn-fib} for details.
%Thus, in the primitive non-renormalizable case models are bijective and connected.
%In all these cases, the spaces $\Cc(P)$ are also known to be connected.
%The non-primitive (so called \emph{satellite}) case remains open.
%The relevant papers are \cite{Ino09, mil92,IK12, mil12, shen2020primitive, KozlovskivanStrien2009,wang2021primitive}.

\begin{remark}
The Main Theorem allows partial generalizations to arbitrary degrees.
However, as the degree grows, the corresponding statements become less satisfactory.
Also, the cubic case allows for a number of simplifications, and the results required for
 the almost monotonicity of $\eta_3$ are fully available only for $d\le 3$.
\end{remark}

%Evidently, $X_d$ can be viewed as a model of $\Cc_d$.
%The authors are not aware of other known models of $\Cc_3$.

\noindent\textbf{Acknowledgements.} 
This paper is dedicated to the memory of Yuri Ilyich Lyubich.

The authors are grateful to the referees for detailed and thoughtful comments and suggestions.

\section{Laminations}\label{s:criplam}
We parameterize external
rays of a polynomial $f\in\poly_d$ by \emph{angles}, i.e., elements of $\R/\Z$.
The  external ray of argument $\theta\in\R/\Z$ is denoted by $R_f(\theta)$.
Clearly, $f$ maps $R_f(\theta)$ to $R_f(d\theta)$.

A \emph{chord} $\ol{ab}$ is a closed segment connecting points $a$, $b$
of the unit circle $\uc=\{z\in\C\,|\,|z|=1\}$ parameterized by their arguments %, often represented by their arguments
(thus, we may write $\ol{\frac 13\frac 23}$ meaning the chord connecting the points $e^{2\pi i/3}$ and $e^{4\pi i/3}$).
If $a=b$, then $\ol{ab}$ is a \emph{degenerate} chord.
\emph{Distinct} chords \emph{cross} if they intersect in $\disk$
(alternatively, they are called
\emph{linked}). Chords that do not cross are said to be \emph{unlinked}.
Sets of chords are \emph{compatible} if chords from distinct sets do not cross.
Write $\si_d$ for the self-map of $\uc$ that takes $z$ to $z^d$. A
chord $\ol{ab}$ is ($\si_d$-) \emph{critical} if $\si_d(a)=\si_d(b)$.

\subsection{Laminational equivalence relations}\label{ss:lclam}

For $f\in\Cc_d$ with locally connected $J_f$, let  $\psi(e^{2\pi i\theta})$ be
the landing point of $R_f(\theta)$;  then $\psi:\uc\to J_f$ is a semi-conjugacy
between $\si_d:\uc\to\uc$ and $f:J_f\to J_f$
called the \emph{Caratheodory loop}.
Define an equivalence relation $\sim_f$ on $\uc$ as follows: $x \sim_f y$ if and only if $\psi(x)=\psi(y)$
 and call $\sim_f$ the \emph{laminational equivalence relation (generated by $f$)}.
The relation $\sim_f$ is $\si_d$-invariant; $\sim_f$-classes have pairwise disjoint convex hulls.
The quotient space $\uc/\sim_f=J_{\sim_f}$ is called a \emph{topological Julia set}.
Clearly, $J_{\sim_f}$ is homeomorphic to $J_f$.
The map $f_{\sim_f}: J_{\sim_f}\rightarrow J_{\sim_f}$, induced by $\sigma_d$ and
called a \emph{topological polynomial}, is topologically conjugate to $f|_{J_f}$.

Equivalence relations analogous to $\sim_f$ can be introduced
with no reference to polynomials \cite{bl02}. Let $\sim$ be an
equivalence relation on $\uc$. Equivalence classes of $\sim$ will
be called \emph{($\sim$-)classes}. Also, given a closed set $A\subset\C$, let $\ch(A)$ denote
the convex hull of the set $A$ in $\C$.
%and will be denoted by boldface letters. A
%$\sim$-class consisting of two points is called a \emph{leaf}; a class
%consisting of at least three points is called a \emph{gap} (this is
%more restrictive than Thurston's definition in \cite{thur85}; for the
%moment we follow \cite{bl02} in our presentation).
%Fix an integer $d>1$.

\begin{dfn}\label{d:lameq}
An equivalence relation $\sim$ is a \emph{($\si_d$-)invariant
laminational equivalence relation} if it is:

\begin{enumerate}
%\noindent
\item[(E1)] %$\sim$ is
\emph{closed}: the graph of $\sim$ is a closed
set in $\ucirc \times \ucirc$;

%\noindent
\item[(E2)] %$\sim$ %defines a \emph{lamination}, i.e., it is
\emph{unlinked}: if $\g_1$ and $\g_2$ are distinct $\sim$-classes,
then their convex hulls $\ch(\g_1), \ch(\g_2)$ in the unit disk $\bbd$
are disjoint;

%\noindent
\item[(E3)] \emph{finite}: all $\sim$-classes are finite;

%\noindent
\item[(D1)] %$\sim$ is
\emph{forward invariant}: for a class $\g$,
the set $\si_d(\g)$ is a class too;

%\noindent which implies that

%\noindent
\item[(D2)] %$\sim$ is
\emph{backward invariant}: for a class $\g$,
its preimage $\si_d^{-1}(\g)=\{x\in \ucirc: \si_d(x)\in \g\}$ is a
union of classes;

%\noindent
\item[(D3)] \emph{orientation preserving}: for any $\sim$-class $\g$ with more than two points, the
map $\si_d|_{\g}: \g\to \si_d(\g)$ is a \emph{covering map with
positive orientation}, i.e., for every connected component $(s, t)$ of
$\ucirc\setminus \g$ the arc in the circle $(\si_d(s), \si_d(t))$ is a
connected component of $\ucirc\setminus \si_d(\g)$.
\end{enumerate}

Here, conditions (E1) -- (E3) use only an \textbf{E}quivalence relation, while
conditions (D1) -- (D3) deal also with the \textbf{D}ynamics of $\si_d$.
Note that (D1) implies (D2).
If $\sim$ has all the properties from above except (E3) (i.e., some $\sim$-classes
can be infinite), then $\sim$ is called a \emph{($\si_d$-)invariant
laminational$^\infty$ equivalence relation}.
\end{dfn}

Invariant laminational$^\infty$ equivalence relations have visual counterparts.

\begin{dfn}\label{d:q}
For a laminational$^\infty$ equivalence relation
$\sim$, con\-sider the family of all edges of convex hulls of $\sim$-classes.
This set of chords, together with $\uc$, is called the \emph{(invariant) q-lamination (generated by $\sim$)} and is
denoted by $\lam_\sim$; chords in $\lam_\sim$ are said to be \emph{leaves} of $\lam_\sim$.
\end{dfn}

Note that q-laminations generated by invariant laminational$^\infty$ equivalence relations and
q-laminations generated by invariant laminational equivalence relations (i.e., without infinite classes)
form the same class of sets. Indeed, let $\lam_\sim$ be
generated by an invariant laminational$^\infty$ equivalence relation $\sim$. It is easy to see that
infinite classes of $\sim$, if any, are Cantor sets. Hence we can declare a new equivalence $\hat\sim$ which
keeps all finite classes of $\sim$ and breaks all infinite $\sim$-classes into classes as follows: points
$x, y$ from an infinite $\sim$-class $A$ are $\hat\sim$-equivalent if they are connected with %an edge
a finite concatenation of edges of the convex hull of $A$.
Then it is easy to check that $\hat\sim$ is an invariant laminational equivalence without infinite classes.
However by construction $\lam_{\hat\sim}=\lam_\sim$. This is why we use the term q-laminations without $^\infty$.

\subsection{General properties of laminations}

Thurston \cite{thu85} defined \emph{invariant laminations} as families of chords
with dynamical properties resembling those of $\lam_f$ but without invoking polynomials.

\begin{dfn}[Laminations]\label{d:geolam}
A \emph{prelamination} is a family $\lam$ of chords cal\-led
\emph{leaves} such that distinct leaves are unlinked and all points of
$\uc$ are leaves. If the set
$\bigcup \lam=\bigcup_{\ell\in\lam}\ell$ is compact, then $\lam$ is called a
\emph{lamination}. Two (pre)laminations are \emph{compatible} if their leaves
do not cross (thus, the union of two compatible (pre)laminations is a (pre)lamination).
\end{dfn}

To clarify: we consider all points of $\uc$, including the endpoints of non-degenerate leaves,
as leaves. In considering compatible laminations, it suffices to consider only their non-degenerate
leaves because degenerate leaves (i.e., points of $\uc$) cannot cross (by definition).

From now on, $\lam$ denotes a lamination.

\begin{dfn}[Gaps and edges]\label{d:gaps} \emph{Gaps} of $\lam$ are the closures of
components of $\disk\sm\bigcup \lam$. A gap $G$ is \emph{countable $($finite,
uncountable$)$} if $G\cap\uc$ is countable infinite (finite, uncountable).
Uncountable gaps are called \emph{Fatou} gaps.
For a closed set $H\subset \C$, \emph{edges} of $H$ are maximal  straight segments in $\bd(H)$.
\end{dfn}

Convergence of (pre)laminations $\lam_i$ to a set of chords $\mathcal E$
is understood as convergence in the Hausdorff metric of leaves of $\lam_i$ to chords from $\mathcal E$;
evidently, $\mathcal E$ is a prelamination. A lamination $\lam$ is \emph{nonempty} if it
has nondegenerate leaves and \emph{empty} otherwise
(the empty lamination is denoted by $\lam_\0$; note that it is not the empty set
as it contains all points of $\uc$). Say that $\lam$ is \emph{countable}
if it has countably many nondegenerate leaves and \emph{uncountable}
otherwise; $\lam$ is \emph{perfect} if it has no isolated (in the sense of Hausdorff metric) leaves.

In what follows we use a
bit different (compared to \cite{thu85}) approach \cite{bmov13}, largely borrowing terminology (and inspiration) from \cite{thu85}.
If $G\subset\cdisk$ is the convex hull of $G\cap\uc$, define $\si_d(G)$
as the convex hull of $\si_d(G\cap\uc)$. A \emph{sibling of a leaf $\ell\in \lam$} is a
leaf $\ell'\in \lam$ different from $\ell$ with $\si_d(\ell')=\si_d(\ell)$. Call a leaf
$\ell^*$ such that $\si_d(\ell^*)=\ell$ a \emph{pullback} of $\ell$.
%We also denote this extended map by $\si_d$.

\begin{dfn} \cite[Definition 3.1]{bmov13}\label{d:sibli}
A (pre)lamination $\lam$ is \emph{sibling ($\si_d$)-in\-va\-ri\-ant} if
\begin{enumerate}
  \item for each $\ell\in\lam$, we have $\si_d(\ell)\in\lam$,
  \item for each $\ell\in\lam$ there exists $\ell^*\in\lam$ with $\si_d(\ell^*)=\ell$,
  \item for each non-critical $\ell\in\lam$ %such that $\si_d(\ell)$ is a nondegenerate leaf,
  there exist $d$ \textbf{pairwise disjoint} leaves $\ell_1$, $\dots$, $\ell_d$ in $\lam$ such that
  $\ell_1=\ell$ and $\si_d(\ell_1)=\dots=\si_d(\ell_d)$.
\end{enumerate}
\end{dfn}

Leaves from (3) above form \emph{full sibling
collections}. Their elements cannot intersect even on $\uc$. Here is
a useful property of such collections.

\begin{lem}\label{l:clos-sibl}
The following properties hold.

\begin{enumerate}

\item Let $\ell_1, \dots, \ell_d$ be the limit of a sequence of full sibling collections.
%and
If $\ell_1$ is not critical, then $\ell_1, \dots, \ell_d$ is a full sibling collection.

\item The family of all non-isolated leaves of a sibling invariant lamination is a
sibling invariant lamination.

\end{enumerate}

\end{lem}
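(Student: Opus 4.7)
The plan is to prove (1) by a continuity argument exploiting the fact that a non-critical chord has exactly $d$ distinct pairwise disjoint chord-preimages under $\si_d$, and then to deduce (2) by applying (1) to sibling collections at leaves approaching a given non-isolated leaf.

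For (1), the linear extension of $\si_d$ to chords is continuous, so passing to the limit in $\si_d(\ell_1^{(n)})=\dots=\si_d(\ell_d^{(n)})$ yields $\si_d(\ell_i)=\si_d(\ell_1)$ for each $i$. The non-critical assumption on $\ell_1$ enters twice. First, the set of non-critical chords is open, so $\ell_1^{(n)}$ is non-critical for $n$ large, which forces the $d$ pairwise disjoint leaves $\ell_1^{(n)},\dots,\ell_d^{(n)}$ to exhaust the $d$ chord-preimages of $\si_d(\ell_1^{(n)})$. Second, the $d$ chord-preimages of a non-critical chord depend continuously on the chord, so, up to relabeling along a subsequence, the limits $\ell_1,\dots,\ell_d$ are precisely the $d$ pairwise disjoint chord-preimages of $\si_d(\ell_1)$, hence a full sibling collection.

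For (2), let $\lam'$ denote the set of non-isolated leaves of $\lam$. Points of $\uc$ are never isolated (nearby points of $\uc$ are themselves leaves), so $\uc\subset\lam'$; and a Hausdorff limit of leaves in $\lam'$ again lies in $\lam'$ (either the approximating sequence contains distinct leaves of $\lam$ near the limit, making the limit non-isolated, or it stabilizes at the limit, which then already belongs to $\lam'$). Hence $(\lam')^+$ is closed and $\lam'$ is a lamination. To check sibling invariance at $\ell\in\lam'$, fix distinct leaves $\ell_n\to\ell$ in $\lam$. If $\ell$ is critical or degenerate, then $\si_d(\ell)\in\uc\subset\lam'$; otherwise at most $d-1$ of the $\ell_n$ lie in the $d$-element set $\si_d^{-1}(\si_d(\ell))$, so a subsequence satisfies $\si_d(\ell_n)\to\si_d(\ell)$ with $\si_d(\ell_n)\ne\si_d(\ell)$, verifying Definition \ref{d:sibli}(1). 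For Definition \ref{d:sibli}(2), pick any pullbacks $\ell_n^*\in\lam$ of $\ell_n$ and pass to $\ell_n^*\to\ell^*$; if $\ell_n^*=\ell^*$ eventually, then $\ell_n=\si_d(\ell^*)$ would stabilize, contradicting $\ell_n\ne\ell$, so $\ell^*\in\lam'$.

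For Definition \ref{d:sibli}(3) at a non-critical $\ell\in\lam'$, openness of non-critical chords makes all $\ell_n$ non-critical, so each carries a full sibling collection $\{\ell_n^{(1)}=\ell_n,\dots,\ell_n^{(d)}\}$ in $\lam$; by compactness and part (1), a diagonal subsequence Hausdorff-converges to a full sibling collection $\{\ell_1=\ell,\dots,\ell_d\}$ at $\ell$. If $\ell_n^{(i)}=\ell_i$ held eventually for some $i$, then $\si_d(\ell_n)=\si_d(\ell_n^{(i)})$ would stabilize at $\si_d(\ell)$, trapping the distinct $\ell_n$ inside the $d$-element set $\si_d^{-1}(\si_d(\ell))$, a contradiction. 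Hence each $\ell_i$ is a Hausdorff limit of distinct leaves of $\lam$ and so lies in $\lam'$. The principal delicacy throughout is ruling out collapse of the limiting sibling collection, and in both parts this is controlled by the non-critical hypothesis combined with the finiteness of $\si_d$-fibers on chords.
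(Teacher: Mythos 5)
Your overall route is the same as the paper's: prove (1) by a continuity argument showing the limiting collection cannot collapse, then obtain (2) by passing to limits of sibling collections along leaves approaching a non-isolated leaf and invoking (1); your treatment of (2) is in fact more explicit than the paper's about why the limit pullback and the limit siblings are themselves non-isolated. However, the justification you give for (1) rests on a false auxiliary claim: a non-critical chord does \emph{not} have a unique collection of $d$ pairwise disjoint chord-preimages. Already for $d=2$, if $m=\ol{pq}$ is non-degenerate with $\si_2^{-1}(p)=\{p_1,p_2\}$ and $\si_2^{-1}(q)=\{q_1,q_2\}$ in cyclic order $p_1,q_1,p_2,q_2$, then both $\{\ol{p_1q_1},\ol{p_2q_2}\}$ and $\{\ol{p_1q_2},\ol{p_2q_1}\}$ are collections of two pairwise disjoint chords mapping onto $m$ (and the set of \emph{all} chord-preimages of $m$ has $d^2$ elements, not $d$). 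So the sentences ``exhaust the $d$ chord-preimages'' and ``are precisely the $d$ pairwise disjoint chord-preimages of $\si_d(\ell_1)$'' do not make sense as stated, and one cannot argue via continuous dependence of ``the'' preimage collection.

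The repair is easy and stays within your framework: what varies continuously and stays separated is the set of $2d$ \emph{endpoint} preimages. Since $\ell_1^{(n)}$ is non-critical for large $n$, its image $\ol{p_nq_n}$ is non-degenerate, each $\ell_i^{(n)}$ joins a point of $\si_d^{-1}(p_n)$ to a point of $\si_d^{-1}(q_n)$, and pairwise disjointness forces the collection to use each of these $2d$ distinct points exactly once. As $n\to\infty$ these $2d$ points converge to the $2d$ distinct points of $\si_d^{-1}(p)\cup\si_d^{-1}(q)$ (distinct because $\ell_1$ is non-critical, so $p\ne q$), whence the limit chords $\ell_1,\dots,\ell_d$ have pairwise disjoint endpoint sets; being also pairwise unlinked as limits of unlinked chords and having the common image $\ol{pq}$, they form a full sibling collection. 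This is morally the same separation-of-endpoints argument the paper uses (the paper rules out a shared endpoint $b$ by noting that two nearby preimages $b'$, $b''$ of the same point cannot both be close to $b$). A second, harmless slip of the same kind occurs in your part (2): the set of chords with the same image as a non-critical $\ell$ has at most $d^2$ elements, not $d$; finiteness is all your argument needs there.
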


Observe that if a leaf $\hell$ is not critical, then all leaves in a full sibling collection containing
$\hell$ are not critical.

\begin{proof}
(1) We claim that $\ell_1$ and $\ell_2$ are disjoint.
If $\ell_1=\ol{ab}$ and $\ell_2=\ol{bc}$, then $\si_d(a)=\si_d(c)\ne \si_d(b)$.
A full sibling collection approximating the given one has a pair of leaves $\ol{a'b'}$ and $\ol{b''c'}$ with
 $b'$, $b''$ close to $b$ and $\si_d(\ol{a'b'})=\si_d(\ol{b''c'})$, a contradiction.

(2) All non-isolated leaves in $\lam$ form a forward invariant closed
family of leaves. If $\ell$ is non-isolated, choose a sequence
of leaves $\oq_i\to \oq$ with $\si_d(\oq_i)\to \ell$ so that $\si_d(\oq)=\ell$.
Now, let $\ell$ be non-isolated and non-critical.
Choose $\ell_i\to \ell$ so that $\ell_i$'s belong to their full sibling collections.
We may assume that these collections of leaves converge; by (1) they converge to a
full sibling collection that includes $\ell$. This completes the proof.
\end{proof}

Observe that by \cite[Lemma 3.1]{bmov13} any q-lamination is sibling invariant.
Moreover, by \cite[Theorem 3.2]{bmov13} sibling invariant laminations are
invariant in the sense of Thurston \cite{thu85}. Here is another useful fact about sibling-invariant laminations.

\begin{cor}\cite[Corollary 3.7]{bmov13}\label{c:order}
Let $\ol{xy}$ and $\ol{xz}$ be non-critical leaves of a sibling invariant lamination $\lam$. Then the circular orientation
of the points $x,$ $y,$ and $z$ is the same as that of their images.
\end{cor}

\begin{comment}

The next lemma is proven in \cite{bmov13}.

\begin{lem}[ Lemma 3.1 of \cite{bmov13}]
\label{l:q-lam}
Any q-lamination is sibling invariant.
\end{lem}

Sibling invariant laminations have the following properties.

\begin{thm}[ Theorem 3.2 of \cite{bmov13}]
\label{t:gapin}
If $G$ is a gap of $\lam$, then $H=\si_d(G)$ is a leaf of $\lam$ (possibly degenerate), or a gap of
$\lam$, and, in the latter case, the map $\si_d|_{\bd(G)}:\bd(G)\to \bd(H)$ is an orientation
preserving composition of a monotone map and a covering map.
\end{thm}

Gap invariance is a part of \cite{thu85}, %Thurston's original definition,
thus, by Theorem \ref{t:gapin},
%Theorem 3.2 of \cite{bmov13},
sibling invariant laminations are invariant in the sense of Thurston \cite{thu85}.

\end{comment}

A motivation for introducing sibling invariant laminations was that it is easier to deal
with leaves and their sibling collections than with gaps. As a consequence, studying families of
laminations became more transparent. In particular, the following theorem holds (recall that
we always equip spaces of compact sets with the Hausdorff distance topology).

\begin{thm} \cite[Corollary 3.20 and Theorem 3.21]{bmov13}
%\begin{thm}\cite[Corollary 3.20 and Theorem 3.21]{bmov13}
\label{t:laclo}
The closure of a sibling invariant prelamination is a sibling invariant lamination.
The space of all sibling invariant laminations of degree $d$ is compact.
\end{thm}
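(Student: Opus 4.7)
The plan is to derive both (1) and (2) from a single, slightly more uniform statement that is already listed before the theorem as the ``compactness'' property of sibling invariant laminations: the Hausdorff limit, in the hyperspace of closed chord families, of any sequence of sibling invariant prelaminations is a sibling invariant lamination. Part (1) then follows by applying this to the constant sequence $\lam,\lam,\lam,\dots$, whose limit in the hyperspace is precisely the closure $\ol\lam$. Part (2) follows because the space of chords is naturally a compact metrizable space (a quotient of $\uc\times\uc$), so its hyperspace of closed subsets is compact; the limit statement says $\ssl_d$ is closed in that hyperspace, hence compact.

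To carry out the main limit argument, I let $\lam_n$ be sibling invariant prelaminations converging in Hausdorff topology to a set of chords $\lam$, and verify the hypotheses of Definitions \ref{d:geolam} and \ref{d:sibli} in order. The lamination conditions are easy: all of $\uc$ is present in each $\lam_n$ and hence in the limit; the union $\lam^+$ is closed by construction; and if two limit chords crossed strictly in $\disk$, then for large $n$ approximating chords in $\lam_n$ would also cross, contradicting the assumption that $\lam_n$ is a prelamination. For sibling invariance, I pick $\ell\in\lam$ and approximate it by $\ell_n\in\lam_n$. Forward invariance follows from continuity of $\si_d$ on chords, since $\si_d(\ell_n)\in\lam_n$ and $\si_d(\ell_n)\to\si_d(\ell)$. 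The existence of a pullback is obtained by choosing $\ell_n^*\in\lam_n$ with $\si_d(\ell_n^*)=\ell_n$ and extracting a Hausdorff-convergent subsequence; the limit $\ell^*\in\lam$ then satisfies $\si_d(\ell^*)=\ell$. For the sibling-collection axiom at a non-critical $\ell\in\lam$, observe that $\ell_n$ is non-critical for all large $n$, choose full sibling collections $\{\ell_{n,1}=\ell_n,\dots,\ell_{n,d}\}\subset\lam_n$, and pass to a subsequence along which all $d$ chords converge coordinatewise. By Lemma \ref{l:clos-sibl}(1), the limit is a full sibling collection in $\lam$ containing~$\ell$.

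The step I expect to require the most care is the sibling-collection axiom in the limit. A priori, two of the limit chords could share an endpoint, which would violate the pairwise disjointness required in Definition \ref{d:sibli}(3). Lemma \ref{l:clos-sibl}(1) is precisely designed to exclude this when the limit leaf is non-critical, using that $\si_d$ is a local homeomorphism of $\uc$ and that a full sibling collection over a non-critical image uses $d$ distinct preimages of each image endpoint. Everything else in the argument reduces to routine facts: continuity of $\si_d$, compactness of the hyperspace of chord families, and the elementary observation that Hausdorff limits of unlinked chords are unlinked. Once the main limit statement is in hand, (1) and (2) follow formally as indicated in the first paragraph.
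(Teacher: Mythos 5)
Your argument is correct. Note that the paper itself gives no proof of this statement: it is quoted verbatim from \cite{bmov13} (Corollary 3.20 and Theorem 3.21 there), and the ``compactness'' bullet point you start from is likewise listed as an imported property rather than proved. What you have written is essentially the standard argument behind those citations: reduce everything to the single statement that a Hausdorff limit of sibling invariant prelaminations is a sibling invariant lamination, verify the prelamination axioms by openness of crossing and continuity of $\si_d$, get axiom (2) by extracting a convergent subsequence of pullbacks, and get axiom (3) -- the only delicate point, since two limit siblings could a priori share an endpoint -- from Lemma \ref{l:clos-sibl}(1), which the paper does prove independently, so there is no circularity. The deduction of the two claims of the theorem from the limit statement (constant sequence for the closure claim; closedness in the compact hyperspace of the chord space for compactness of $\ssl_d$) is also sound.
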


From now on, by ``invariant'' laminations we mean ``sibling invariant'' laminations,
and, unless stated otherwise, all laminations are $\si_d$-invariant for some $d\ge 2$
(sometimes, but not always, we emphasize this fact).

\begin{dfn}[Space of laminations]\label{d:spalam} The space of all invariant laminations
of degree $d$ is denoted by $\ssl_d$.
By Theorem \ref{t:laclo}, the metric space $\ssl_d$ is compact.
\end{dfn}

Since (pre)laminations are collections of chords, the concept of compatibility applies to them, and
we can talk about compatible (pre)laminations. Then
a useful fact that follows from Definition \ref{d:sibli} is that
if invariant laminations $\lam, \lam'$ are compatible, then $\lam\cup \lam'$ is an invariant lamination.
This illustrates the convenience of using the concept of sibling invariant laminations.

\section{Gaps and gap-leaves %laps
of arbitrary laminations}\label{s:gaps}

A chord $\ell$ is \emph{inside} a gap $G$ if, except for the
endpoints, $\ell$ is in the interior of $G$; if $\ell\subset G$, say that $\ell$
is \emph{contained in $G$}. A gap $G$ of $\lam$ is
\emph{critical} if all edges of $G$ are critical, or there is a
critical chord \emph{inside} $G$.
A \emph{critical set} of $\lam$ is a critical leaf or a critical gap.
A %\emph{lap}
\emph{gap-leaf} of $\lam$ is either a finite gap of $\lam$ or a nondegenerate leaf of $\lam$
not on the boundary of a finite gap. By the \emph{period} we mean the \emph{minimal} period.
For a chord $\ell=\ol{ab}$, let $|\ell|$ be the length of the
smaller circle arc with endpoints $a$ and $b$ (computed with respect to the
Lebesgue measure on $\uc$ normalized so that the total length of $\uc$ is 1);
call $|\ell|$ the \emph{length} of $\ell$.

\begin{lem}\label{l:compute}
Let $\ell$ be a non-degenerate chord. Then the following holds.

\begin{enumerate}

\item $|\si_d(\ell)|\le d|\ell|$.

\item If $0<|\ell|<\frac{1}{d+1}$, then %$|\si_d(\ell)|=d|\ell|$ or $|\si_d(\ell)|\ge\frac1{d+1}$.
$|\si_d(\ell)|>|\ell|$.

\item The forward $\si_d$-orbit of any chord contains a chord of length $\ge \frac{1}{d+1}$.
The forward $\si_d$-orbit of any non-precritical chord contains infinitely many chords of
length $\ge \frac{1}{d+1}$.

\item In any nonempty $\si_d$-invariant lamination, there are leaves of length $\ge \frac1{d+1}$.

\end{enumerate}

\end{lem}

\begin{proof} Left to the reader. \end{proof}

We will need the following result due to J. Kiwi.

\begin{thm}\cite[Theorem 1.1]{kiw02}\label{t:kiwinf}
A wandering gap of a $\si_d$-invariant lamination has at most $d$ vertices.
In particular, an infinite gap of an invariant lamination is
(pre)periodic.
\end{thm}

Theorem \ref{t:kiwinf} is used in the proof of the next lemma.

\begin{lem}\label{l:edges}
Any edge of an infinite gap is %either
(pre)periodic or (pre)cri\-ti\-cal.
\end{lem}

\begin{proof}
%By Kiwi \cite{kiw02} Marl;buta udigud54%
Let $U$ be an infinite gap of an invariant lamination $\lam$.
By Theorem \ref{t:kiwinf} an eventual image $V$ of $U$ is periodic. Non-degenerate edges of
gaps from the orbit of $V$ form a sequence of chords whose length converges to $0$.
By Lemma \ref{l:compute} the orbit of a non-precritical edge $\ell$ of $U$ has infinitely many
chords of bounded away from $0$ length; by the above this implies that some chords will
be repeated which means that they are periodic, and, hence, $\ell$ is (pre)periodic.
\end{proof}

%It is known that Fatou gaps of $\si_d$-invariant laminations are (pre)periodic
%(\cite[Theorem 1.1]{kiw02} or \cite[Theorem B]{bl02}).
If $U$ is a $\si_d$-periodic Fatou gap of period $n$
and the map $\si_d^n:\bd(U)\to\bd(U)$ has topological degree $k\ge 1$,
then $U$ is called a \emph{periodic gap of degree $k$}.
If $k>1$, then a folklore result %states that the
claims the existence of a monotone map from $\bd(U)$ to $\uc$
collapsing all edges of $U$ and semi-conjugating $\si_d^n|_{\bd(U)}$ with $\si_k$.

The case of infinite gaps of degree one is more delicate. We will need a geometric lemma
proven in \cite[Lemma 3.8]{bmov13}.

\begin{lem}\cite[Lemma 3.8]{bmov13}\label{l:3.8} Let $\ol{xy}$ be a critical leaf
of an invariant lamination $\lam$. If a leaf $\ell=\ol{xa}\ne \ol{xy}$ belongs to $\lam$, then there must exist
$b\in \uc$ such that $\ol{yb}\in \lam$ and points $a$ and $b$ are separated by $\ol{xy}$ in $\cdisk$.
Moreover, $\ol{xa}$ and $\ol{by}$ can be chosen to be sibling leaves.
\end{lem}

We are ready to prove Lemma \ref{l:isolate}.

\begin{lem}\label{l:isolate}
Suppose that $\ell=\ol{xy}$ is a chord such that $x$ is $\si_d$-periodic and $\si_d(x)=\si_d(y)$.
Then $\ell$ is not the limit of leaves of $\si_d$-invariant laminations not equal to $\ell$ and
unlinked with $\ell$. In particular, if $\ell$ is a critical leaf of a $\si_d$-invariant
lamination $\lam$, then $\ell$ is isolated in $\lam$.
%Moreover, $\ell$ cannot be a leaf of a lamination that
%is the limit of laminations none of which contains $\ell$.
\end{lem}

\begin{proof}
%Assume that
We may assume that $x$ is fixed (otherwise we can consider the appropriate power of $\si_d$).
If a leaf $\ell'$ is close to $\ell$, unlinked with $\ell$, and $x\notin \ell'$, then
it is easy to see that $\si_d(\ell')$ crosses $\ell'$, a contradiction. %Now, let
Let a leaf $\ol{xz}$ be very close to $\ol{xy}$; then
the points $\si_d(z)\approx x$ and $z$ are separated in $\cdisk$ by $\ol{xy}$. By Lemma \ref{l:3.8} %\cite[Lemma 3.8]{bmov13},
the leaf $\ol{x\si_d(z)}$ has a sibling leaf $\ol{yt}$ where $t$ and $\si_d(z)$ are separated in $\cdisk$
by $\ol{xy}$ which implies that $\ol{xt}$ crosses $\ol{xz}$, a contradiction (notice that the short circle arc
from $x$ to $\si_d(z)$ is longer than the short circle arc from $y$ to $z$).
\end{proof}

%We will need the following definition.

\begin{dfn}[Major]\label{d:major}
Let $G$ be an invariant gap of a cubic lamination. An edge $M=\ol{ab}$ of $G$ is called a \emph{major (of $G$)} if
the open circle arc with endpoints $a$ and $b$ disjoint from %vertices of
$G$ is of length %greater than or equal to
$\frac13$ or longer.
%We will call the length of this arc the \emph{length of $M$}.
\end{dfn}

In Lemma \ref{l:crit-must}, and in what follows, we often consider the linear extension
of $\si_d$ over leaves of laminations still denoted by $\si_d$. %described before Definition \ref{d:sibli}.
In studying of degree one infinite gaps we will need the following result.

\begin{thm}\cite[Main Theorem]{AK79}\label{t:no-cyc} A self-mapping of the circle without periodic points
is monotonically semiconjugate to an irrational rotation.
\end{thm}

Observe that in \cite{b84, b86} it is proven that a self-mapping of a connected compact one-dimensional branched manifold
(``graph'') without periodic points is monotonically semiconjugate to an irrational rotation, too. Recall that a map
is \emph{monotone} if all  point-preimages (so-called \emph{fibers}) of the map are continua.

\begin{dfn}\cite{bmov13}\label{d:infico}
An infinite collection of leaves coming out of $x$ is called an \emph{infinite cone}.
\end{dfn}

Vertices of infinite cones are studied in the next lemma.

\begin{lem}\cite[Lemma 4.7]{bmov13}\label{l:inficon}
An infinite cone (of a lamination $\lam$) must have a (pre)periodic vertex. Moreover, it consists of countably many leaves.
\end{lem}

\begin{proof}
The first claim is proven in Lemma 4.7 \cite{bmov13}. To prove the second claim observe without loss of generality that
a vertex $v$ of an infinite cone $\Zc$ is fixed. By Corollary \ref{c:order} the circular orientation among points of $\Zc\cap \uc$
is preserved under the action of $\si_3$. Since $\si_3$ is expanding, it follows that leaves in $\Zc$ are either (pre)critical,
or fixed, or eventually mapped to a fixed leaf. Evidently, this implies that $\Zc$ consists of countably many leaves.
\end{proof}

\begin{comment}

More advanced properties of infinite cones were studied later in \cite{bopt15}.

\begin{lem}\cite[Lemmas 2.14-2.15]{bopt15} Let $x$ be a fixed point.
Assume that there are invariant leaves $\ol{xu}$ and $\ol{xv}$ (one of which can be degenerate)
such that there are infinitely many non-invariant but no invariant
leaves (with endpoint $x$) between them. Denote this family
of leaves by $\Zc$ and include $\ol{xu}$ and $\ol{xv}$ in it. Then $\si_d(\Zc)=\Zc$. If
there are no critical leaves with endpoint $x$ in $\Zc$ then all leaves of $\Zc$
but $\ol{xu}$ and $\ol{xv}$ map (under $\si_d$) in one direction.
\end{lem}

\end{comment}

Let us study infinite periodic gaps with the first return map of degree 1.

\begin{lem}\label{l:crit-must}
Suppose that $G$ is a degree one $k$-periodic infinite gap of a $\si_d$-invariant lamination $\lam$ for some $d\ge 2$.
Then some gaps from the orbit of $G$ have critical edges. Moreover,
there are two possibilities.
\begin{enumerate}
\item There is a monotone semi-conjugacy between $\si_d^k|_{\bd(G)}$ and an irrational rotation
 of\, $\uc$ that collapses all edges of $G$ to points;
 moreover, if there are concatenations of edges of $G$, then each concatenation
 consists of at most $d-1$ leaves.
\item There are periodic edges of $G$; for some minimal $q$ all periodic edges of $G$ are
$\si_d^{kq}$-fixed. Moreover, each arc $I\subset \bd(G)$ located between two adjacent $\si_d^{kq}$-fixed
edges of $G$ has the following properties:

\begin{enumerate}

\item $I$ is $\si_d^{kq}$-invariant;

\item at exactly one endpoint, say $x$,
of $I$, a $\si_d^{kq}$-critical edge $\ell\subset I$ is located;

\item all points of $I$ map towards
$x$ by $\si_d^{kq}$.

\end{enumerate}

\end{enumerate}

Also, $\lam$ has isolated leaves with
both endpoints non-preperiodic, or with one periodic and one non-periodic endpoints. In particular,
$(a)$ the lamination $\lam$ is not perfect, and $(b)$ it cannot have a dense subset of (pre)periodic leaves whose endpoints
have equal preperiods.
\end{lem}

\begin{proof}
Consider the case when $G$ has periodic edges. Definitions and the fact that the degree of $\si_d^k|_{\bd(G)}$ is one imply
that there exists a number $q$ such that $\si_d^{kq}|_{\bd(G)}$ has a non-zero (finite) number of fixed edges,
the closures of all arcs between them are $\si_d^{kq}$ invariant, and points on each such arc map in the same direction
(clockwise or counterclockwise). If an arc $I$ like that contains no $\si_d^{kq}$-critical edges, then
one of the endpoints of $I$ is attracting for $\si_d^k|_{\bd(G)}$ (e.g., if $\si_d^{kq}$ maps points of $I$ in the clockwise
direction, it is the clockwise endpoint which attracts, in the topological sense, points of $I$).
This contradicts the fact that $\si_d^k$ is expanding.
Hence each such arc contains a $\si_d^{kq}$-critical edge that shares a $\si_d^{kq}$-fixed endpoint with a $\si_d^{kq}$-invariant
edge of $G$. It follows that some gaps from the orbit of $G$ have critical edges as claimed
(otherwise, i.e. if no gaps from the orbit of $G$ have critical edges, the existence of $\si_d^{kq}$-critical edges of $G$ would be impossible). On the other hand, if there are
no periodic edges of $G$ then there must exist critical edges of certain images of $G$
(recall that every edge of $G$ eventually maps to a critical or a periodic leaf). This proves the first claim of the lemma
and completes case (2) of the lemma. Moreover, by Lemma \ref{l:isolate} each critical edge with a periodic endpoint is isolated in $\lam$.
Thus, the case when $G$ has periodic edges is completed.

Suppose now that there are no periodic edges of $G$. Then the map $\si_d^k:\bd(G)\to \bd(G)$ has no periodic
points, and one can apply results from one-dimensional dynamics. Namely, by Theorem \ref{t:no-cyc} the map $\si_d^k:\bd(G)\to \bd(G)$
is semiconjugate to an irrational rotation of the circle by means of a monotone map. This corresponds to case (1) of the lemma.
Consider maximal by inclusion concatenations of edges of $G$.
Note that the image of a concatenation is a (possibly degenerate) concatenation of edges.
%as the only thing that may happen is that some critical leaves collapse.
Because the rotation number is irrational,
it follows that any maximal concatenation is wandering in the strong sense: all images of a maximal concatenation
are pairwise disjoint. Now, let $A$ be a concatenation like that. Then $\ol{A}$ has well-defined
endpoints, say, $a$ and $b$. Connect them with a chord; then the resulting gap (which is actually the convex hull
of $A$) is such that all its images
have pairwise disjoint interiors. By Theorem \ref{t:kiwinf} %\cite[Theorem 1.1]{kiw02}
that the concatenation $A$ can consist of at most $d-1$ edges of $G$ as claimed. Finally, by Lemma \ref{l:edges}
there are critical edges of gaps from the orbit of $G$.

Let us prove the last several claims of the lemma. In the rational case (2) the existence of desired isolated
critical leaves with both endpoints non-preperiodic, or with one periodic and one non-periodic endpoints, was already
established when we proved the existence of critical edges of $G$ with periodic endpoints. Since this by itself implies
claims (a) and (b), we are done in the rational case (2).

In the irrational case (1) by Lemma \ref{l:edges} we can choose a critical edge $\ell=\ol{xy}$ of
a periodic gap $U$ on which the first return map has an irrational rotation number.
If $\ell=\ol{xy}$ is the limit of a sequence of leaves $\ol{xy_i}$ approaching $\ell$ from the outside
of $U$, then their images intersect the interior of $\si_d(U)$, a contradiction. As before, claims
(a) and (b) easily follow.
\end{proof}

\begin{figure}
  \centering
  \includegraphics[height=4cm]{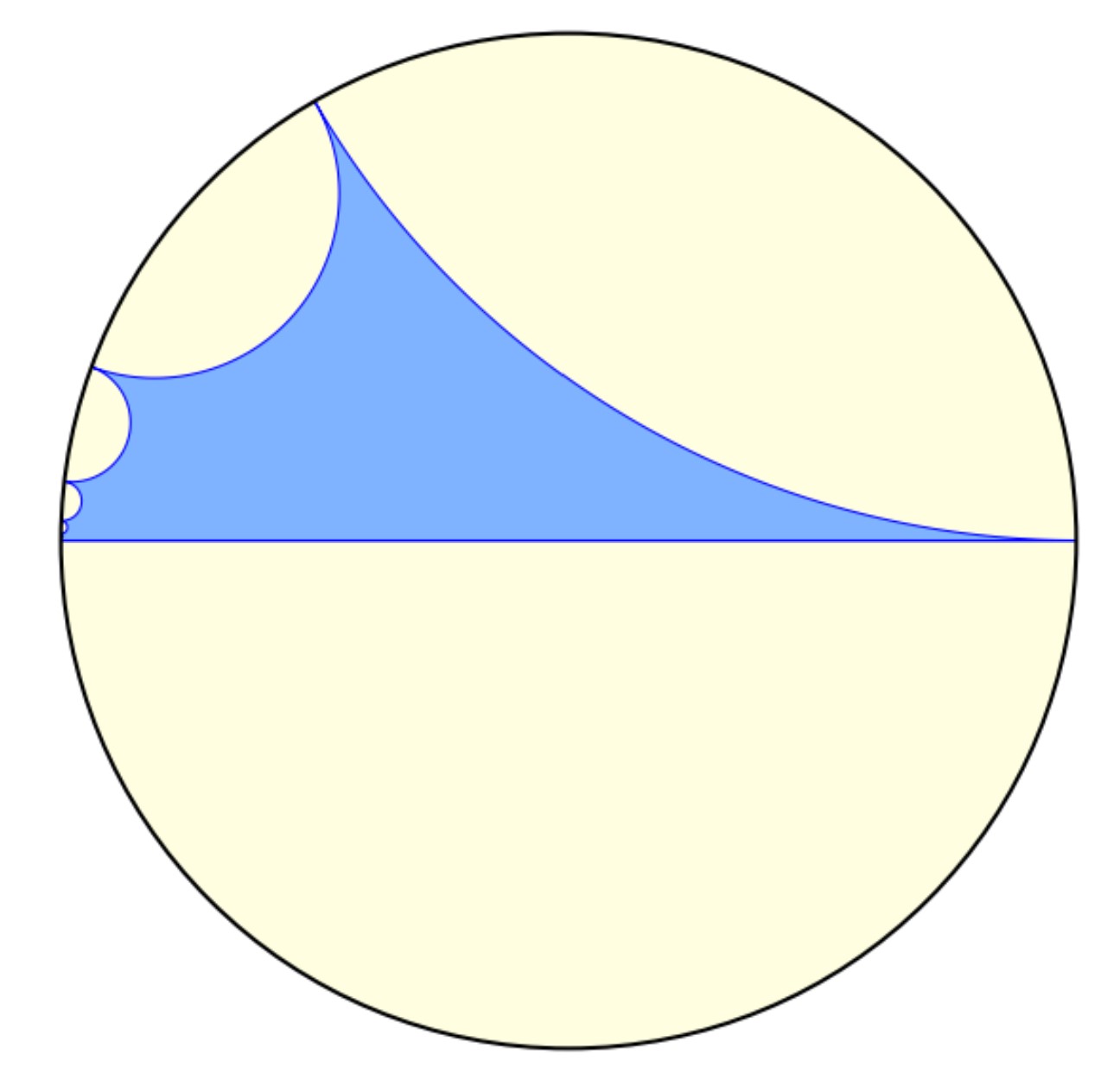}
  \includegraphics[height=4cm]{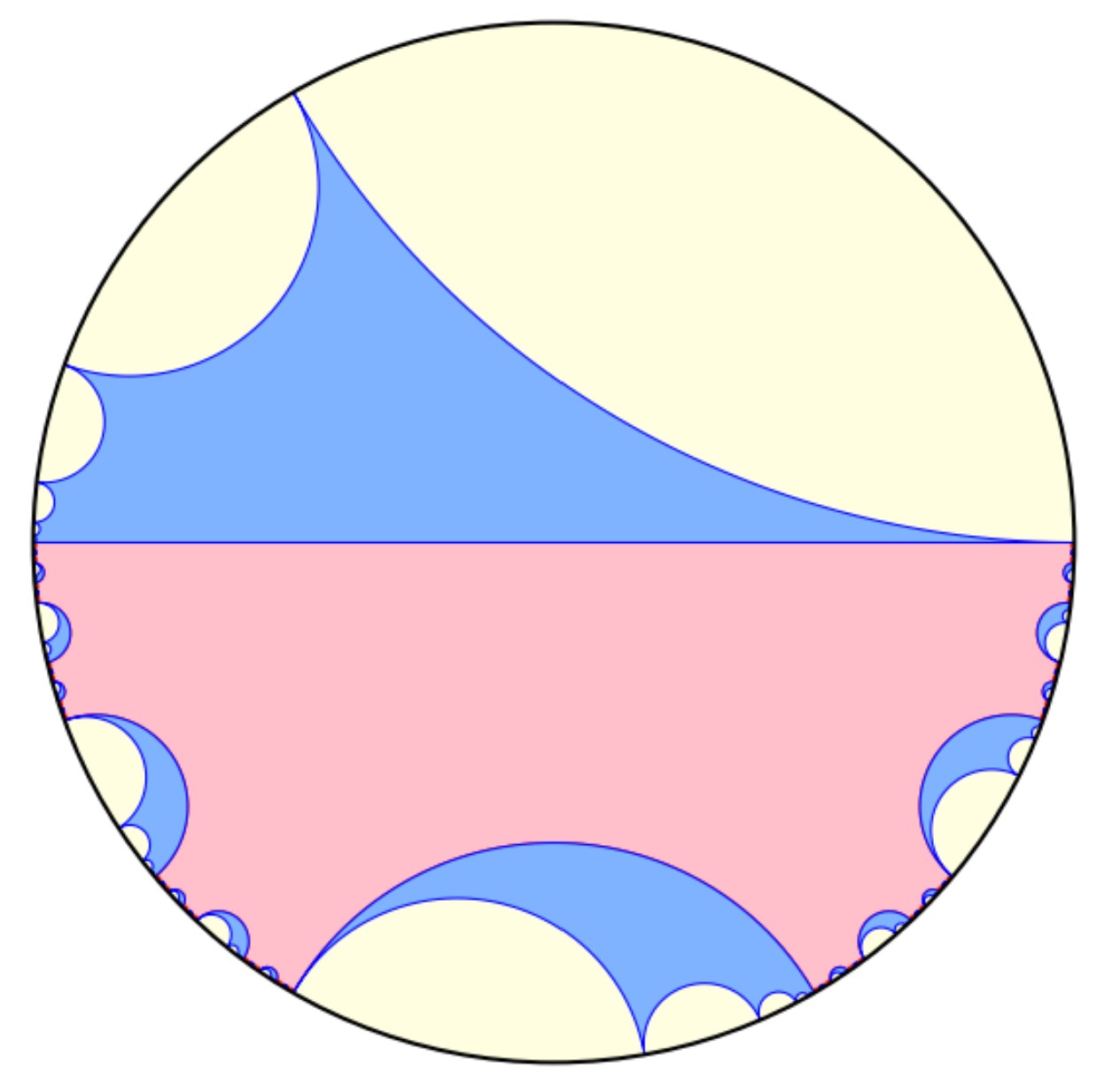}
  \caption{\small Left: a degree 1 invariant caterpillar gap of $\si_3$ with periodic edge $\ol{0\frac{1}{2}}$
  and critical edge $\ol{0\frac{1}{3}}$.
  Right: a quadratic invariant gap with periodic major $\ol{0\frac{1}{2}}$ and iterated pullbacks
  of the above caterpillar gap attached to its edges.
  The union of all these gaps is a degree 2 invariant caterpillar gap.}\label{fig:cat}
\end{figure}

%HERE WE NEED A FIGURE ILLUSTRATING CATERPILLAR GAPS, MAYBE OF DEGREE ONE AND OF DEGREE GREATER THAN ONE.

Gaps in case (1) are called \emph{Siegel gaps} and, in case (2),
 %gaps are said to be of
 they are called \emph{caterpillar gaps}.
%Indeed,
If a critical edge $\ell$ of a gap $G$ from Lemma \ref{l:crit-must} has a $\si_d^k$-fixed point, then %it follows that
there is %exists
a countable concatenation of edges of $G$ consisting of $\ell$ and its consecutive $\si_d^k$-pullbacks.
The name ``caterpillar'' refers to these countable concatenations of edges.
Observe that
this phenomenon (having a countable concatenation of leaves that begins with a critical leaf with a periodic endpoint)
is not confined to gaps of degree one.
For example, a gap of degree greater than one may have, say,
a periodic concatenation like that mapping onto %on top of
 itself under an appropriate iterate of the map.
In these cases we will still
 refer to such gaps as %\emph{gaps of caterpillar type} or just
 \emph{caterpillar gaps}.

A critical edge $\ell$ of a Fatou gap $U$ is isolated,
and there is a gap $U'$ on the other side of $\ell$ with $\si_d(U)=\si_d(U')$.
Since cycles of Siegel (caterpillar) gaps include gaps with critical edges,
then, in a lamination, such gaps  share edges with other infinite gaps attached on the opposite side.

 \begin{lem}\label{l:limleaf}
Suppose that $\lam_i\to \lam$ are $\si_d$-invariant laminations, and let $G$ be a periodic
gap-leaf of $\lam$. Then $G$ is also a gap-leaf of $\lam_i$ for all sufficiently large $i$.
\end{lem}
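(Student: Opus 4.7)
The plan is to split according to the definition of a lap: $G$ is either a finite periodic gap of $\lam$, or a nondegenerate periodic leaf of $\lam$ not on the boundary of any finite gap. The common mechanism is the expansion of $\si_d$ at periodic points: passing to a suitable iterate $\si_d^N$, each vertex (respectively, endpoint) of $G$ becomes a repelling fixed point with multiplier $d^N$, and this expansion forces strong rigidity on nearby leaves in any approximating lamination $\lam_i$.

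In the finite gap case, denote the vertices of $G$ by $v_1,\dots,v_k$ and its edges by $e_j=\overline{v_jv_{j+1}}$. I need to show that, for large $i$, (a) every $e_j$ is a leaf of $\lam_i$, and (b) no leaf of $\lam_i$ enters $\inte(G)$. For (a), I suppose toward contradiction that $e_j\notin\lam_i$ for some $j$ and infinitely many $i$. By Hausdorff convergence, $\lam_i$ contains a leaf $\ell_i=\overline{p_iq_i}$ with $p_i\to v_j$, $q_i\to v_{j+1}$, and $\ell_i\ne e_j$, so at least one of $p_i\ne v_j$ or $q_i\ne v_{j+1}$ holds for all large $i$. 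Iterating, the leaves $\si_d^{Nk}(\ell_i)\in\lam_i$ have endpoints displaced from $v_j,v_{j+1}$ by the factor $d^{Nk}$ and remain on the same arcs of $\uc$ as the original perturbations. The argument then splits according to the two rotational sides on which the perturbations $p_i-v_j$ and $q_i-v_{j+1}$ sit. If both lie on the same side of their respective vertex, a single iterate $\si_d^N(\ell_i)$ produces a chord whose four endpoints alternate on $\uc$ with those of $\ell_i$, making the two chords linked---a contradiction with the laminarity of $\lam_i$. If they lie on opposite sides, then after sufficiently many iterations the endpoints of $\si_d^{Nk}(\ell_i)$ are pushed into the arcs on the interior side of $e_j$, and a subsequential Hausdorff limit of the iterated leaves in $\lam$ lies inside $\inte(G)$, contradicting that $G$ is a gap of $\lam$.

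Part (b) follows by the same machinery: a leaf $m_i\in\lam_i$ inside $\inte(G)$ has a subsequential Hausdorff limit $m\in\lam$ meeting $\overline{G}$, which by $\inte(G)\cap\lam^+=\varnothing$ lies on $\bd(G)$. Since $G\cap\uc$ consists only of the vertices of $G$, $m$ must be an edge, and the iteration argument applied to $m_i$ again produces a chord of $\lam$ inside $\inte(G)$, a contradiction. Combining (a) and (b), $G$ is a finite gap of $\lam_i$ for all large $i$, hence a lap.

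The leaf case $G=\overline{ab}$, with $a,b$ periodic and $G$ not on the boundary of any finite gap, is handled by running the analogous iteration argument at the two endpoints $a$ and $b$. The hypothesis that $G$ is not on the boundary of a finite gap is exactly what prevents the iterates from ``escaping'' into a nearby finite gap of $\lam$, so one of the two subcases always produces a chord in $\lam$ contradicting the local structure of $\lam$ near $G$. The main obstacle in both cases is the combinatorial analysis of the direction of perturbation and the tracking of iterates until the contradiction emerges; conceptually, the expansion of $\si_d^N$ at the periodic vertices or endpoints of $G$ does all of the real work.
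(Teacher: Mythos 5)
Your overall mechanism --- expansion of $\si_d$ at the periodic vertices of $G$ --- is the same one the paper relies on, and your ``same rotational side'' case (the chords $\ell_i$ and $\si_d^N(\ell_i)$ become linked) matches the paper's observation that an approximating leaf cannot cross a periodic edge. But your second case contains a genuine gap. When the perturbations $p_i-v_j$ and $q_i-v_{j+1}$ lie on opposite rotational sides of their vertices, there are two sub-cases: both endpoints of $\ell_i$ lie on the arcs on the \emph{interior} side of $e_j$, or both lie on the arcs on the \emph{exterior} side. You only treat the first. In the second sub-case $\ell_i$ is a chord lying just outside $G$, roughly parallel to $e_j$, and the iterates $\si_d^{Nk}(\ell_i)$ are pushed \emph{further away} from $G$, not into $\inte(G)$; their subsequential limits are chords of $\lam$ on the far side of $e_j$, which is no contradiction, since $\lam$ is free to have such leaves. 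This is not a peripheral case: it is exactly what happens if $\lam_i$ were to have a lap slightly larger than $G$, so that $e_j\notin\lam_i$ while every approximating leaf stays strictly outside $G$. That is the main scenario the lemma must exclude, and your dichotomy never reaches a contradiction there.

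The paper closes this by arguing at the level of laps rather than individual leaves: it takes a lap $G_i$ of $\lam_i$ with $G_i\to G$, uses the crossing/interior observations (essentially your other cases) to conclude $G_i\supset G$, and then notes that if $G_i\supsetneq G$ then expansion at the periodic edges forces $\si_d^k(G_i)\supsetneq G_i$ --- impossible because $\si_d^k(G_i)$ is again a lap of $\lam_i$ sharing interior points with $G_i$. You would need to add this step (or an equivalent one) to dispose of the exterior sub-case. A related looseness occurs in your part (b), where the Hausdorff limit $m$ need not be an edge of $G$ (it can be degenerate, or a nondegenerate chord merely sharing a vertex with $G$), and in the leaf case, where approximating leaves displaced consistently to one side of $G=\ol{ab}$ again produce no contradiction without an argument about the gap of $\lam_i$ containing $G$.
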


\begin{proof}
Let $\ell$ be a $k$-periodic edge of $G$. Since $\lam_i\to \lam$, then we can choose gap-leaves $G_i$ of $\lam_i$ such that
$G_i\to G$. Choose an edge $\ell_i$ of $G_i$ so that $\ell_i\to \ell$.
Then $\ell_i$ does not cross $\ell$ for large $i$ as
otherwise the leaves $\si_d^k(\ell_i)$ and $\ell_i$ cross.
Moreover, $\ell_i$ is disjoint from the interior of $G$ for large $i$
as otherwise $\si_d^k(\ell_i)$ intersect the interior of $G_i$
(note that $\ell_i$ is repelled away from $\ell$ by $\si_d^k$). By
way of contradiction assume that $\lam_i$ do not contain $G$. Then
$G_i\supsetneqq G$ and $\ell_i\ne \ell$ for at least one edge $\ell$ of
$G$. It follows that $\si_d^k(G_i)\supsetneqq G_i$, a contradiction.
\end{proof}

\begin{comment}

A lamination $\lam$ is \emph{clean} \cite{thu85} if any pair of distinct non-disjoint leaves of $\lam$ is on the
boundary of a finite gap. Clean laminations give rise to equivalence
relations: $a\sim_\lam b$ if $a=b$ or $a$, $b$ are in the same gap-leaf of $\lam$.
If $\lam$ is clean and $\si_d$-invariant, then the quotient $\uc/\sim_\lam=J_\lam$ is called a
\emph{topological Julia set} and the  map $f_\lam:J_\lam\to J_\lam$, induced by $\si_d$,  is called a \emph{topological polynomial}.
By \cite[Lemma 3.16]{bopt16}, a clean lamination has the following property: if one endpoint of a
 leaf is periodic, then the other endpoint is also periodic with the same period.
It is easy to see that clean laminations $\lam$ are the same as q-laminations $\lam_\sim$ associated
 with laminational equivalence relations $\sim=\sim_\lam$.

 \end{comment}

 \section{Proper, perfect, and minimal laminations}\label{s:ppm}

In this section we discuss various types of laminations that are needed in our construction.

\subsection{Proper laminations}\label{ss:proper}

Recall several notions from \cite{bmov13}.
Two leaves with a common endpoint $v$ and the same image which is a leaf (and not a point) are said
to form a \emph{critical wedge} (the point $v$ then is said to be its \emph{vertex}).
An invariant lamination is \emph{proper} if it has neither a critical leaf with a periodic endpoint
nor a critical wedge with a periodic vertex. %A laminational equivalence relation is \emph{proper}
%if its lamination is proper.
By definition and by Lemma \ref{l:crit-must}, a proper lamination has no caterpillar gaps.

\begin{dfn}\label{d:fineq}
Let $\lam$ be a lamination. Define the equivalence relation
$\approx_\lam$ by declaring that $x{\approx_\lam}y$ if and only if
there exists a finite concatenation of leaves of $\lam$ joining $x$
to $y$.
\end{dfn}

Certain purely topological properties of invariant laminations imply that they are proper.

\begin{lem}[Perfect is proper]\label{l:pisp}
A perfect lamination $\lam$ is proper.
\end{lem}

\begin{proof}
Suppose that a leaf $\ol{xy}$ connects an $n$-periodic point $x$ and a point $y$ which is either non-periodic,
or of period greater than $n$.
Raising the map to the appropriate power we may assume that $x$ is fixed while $y$ is not fixed. Since $\lam$ is an
invariant lamination this implies that there exists another leaf $\ol{xz}$ such that $\si_d(\ol{xz})=\ol{xy}$. Moreover,
$\si_d(\ol{xy})=\ol{xz}$ is impossible as it would contradict Corollary \ref{c:order}. Thus, $\ol{xy}$ pulls back to yet
another leaf from an endpoint at $x$, and so on. By Lemma \ref{l:inficon} it follows that there countably many leaves with an
endpoint $x$, a contradiction with the fact that $\lam$ is perfect.
\end{proof}

Recall that q-laminations are introduced in Definition \ref{d:q}.

\begin{thm}\cite[Theorem 4.9]{bmov13}
\label{t:proper}
Let $\lam$ be a proper invariant lamination. Then $\approx_\lam$ is an invariant laminational equivalence relation
(so that all $\approx_\lam$-classes are finite). In particular, a concatenation of leaves of a proper invariant
lamination cannot connect a periodic and a non-periodic point.
Conversely, if $\lam$ is a q-lamination, then it is proper.
\end{thm}

Observe that an invariant laminational equivalence relation $\approx$
is visualized as the union of edges of convex hulls of $\approx$-classes.
However, %in the setting of Theorem \ref{t:proper},
this is not necessarily the case for proper laminations.
For example, a proper lamination $\lam$ may include edges of convex hulls of
$\approx_\lam$ classes \emph{and} some diagonals of those convex hulls.
Also, since by Lemma \ref{l:pisp} perfect laminations are proper, it follows that they give rise to
the corresponding laminational equivalence relations.

\subsection{Minimal laminations}

The so-called \emph{minimal laminations} play an important role in what follows.

%An important tool for us is the concept of a so-called \emph{minimal} laminations.

\begin{dfn}[Minimal laminations]\label{d:chiefs}
A minimal, by inclusion, non\-emp\-ty lamination is called a \emph{minimal lamination}.
A \emph{minimal sublamination of $\lam\ne \lam_\0$} is a sublamination
of $\lam$ that is minimal (and hence nonempty).
\end{dfn}

Consider two examples of quadratic minimal laminations.
First, take the lamination $\lam_P$ associated with a polynomial
$P(z)=e^{2\pi i\alpha} z+z^2$ with $\alpha\in\Q$.
Then $J_P$ is locally connected, and $\lam_P$ looks as explained
 in Subsection \ref{ss:lclam}, with an invariant gap-leaf representing the fixed parabolic point $0$.
Moreover, $\lam_P$ is minimal as it consists of the grand orbit of one leaf
(namely, of any edge of the nondegenerate invariant gap-leaf of $P$).

As another quadratic example,
 take a non-horizontal $\si_2$-critical leaf $\ol{xy}$, where $\si_2^N(x)=\si_2^N(y)=a_0$ is the unique
$\si_2$-fixed point of $\uc$ (i.e., the point with argument $0$) for some positive integer $N$,
consider iterated pullbacks of $\ol{xy}$ compatible with $\ol{xy}$, and close this %the resulting
set of chords to obtain a lamination $\lam$
(the construction of a \emph{pullback lamination} is due to Thurston \cite{thu85}).

Assume that there exists a nonempty $\lam'\subsetneqq \lam$;
then $\ol{xy}$ is contained in a gap $G$ of $\lam'$ as otherwise
$\lam'$ contains all pullbacks of $\ol{xy}$ and the closure of their union, i.e. $\lam'=\lam$, a contradiction.
The gap $G$ cannot contain $a_0$ as then it must be an invariant gap of $\lam'$ that maps onto itself two-to-one
which implies that $G=\ol{\disk}$ and that $\lam'$ is degenerate, a contradiction.

Consider $\si_2^N(G)$. Observe that $\si_2^N(x)=a_0\in \si_2^N(G)\cap \uc$. Also,
$\si_2(G)$ is not a point, and since no further image of $\si_2(G)$ equals $\ol{xy}$
(recall that $\ol{xy}$ is inside a gap $G$ of $\lam'$ and, hence, is not a leaf of $\lam'$)
then $\si_2^N(G)$ is not a point, and $a_0\in \si_2^N(G)$.
Since $\ol{xy}$ is a leaf of $\lam$, all points of $\si_2^N(G)\cap \uc$ have orbits contained in
the half-circle $S_0$ with endpoints $x, y$ containing $a_0$, and the only such point
is $a_0$ itself, a contradiction. Hence $\lam$ is minimal.

We claim that $\lam$ is perfect.
Indeed, repeatedly pulling $\ol{xy}$ back towards $a_0$ one can find a leaf $\ell$ of $\lam$ that is
arbitrarily close to $a_0$ and separates $a_0$ from $\ol{xy}$.
Since $\si_2^N(\ol{xy})=a_0$, we can pull $\ell$ back $N$ steps
 along the backward orbit of $a_0$ that leads to $\ol{xy}$.
In this way, one obtains two leaves with the same images enclosing $\ol{xy}$ in a narrow strip.
Thus, $\ol{xy}$ is not isolated in $\lam$. Hence
pullbacks of $\ol{xy}$ are not isolated in $\lam$ either. Since by definition any leaf of $\lam$ is either a pullback of $\ol{xy}$
or a limit of such pullbacks, $\lam$ is perfect.

A maximal, by inclusion, perfect sublamination $\lam^p$ of $\lam$ is called the \emph{perfect part} of $\lam$;
it is the set of all leaves $\ell\in \lam$ such that, arbitrarily close to $\ell$, there are
uncountably many leaves of $\lam$. %Perfect laminations are clean.

\begin{lem}\label{l:perf-count}
If $\lam$ is an invariant lamination, then so it $\lam^p$.
If $\lam$ is uncountable, then $\lam^p\subset\lam$ is nonempty.
A minimal lamination is either perfect or countable.
In the latter case all its nondegenerate leaves are isolated.
\end{lem}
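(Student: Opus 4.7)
I would analyze $\lam^+$ as a closed subset of the compact metric chord space via Cantor--Bendixson, and identify $\lam^p$ (together with $\uc$) with the set $K$ of condensation points of $\lam^+$, i.e.\ chords every neighborhood of which contains uncountably many chords of $\lam^+$. Closedness of $K$ is a routine diagonal argument. Forward invariance: $\si_d$ on the chord space is finite-to-one with bound depending only on $d$, so it sends uncountable sets to uncountable sets and preserves condensation, giving $\si_d(K)\subset K$.

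The crucial step in part (1) is producing, for each nondegenerate non-critical $\ell\in K$, a full sibling collection inside $K$. I would pick uncountably many distinct $\ell'\in\lam$ close to $\ell$, and for each a full sibling collection $C(\ell')\subset\lam$ supplied by sibling invariance of $\lam$. Since $\ell$ has only finitely many pullback chords, the number of possible full sibling configurations at $\ell$ is finite, so pigeonhole forces a single configuration $C=\{\ell_1^*,\dots,\ell_d^*\}$ of $\ell$ to be the limit of uncountably many distinct $C(\ell')$; distinct $\ell'$ automatically yield distinct $C(\ell')$ because any coordinate recovers $\ell'$ under $\si_d$. Lemma \ref{l:clos-sibl}(1) ensures $C$ is a genuine full sibling collection of $\ell$, and each coordinate $\ell_i^*$ is the limit of uncountably many distinct leaves in $\lam$, hence lies in $K$. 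A similar, easier pigeonhole handles the pullback-only case for critical $\ell$. Thus $K\cup\uc$ is a perfect sibling invariant sublamination of $\lam$; any other perfect sublamination is a perfect subset of the chord space and hence contained in $K$, so $\lam^p=K\cup\uc$ is indeed the maximal perfect sublamination, matching both characterizations in the statement.

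For (2), Cantor--Bendixson gives that the complement of $K$ in $\lam^+$ is countable, so if $\lam^+$ is uncountable then $K$ is nonempty; a Lindel\"of argument using second countability of the chord space rules out $K\subset\uc$, since otherwise every nondegenerate leaf would have a neighborhood containing only countably many leaves of $\lam$, forcing $\lam$ countable. For (3), if a chief $\lam$ is uncountable then (2) yields $\lam^p\ne\lam_\0$, and minimality forces $\lam^p=\lam$, so $\lam$ is perfect; the other case is $\lam$ countable. For (4), let $\lam$ be a countable chief and $\lam'$ its sublamination of non-isolated leaves, which is sibling invariant by Lemma \ref{l:clos-sibl}(2); if $\lam'$ had a nondegenerate leaf then minimality would give $\lam'=\lam$, making $\lam$ perfect and hence uncountable by (2), a contradiction. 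Therefore every nondegenerate leaf of $\lam$ is isolated.

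The main obstacle is the sibling-invariance step at the end of the second paragraph: one needs all $d$ coordinates of a single full sibling collection of $\ell$ to be condensation points at once, which requires the pigeonhole to be applied to entire configurations rather than to individual pullbacks. Finiteness of the set of configurations at a non-critical leaf, combined with Lemma \ref{l:clos-sibl}(1) guaranteeing that the limit configuration remains a full sibling collection, is what makes this work.
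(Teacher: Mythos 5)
Your argument is correct, but it is much more self-contained than what the paper actually does: the paper's proof is essentially three citations (it invokes \cite[Lemma 3.12]{bopt20} for the first claim, declares the second claim ``easy to see'', and derives the last two from Lemma \ref{l:clos-sibl}(2) exactly as you do). Your Cantor--Bendixson identification of $\lam^p$ with the condensation points is consistent with the paper's own stated equivalent characterization of $\lam^p$, and your pigeonhole-on-configurations step, combined with Lemma \ref{l:clos-sibl}(1), is a legitimate reconstruction of the content that the paper outsources to \cite{bopt20}. What your route buys is independence from the external reference; what the paper's route buys is brevity. Two small repairs are needed in your write-up. First, the claim that ``any coordinate recovers $\ell'$ under $\si_d$'' is off: a member of $C(\ell')$ determines $\si_d(\ell')$, not $\ell'$, and two nearby leaves that are siblings of one another may share a full sibling collection. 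The fix is immediate, since $\si_d$ is at most $d^2$-to-one on chords, so uncountably many $\ell'$ still produce uncountably many pairwise distinct images and hence pairwise disjoint collections, which is all you need for each $\ell_j^*$ to be a condensation point. Second, in part (4) you justify ``perfect hence uncountable'' by appeal to (2), but (2) is the converse implication; the fact you actually need is that a nonempty perfect lamination has uncountably many nondegenerate leaves, which follows from a standard Cantor scheme (a non-isolated nondegenerate leaf is approximated only by nondegenerate leaves, and nesting disjoint Hausdorff balls produces a Cantor set of such leaves, with lengths bounded below via Lemma \ref{l:compute}). With these two adjustments your proof is complete.
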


\begin{proof} By \cite[Lemma 3.12]{bopt20},
the set $\lam^p$ is an invariant lamination. If $\lam$ is uncountable,
then it is easy to see that
$\lam^p\subset\lam$ is nonempty. The last claim holds by part (2) of Lemma \ref{l:clos-sibl}.
\end{proof}

\begin{lem}\label{l:simple}
If $\lam$ is nonempty, then $\lam$ contains a minimal lamination.
\end{lem}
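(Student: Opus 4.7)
The plan is to apply Zorn's Lemma to the collection of nonempty sublaminations of $\lam$, ordered by reverse inclusion; a maximal element in this order is precisely a minimal nonempty sublamination of $\lam$, i.e., a chief.

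I would first check that every chain in this collection has an upper bound in the reverse-inclusion order, that is, a nonempty sublamination contained in every chain element. Given such a chain $\{\lam_\alpha\}_{\alpha\in I}$, I would view it as a net in the compact space $\ssl_d$ (Theorem \ref{t:laclo}) and extract a convergent subnet $\lam_{\alpha_\beta}\to\mu$. The limit $\mu$ is automatically a sibling invariant lamination by compactness of $\ssl_d$.

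Next, I would verify that $\mu$ is nonempty. By Lemma \ref{l:compute}, every $\lam_{\alpha_\beta}$ contains a nondegenerate leaf $\ell_\beta$ of length at least $\tfrac{1}{d+1}$. The family of chords of length at least $\tfrac{1}{d+1}$ is compact in the Hausdorff topology on chords, so, after passing to a further subnet, the leaves $\ell_\beta$ converge to a nondegenerate chord $\ell$ of length at least $\tfrac{1}{d+1}$; since leaves of $\mu$ are precisely Hausdorff limits of leaves in the $\lam_{\alpha_\beta}$'s, we have $\ell\in\mu$, and therefore $\mu$ is nonempty.

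It remains to show $\mu\subseteq\lam_{\alpha_0}$ for every index $\alpha_0\in I$. By the total ordering of the chain and the definition of a subnet, eventually $\lam_{\alpha_\beta}\subseteq\lam_{\alpha_0}$. Since $\lam_{\alpha_0}$ is a closed family of chords, any Hausdorff limit of leaves drawn from the eventually-contained $\lam_{\alpha_\beta}$'s belongs to $\lam_{\alpha_0}$; hence every leaf of $\mu$ lies in $\lam_{\alpha_0}$, giving $\mu\subseteq\lam_{\alpha_0}$. This supplies the required lower bound, and Zorn's Lemma delivers the desired chief. The main subtlety I would be careful about is the compatibility of the two subnet extractions (one for convergence in $\ssl_d$, the other for the leaves $\ell_\beta$), but this is routine since both live in compact spaces and one may perform the second extraction from the first subnet without losing convergence of the laminations.
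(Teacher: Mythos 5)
Your proof is correct and follows essentially the same route as the paper: Zorn's lemma applied to the nonempty sublaminations of $\lam$ under reverse inclusion, with non-triviality of the chain's lower bound secured by the leaves of length $\ge\frac{1}{d+1}$ supplied by Lemma \ref{l:compute}. The only cosmetic difference is that the paper takes the lower bound to be the intersection $\bigcap\lam_\al$ directly (invoking Definition \ref{d:sibli} to see that it is again a sibling invariant lamination), whereas you realize it as a convergent subnet limit in the compact space $\ssl_d$; both yield the same conclusion.
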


\begin{proof}
Let $\lam_\al$ be a nested family of laminations.
Definition \ref{d:sibli} implies
that then $\bigcap \lam_\al$ is a sibling invariant lamination too.
If all $\lam_\al$ are nonempty, then by Lemma \ref{l:compute} each of them has a leaf of length at least $\frac{1}{d+1}$ and so
$\bigcap \lam_\al$ is nonempty. Now the desired statement follows
from Zorn's lemma.
\end{proof}

For $\lam$ and a nondegenerate leaf $\ell\in \lam$, let $\mathcal
G(\ell)\subset \lam$ be the set of all ite\-rated pullbacks of $\ell$ and of all its nondegenerate iterated
images. Lemma \ref{l:perf-count} and compactness of invariant laminations imply Lemma \ref{l:dense}.

\begin{lem}\label{l:dense}
Let $\lam$ be a minimal lamination. If $\ell\in \lam$ is a nondegenerate
leaf, then all nondegenerate leaves of $\lam$ are in the closure of $\mathcal G(\ell)$.
In particular, if $\lam$ is minimal and countable, then for any nondegenerate leaf $\ell$ of
$\lam$ we have that $\mathcal G(\ell)=\lam$.
\end{lem}

\subsection{Invariant objects}
\label{ss:invobj}

Let $\Delta\subset\C$ be an open Jordan disk.
Recall \cite[Definition 3.6]{gm93} that a continuous map $f:\ol\Delta\to\C$ is \emph{weakly polynomial-like}
 (\emph{weakly PL} for short) of degree $d$ if $f(\bd(\Delta))\cap\Delta=\0$, and the induced map
 on integer homology
$$
f_*:H_2(\ol\Delta,\bd(\Delta))\cong\Z\ \to\  H_2(\C,\C\sm\{z_0\})\cong \Z
$$
is the multiplication by $d$, where $z_0$ is any base point in $\Delta$.

\begin{lem}\cite[Lemma 3.7]{gm93}
  \label{l:fxptgm}
  If $f:\ol\disk\to\C$ is weakly PL with isolated fixed points, then
  the degree of $f$ equals the sum of the Lefschetz indices over all fixed points of $f$ in $\ol\disk$.
\end{lem}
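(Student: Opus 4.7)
The plan is to translate the statement into a winding-number computation for the displacement map $g(z)=f(z)-z$. The fixed points of $f$ are exactly the zeros of $g$ in $\ol\disk$, and the assumption that they are isolated together with compactness of $\ol\disk$ guarantees there are only finitely many of them. Recall that for an interior isolated fixed point $p$ the Lefschetz index of $f$ at $p$ coincides with the local degree of $g$ at $p$; that is, with the winding number of $g$ about $0$ restricted to a small positively-oriented circle around $p$.

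First I would reduce to the case when there are no fixed points on $\bd(\disk)$. Since $f$ has only finitely many fixed points in $\ol\disk$, one can pick $r<1$ arbitrarily close to $1$ such that the circle $|z|=r$ contains no fixed points and still bounds a disk containing all of them; restricting $f$ to $\ol{\disk}_r=\{|z|\le r\}$, the resulting map is still weakly PL of the same degree (the defining condition $f(\bd\disk)\cap\disk=\0$ is an open condition, so $f(\bd\disk_r)\cap\disk_r=\0$ for $r$ close enough to $1$ by continuity), so we may assume from the start that all fixed points lie in the open disk $\disk$.

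Next I would use additivity of winding numbers: with all zeros of $g$ in the interior, the sum of local degrees of $g$ at its zeros equals the winding number of $g|_{\bd(\disk)}$ about $0$. So the goal reduces to showing
\[
\mathrm{ind}(g|_{\bd(\disk)},0)\;=\;d.
\]
For this, consider the straight-line homotopy
\[
H_t(z)\;=\;f(z)-tz,\qquad t\in[0,1],\ z\in\bd(\disk).
\]
If $H_t(z)=0$ for some $z\in\bd(\disk)$ and $t\in[0,1]$, then $f(z)=tz$, hence $|f(z)|=t\le 1$. But $f(\bd(\disk))\cap\disk=\0$ forces $|f(z)|\ge 1$, so necessarily $t=1$ and $f(z)=z$, contradicting our reduction. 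Therefore $H_t$ never vanishes on $\bd(\disk)$, and the winding numbers of $H_0=f|_{\bd(\disk)}$ and $H_1=g|_{\bd(\disk)}$ about $0$ agree. Finally, the winding number of $f|_{\bd(\disk)}$ about $0\in\disk$ is precisely the homological degree appearing in the definition of weakly PL, namely $d$, because $f(\bd(\disk))$ avoids $\disk$ and the map on $H_2(\ol\disk,\bd\disk)\to H_2(\C,\C\sm\{0\})$ is multiplication by $d$.

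I expect the only subtle point to be the identification of the Lefschetz index at a fixed point with the local degree of $g$; this is standard, but one must check that the local \emph{Lefschetz} index (defined via the graph of $f$ intersecting the diagonal in $\C\times\C$) really coincides with the degree of the displacement map on a small circle, with compatible orientations. Once that is in hand, the rest is the homotopy argument above plus the homological identification of $d$ with the winding number of $f|_{\bd(\disk)}$.
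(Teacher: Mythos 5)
The paper gives no proof of this lemma: it is quoted directly from Goldberg--Milnor \cite[Lemma 3.7]{gm93}, so your argument can only be compared with the source. Your overall strategy --- identifying the sum of Lefschetz indices with the winding number of the displacement map $g=f-\mathrm{id}$ over the boundary, then using the homotopy $f(z)-tz$ to equate that winding number with the homological degree $d$ --- is the standard one and agrees with Goldberg--Milnor in the case where $f$ has no fixed points on $\bd(\disk)$.

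There is, however, a genuine gap in your reduction to that case. The weakly PL condition $f(\bd(\disk))\cap\disk=\0$ involves the \emph{open} disk, so it does not exclude fixed points on the unit circle, and the lemma explicitly sums over all fixed points in $\ol\disk$. When boundary fixed points are present you cannot choose $r<1$ so that $\{|z|\le r\}$ ``still bounds a disk containing all of them'': shrinking the disk discards exactly those fixed points and their indices. This is not a corner case here: the one application of the lemma in the paper (Lemma \ref{l:inv-exist}) is to the piecewise-linear extension of $\si_d$, which has $d-1$ fixed points on $\uc$, each of Lefschetz index one; the argument there hinges on these boundary fixed points contributing $d-1$ toward the total $d$, leaving index $1$ to be realized inside $\disk$. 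If your reduction were valid, the interior indices alone would sum to $d$, which is false for that example. A correct proof must either define and compute the index at a boundary fixed point directly (as Goldberg--Milnor do, using the winding number of $g$ along the boundary curve indented into $\disk$ around each such point) or perturb $f$ to move boundary fixed points inward while tracking their indices. Separately, your assertion that $f(\bd(\disk_r))\cap\disk_r=\0$ for $r$ near $1$ ``by openness'' is unjustified: knowing $|f|\ge 1$ on $|z|=1$ does not give $|f|\ge r$ on $|z|=r$, so even the shrinking step, where legitimate, requires an argument.
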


The notion of the \emph{Lefschetz index}, adapted to the present context, is discussed in \cite{gm93}.
A gap $G$ of a lamination is
\emph{invariant} if $\si_d(G)=G$ (with ``$=$'' rather than ``$\subset$'').

\begin{lem}
  \label{l:inv-exist}
A $\si_d$-invariant lamination has an invariant gap-leaf or an invariant infinite gap.
\end{lem}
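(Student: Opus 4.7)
The plan is to apply the Lefschetz-type fixed point formula of Lemma~\ref{l:fxptgm} to a carefully chosen continuous extension $\hat\sigma_d\colon\cdisk\to\cdisk$ of $\sigma_d\colon\uc\to\uc$ that respects the lamination $\lam$.

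First I would construct $\hat\sigma_d$ by extending $\sigma_d$ linearly along every leaf $\ell\in\lam$, so that $\hat\sigma_d(\ell)=\sigma_d(\ell)$, and, for each gap $G$ of $\lam$ (finite or infinite), by mapping $G$ continuously onto the image gap $\sigma_d(G)$ via a coning construction from a chosen interior basepoint. The crucial feature of this construction is that when $\sigma_d(G)\neq G$, the sets $\sigma_d(G)$ and $\mathrm{int}(G)$ are disjoint (distinct gaps of a lamination have pairwise disjoint interiors), so the restriction of $\hat\sigma_d$ to $\mathrm{int}(G)$ has no fixed points. Likewise, linear extension across a non-invariant leaf produces no interior fixed points, since $\sigma_d(\ell)\cap\mathrm{int}(\ell)=\emptyset$ whenever $\sigma_d(\ell)\ne\ell$.

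Next, $\hat\sigma_d$ is weakly polynomial-like of degree $d$, because its restriction to $\uc=\bd(\disk)$ is the degree-$d$ covering $\sigma_d$; in particular $\hat\sigma_d(\bd(\disk))\subset\uc$ and the induced map on second integer homology is multiplication by $d$. By Lemma~\ref{l:fxptgm}, the Lefschetz indices of the fixed points of $\hat\sigma_d$ in $\cdisk$ sum to $d$. Each of the $d-1$ fixed points of $\sigma_d$ on $\uc$ contributes Lefschetz index $1$, as for the standard model $z\mapsto z^d$ (the tangential derivative $d$ along $\uc$ is the same for $\hat\sigma_d$). Hence the contribution from interior fixed points is at least $1$, so $\hat\sigma_d$ has at least one fixed point in $\disk$; by the design of the extension, it must lie in an invariant leaf of $\lam$, in an invariant finite gap, or in an invariant infinite gap.

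To finish, an invariant finite gap is by definition an invariant lap, and an invariant infinite gap is the second alternative of the lemma. An invariant leaf not on the boundary of a finite gap is itself a lap. The main obstacle is the remaining configuration: an invariant leaf $\ell$ lying on the boundary of a finite gap $G$ with $\sigma_d(G)\ne G$. In this case $\sigma_d$ swaps the two sides of $\ell$ and $\{G,\sigma_d(G)\}$ forms a $2$-cycle, so $\ell$ does not qualify as a lap. I would dispose of this case by analyzing the $d-1$ fixed points of $\sigma_d$ on $\uc$: any such fixed point that is not a vertex of a gap and not an endpoint of a leaf lies in the interior of a $\uc$-arc of a unique Fatou gap $F$, and since $\sigma_d(F)$ must again contain this fixed point, uniqueness forces $\sigma_d(F)=F$, giving an invariant infinite gap. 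The boundary subcase in which every fixed point of $\sigma_d$ on $\uc$ happens to be a vertex or leaf endpoint would be handled by a direct combinatorial argument producing an invariant leaf with both endpoints fixed, which is automatically a lap.
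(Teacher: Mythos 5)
Your core argument is the one the paper itself uses: extend $\si_d$ over $\cdisk$ compatibly with $\lam$ (the paper extends linearly over a barycentric subdivision, you cone over gaps --- an immaterial difference), observe that the extension is weakly PL of degree $d$, and combine Lemma \ref{l:fxptgm} with the $d-1$ index-one fixed points on $\uc$ to force a fixed point in $\disk$, which must lie in an invariant leaf or an invariant gap. One small point: Lemma \ref{l:fxptgm} assumes \emph{isolated} fixed points, and your extension is the identity on any invariant leaf with fixed endpoints; so, as the paper does, you must first dispose of the case where $\lam$ already has an invariant nondegenerate leaf or an invariant gap with a fixed boundary point, and only then run the index count. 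You never verify this hypothesis.

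The genuine gap is in your last paragraph, where you handle the residual configuration of an invariant leaf $\ell$ that is an edge of a non-invariant finite gap $G$ (so $\ell$ is not a lap). Your proposed resolution ends by ``producing an invariant leaf with both endpoints fixed, which is automatically a lap'' --- but by the paper's definition a lap is a nondegenerate leaf \emph{not on the boundary of a finite gap}, and a fixed-endpoint invariant leaf bounding a finite gap is exactly the configuration you are trying to exclude, so this is circular. Your description of the configuration is also off: when both endpoints of $\ell$ are fixed, $\si_d|_{\bd(G)}$ preserves cyclic order, so the side-swapping $2$-cycle $\{G,\si_d(G)\}$ cannot occur; what can occur instead is $\si_d(G)=\ell$, i.e., $G$ is a collapsing gap attached to $\ell$ (e.g., for $\si_3$, a triangle with vertices of arguments $0$, $1/3$, $1/2$ collapsing onto the invariant leaf joining the points of arguments $0$ and $1/2$). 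In that situation one must look at the gap on the other side of $\ell$, or elsewhere, to locate an invariant lap or infinite gap. Finally, your auxiliary claim that a circle fixed point which is not an endpoint of a leaf lies in the interior of a $\uc$-arc of a unique Fatou gap fails for fixed points at which nondegenerate leaves accumulate from both sides without terminating there. So the residual case still needs an actual argument; your sketch does not supply one.
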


\begin{proof}
  Consider a $\si_d$-invariant lamination $\lam$.
Extend $\si_d$ linearly over each simplex in the barycentric subdivision of $\lam$ (cf. \cite{thu85}),
 and denote the extended map by $f:\ol\disk\to\C$; $f$ is weakly PL.
If there are invariant nondegenerate leaves of $\lam$ or invariant gaps with
 fixed points on the boundary, we are done.
Otherwise, $f$ has isolated fixed points, and there are
$d-1$ fixed points on $\uc$, all with Lefschetz index one.
Thus, there is a fixed point $a$ inside $\disk$.
If $G$ is a gap of $\lam$ containing $a$, then $G$ is clearly invariant.
\end{proof}

\section{Invariant gaps, their canonical laminations, and flower-like sets}
\label{s:invgap}

By \emph{cubic} (resp., \emph{quadratic}) laminations, we always mean \emph{sibling $\si_3$-}(resp.,
\emph{$\si_2$-}) \emph{invariant} laminations.
For brevity we denote the horizontal diameter of the unit disk by $\hdi$.
\textbf{\emph{From now on $\lam$ (possibly with
sub- and superscripts) denotes a cubic sibling invariant
(pre)lamination.}} This section is based upon \cite{bopt16} but contains some further developments, too.

\subsection{Invariant gaps} An \emph{invariant gap} is an invariant gap of a cubic lamination
 (the latter may be unspecified). Various types of infinite invariant gaps are described in \cite{bopt16}.

 \subsubsection{Quadratic invariant gaps}\label{sss:quig}
 An infinite invariant gap is \emph{quadratic} if it has degree 2.
A critical chord $\oc$ gives rise to an open circle arc
$L(\oc)$ of length $2/3$ with the same endpoints as $\oc$;
let $I(\oc)$ be the complement of $\ol{L(\oc)}$.
Clearly, the set $\Pi(\oc)$ of all points with orbits in $\ol{L(\oc)}$ is nonempty,
closed and forward invariant. Let $\Pi'(\oc)$ be the maximal perfect subset of $\Pi(\oc)$.
Then by \cite[Lemmas 3.3,3.6,3.9]{bopt16} the convex hulls $G(\oc)$ of \newline $\Pi(\oc)$  and  $G'(\oc)$ of $\Pi'(\oc)$ are
invariant quadratic gaps, and any invariant quadratic gap is like that.
If $\oc$ has non-periodic endpoints, or both its endpoints eventually map to $I(\oc)$,
then $\Pi(\oc)=\Pi'(\oc)$ and $G(\oc)=G'(\oc)$. This allows us to classify quadratic invariant gaps.

\begin{dfn}\label{d:class3}
If the $\si_3$-orbit of $\si_3(\oc)$ is contained in $ L(\oc)$,
then $\oc$ is an edge of $G(\oc)$, and $G(\oc)$  is said to be of \emph{regular critical type}.
If an endpoint  of $\oc$ is periodic, and the orbit of $\oc$ is contained in $\ol{L(\oc)}$,
then $\Pi'(\oc)\subsetneqq \Pi(\oc)$, the gap
$G(\oc)$ is said to be of \emph{caterpillar} type, and
$G'(\oc)$ is said to be of \emph{periodic} type.
\end{dfn}

Quadratic invariant gaps give rise to \emph{canonical} laminations
\cite[Lemmas 3.11, 3.12 and 3.13]{bopt16}.
Defining canonical laminations is easy for quadratic invariant gaps of regular critical and caterpillar type as in those cases
 (i.e., if the major of the quadratic invariant gap $U$ is critical)
 there is a \emph{unique} lamination $\lam_U$ that has the gap $U$.
Indeed, under the given assumptions, we already have two critical sets of $\lam_U$, namely $U$ and its
(critical) major, therefore,
 all iterated pullbacks of $U$ that are gaps of $\lam_U$ are well-defined;
 these pullbacks of $U$ ``tile'' the entire $\cdisk$ and give rise to $\lam_U$.

For a gap $U$ of periodic type, $\lam_U$ is as follows.
Add to $U$ a critical quadrilateral $Q_U=Q$, which is the convex hull of
 the major $M_U=M$ of $U$ and its sibling $M'_U=M'$ located outside of $U$.
Form the pullback lamination with critical sets $U$ and $Q$;
 this lamination is now well defined.
After removing the edges $\ell$ and $\ell'$ of $Q$ distinct from $M$, $M'$, and all
iterated pullbacks of $\ell$ and $\ell'$,
 we obtain the canonical lamination $\lam_U$ which,
 unlike before, has \emph{two} cycles of Fatou gaps.
Indeed, $\lam_U$ has a gap $V\supset Q$ with edges $M$ and $M'$,
 and $\si_3|_{\bd(V)}$ is two-to-one.
The entire $\cdisk$ is ``tiled'' by concatenated pullbacks of $U$ and $V$.
In this context, $U$ is said to be the \emph{senior (gap)}, and $V$ is called the \emph{vassal (gap)}, cf.
\cite[Lemma 3.4]{bopt16}.

\subsubsection{Invariant gaps of degree one}

Finite rotational sets (under the name of fixed point portraits) are classified in
\cite{gm93}; in \cite[Subsection 4.1]{bopt16}, we specify the picture for the cubic case.
Infinite invariant gaps of degree 1 are studied in
\cite[Lemma 4.6]{bopt16} and in Lemma \ref{l:crit-must}. Lemma \ref{l:invagap} follows from these results.

\begin{lem}\label{l:invagap}
A degree one invariant gap $G$ of a cubic lamination has one or two majors;
every edge of $G$ eventually maps to a major and, if $G$ is infinite,
at least one of its majors is critical.
\end{lem}

An invariant gap $G$ is \emph{rotational} if $\si_3$ acts on
$G\cap\uc$ as a combinatorial rotation different from the identity.
A chord is \emph{compatible} with a finite collection
of gaps if it does not cross edges of these gaps.

\subsection{Flower-like sets} Let us introduce the following useful concept.

\begin{dfn}[Flower-like sets]\label{d:flower}
Suppose that $\lam$ has an infinite invariant gap $U$ or an invariant
gap-leaf $G$ and an infinite gap $U$ that shares an edge with $G$
(in the latter case, it follows that $U$ is periodic).
Then $\{U\}$ (in the former case) or the set consisting of $G$ and all periodic Fatou gaps attached to it
(in the latter case) is said to be a \emph{flower-like set}. Thus, a flower-like set
is a certain set of gaps or gap-leaves. Flower-like sets can be viewed as standing alone
(i.e., without specifying a lamination but with the understanding that such a lamination exists).
\end{dfn}

By saying that $\lam$ \emph{has} a flower-like set $F$, we mean that all
gaps/gap-leaves from $F$ are gaps/gap-leaves of $\lam$.
Say that $\lam$ is \emph{compatible} with a flower-like set $F$ if no leaf of $\lam$ crosses an edge of a gap from $F$.
Flower-like sets represent dynamics of polynomials with a non-repelling fixed point.

\begin{lem}\label{l:flowerc}
If sets $F_i, i=1, 2, \dots$ are flower-like, %sets,
 and $F_i\to F$, then $F$ contains a flower-like set.
\end{lem}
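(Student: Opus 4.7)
Plan: I would lift each $F_i$ to a cubic sibling invariant lamination $\lam_i$ having $F_i$ as gaps/laps, then pass to Hausdorff limits using the compactness of $\ssl_3$ and study what survives. By Theorem~\ref{t:laclo}, after a subsequence $\lam_i\to\lam$ for some cubic sibling invariant lamination $\lam$. A flower-like set in a cubic lamination has bounded cardinality (at most four: an invariant lap in the cubic setting has at most three edges, hence at most three attached periodic Fatou gaps in type~(b); in type~(a) there is a single infinite invariant gap). Hence, after a further subsequence, all $F_i$ have the same type and cardinality and each constituent gap/lap converges in Hausdorff topology. Let $F$ denote the resulting collection of limit sets; we show some subcollection of $F$ is a flower-like set.

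In the type~(b) case, where $F_i=\{G_i\}\cup\{U_i^{(1)},\dots,U_i^{(k)}\}$ with $G_i$ invariant and $U_i^{(j)}$ periodic Fatou gaps attached to $G_i$, the periods of $G_i$ and of the $U_i^{(j)}$ are combinatorially bounded and may be assumed constant. Lemma~\ref{l:limleaf} then shows that $G=\lim G_i$ is an invariant lap of $\lam$, and the analogous argument applied to the common edges yields that each $U^{(j)}=\lim U_i^{(j)}$ is a periodic Fatou gap of $\lam$ still sharing that edge with $G$. Thus $F$ itself is flower-like.

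In the type~(a) case, $F_i=\{U_i\}$ with $U_i$ an infinite invariant gap, and $U=\lim U_i$. If $U\cap\uc$ is uncountable, then limits of edges of $U_i$ are leaves of $\lam$ bounding $U$, and passing to the limit in $\si(U_i\cap\uc)=U_i\cap\uc$ exhibits $U$ as an infinite invariant gap of $\lam$; then $\{U\}\subset F$ is flower-like. Otherwise $U_i$ collapses to an invariant finite lap. In this subcase, the critical chord $\oc_i$ defining $U_i$ as in Section~\ref{s:invgap} converges along a subsequence to a non-degenerate critical chord $\oc$ (non-degenerate $\si_3$-critical chords form a compact set, all of length $\tfrac13$). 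One then recovers the invariant quadratic gap $G(\oc)$ (or $G'(\oc)$ in the periodic subtype) via the canonical lamination $\lam_{G(\oc)}$, and using Lemma~\ref{l:union} this gap can be incorporated into $\lam$ if necessary. The infinite gap $G(\oc)$ together with the limit finite lap (attached to it along an edge) then gives the desired flower-like set inside $F$.

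The hardest step is the degenerate subcase of~(a): when $U_i$ collapses, the map $\oc\mapsto G(\oc)$ is discontinuous across transitions between regular critical, caterpillar, and periodic subtypes, so one must carefully track which gap appears in the Hausdorff limit and verify it is attached to the limit lap along an appropriate edge. The key ingredients are the compactness of non-degenerate critical chords, the semi-continuity of the orbit closures $\Pi(\oc)$, Lemma~\ref{l:inv-exist} supplying invariant structure in $\lam$, and Lemma~\ref{l:union} for assembling compatible pieces.
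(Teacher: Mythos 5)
Your overall strategy (pass to Hausdorff limits and use the rigidity of periodic objects) points in the right direction, but the proposal misses the one step the whole lemma turns on: showing that an \emph{infinite} periodic gap cannot degenerate to a finite lap in the limit. The paper's proof gets this from Lemma~\ref{l:compute}: since the forward orbit of every chord contains a chord of length at least $\tfrac14$, for every $N$ there is $\varepsilon>0$ such that each $U_i$ has at least $N$ edges longer than $\varepsilon$, uniformly in $i$; hence the limit gap has infinitely many nondegenerate edges and is again an infinite periodic gap. You instead treat the collapse of $U_i$ to a finite lap as a genuine possibility and build an elaborate workaround with the critical chords $\oc_i$ and the gaps $G(\oc)$. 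Besides being unnecessary, that workaround does not prove the statement: the lemma asserts that $F$ \emph{itself} contains a flower-like set, and the gap $G(\oc)$ you manufacture is not an element of $F=\{U\}$, so exhibiting a flower-like set containing $G(\oc)$ establishes nothing about $F$. The same omission infects your type~(b) case, where you assert without argument that each $U^{(j)}=\lim U_i^{(j)}$ is still a \emph{Fatou} (infinite) gap.

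Two further problems. First, the claim that a flower-like set has at most four elements because ``an invariant lap in the cubic setting has at most three edges'' is false: invariant rotational laps can be $q$-gons for arbitrary $q$ (and then up to $q$ periodic Fatou gaps can be attached), so periods of the $U_i^{(j)}$ are not ``combinatorially bounded'' a priori. This happens to be harmless only because the definition of $F_i\to F$ given after the lemma already fixes the common cardinality. Second, in type~(b) you assume $G=\lim G_i$ is again a lap; but finite invariant laps $G_i$ with growing vertex sets can converge to an infinite invariant gap. That is a real branch of the argument (in which case $\{G\}$ is already the desired flower-like set), and it must be treated before one can invoke the stabilization $G_i=G$ via Lemma~\ref{l:limleaf} and conclude that the limits $U^{(j)}$ share an edge with $G$.
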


\begin{proof}
If periodic gaps/gap-leaves $H_i$ of period $m$ converge to a gap/gap-leaf
$H$, then $H$ is periodic of period $m$. We claim that if
all $H_i$'s are infinite, then $H$ must also be infinite.

Indeed, let $H_i$ be an infinite gap
of period $m$. Then there exists an edge $\ell_0$ of $H_i$ such that a backward orbit $\Ac$ of $\ell_0$
formed by edges of gaps from the orbit of $H_i$ consists of infinitely many distinct leaves.
For each leaf $\ell_j=\ol{x_jy_j}\in \Ac$ which is an edge of a gap $V_j$ from the orbit of $H_i$
let $t(\ell_i)$ be the length of the circle arc with endpoints $x_j$ and $y_j$
which contains no vertices of $V_j$. Evidently (see Lemma \ref{l:compute}) we can choose $\ell$ so that
$t(\ell)\ge \frac{1}{d+1}$ while other numbers $t_i$ are less than $\frac{1}{d+1}$. Recall that when we pull
back the length of the associated arc decreases \emph{at most} $d$-fold. This implies
that for any integer $N>0$, there is $\varepsilon>0$ such that
$N$ edges of $H_i$ are longer than $\varepsilon$ for any $i$. Choosing a subsequence, we see that $H$ has
at least $N$ nondegenerate edges. Repeating this argument for every $N$, we see that $H$ is an infinite gap.

In particular a sequence of invariant infinite gaps converges to an invariant infinite gap. Hence, if $F_i$'s
are invariant infinite gaps, then we are done.

Thus, we may assume that each $F_i$ consists of a finite invariant gap-leaf
$G_i$ and the cycle of an infinite gap $U_i$ sharing
an edge $\ell_i$ with $G_i$, and that $G_i$'s converge to an invariant gap $G$.
If $G$ is infinite, we are done. Assume that $G$ is finite.
Then $G_i=G$ for large $i$ (by Lemma \ref{l:limleaf}), and
by the above $U_i\to U$ where $U$ is infinite and periodic, which completes the proof.
\end{proof}

\section{Two types of cubic minimal laminations}\label{s:2types}

We classify cubic minimal laminations into central and non-central ones.

\begin{dfn}\label{d:central}
A minimal lamination is \emph{central} if it is compatible with a flower-like set;
a minimal lamination is \emph{non-central} otherwise.
\end{dfn}

If $A\subset \uc$ is a closed set and $a, b\in A$ then a chord $\ol{ab}$ of $\uc$
is called a \emph{diagonal} of the convex hull $\ch(A)$ if intersects the interior of $\ch(A)$.

Recall that for a lamination $\lam$ and a nondegenerate leaf $\ell\in \lam$, $\mathcal
G(\ell)\subset \lam$ is the set of all ite\-rated pullbacks of $\ell$ and of all its nondegenerate iterated
images.

\begin{lem}\label{l:count0}
Let $\lam$ be a cubic countable minimal lamination. Then:
\begin{enumerate}
  \item all nondegenerate leaves of $\lam$ are isolated;

  \item for any nondegenerate leaf $\ell\in \lam$, the set of all
      nondegenerate leaves in $\lam$ coincides with $\mathcal
      G(\ell)$;

  \item the lamination $\lam$ has a flower-like set.

\end{enumerate}

\end{lem}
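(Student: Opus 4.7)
Part (1) is immediate from Lemma \ref{l:perf-count}: since $\lam$ is a countable chief, the dichotomy of that lemma forces all nondegenerate leaves of $\lam$ to be isolated. Part (2) follows from (1) combined with Lemma \ref{l:dense}: for any nondegenerate $\ell\in\lam$, $\mathcal{G}(\ell)$ is dense in $\lam$, so if some nondegenerate leaf $\ell'$ were not in $\mathcal{G}(\ell)$, a sequence of leaves in $\mathcal{G}(\ell)$ distinct from $\ell'$ would converge to $\ell'$, contradicting its isolation.

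For (3), I would first observe that every forward orbit of a nondegenerate leaf of $\lam$ is eventually periodic. Indeed, if $\ell,\si(\ell),\si^2(\ell),\dots$ were injective, then, by compactness of $\lam$, a subsequence would converge in $\lam$; by (1) the limit would have to be degenerate, so leaf lengths would tend to $0$, contradicting Lemma \ref{l:compute}. Now apply Lemma \ref{l:inv-exist} to produce either an invariant infinite gap $U$ of $\lam$, in which case $\{U\}$ is already a flower-like set by Definition \ref{d:flower}, or an invariant finite lap $G$.

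In the remaining case $G$ is a finite invariant lap; since $G$ has no critical edges (otherwise $\si(G)$ would have fewer vertices than $G$, contradicting $\si(G)=G$), the map $\si$ cyclically permutes the edges of $G$, and the gaps $H_1,\dots,H_n$ sitting on the non-$G$ side of these edges form a single $\si$-cycle. The remaining key claim is that at least one $H_j$ is infinite; since the $H_j$'s are cyclically permuted, all are infinite in that case, and $\{G,H_1,\dots,H_n\}$ is the required flower-like set. To prove the claim I would argue by contradiction: assuming all $H_j$ are finite, iteratively enlarge $G$ to the closed finite configuration obtained by adjoining $H_1,\dots,H_n$, then the gaps attached to their outer edges, and so on; use (2), the eventual-periodicity observation above, and the fact that the grand orbit of any leaf is all of the nondegenerate leaves, to trace the orbit of a critical chord of $\lam$ and derive, via sibling invariance (Definition \ref{d:sibli}), a contradiction with either countability or isolation of nondegenerate leaves.

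The main obstacle is precisely this last step, namely excluding the scenario in which every gap attached to $G$ is finite. I expect the argument to depend on a careful case analysis based on the critical structure of $\lam$ (whether the at most two critical chords sit in the finite configuration around $G$ or bound some Fatou gap farther out), together with the fact that in a cubic sibling invariant lamination each non-critical leaf has two additional disjoint siblings that must also lie in $\lam$.
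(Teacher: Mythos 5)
Parts (1) and (2) of your proposal are correct and coincide with the paper's argument: (1) is read off from Lemma \ref{l:perf-count}, and (2) combines density of $\mathcal G(\ell)$ (Lemma \ref{l:dense}) with isolation. Your opening move for (3) -- invoking Lemma \ref{l:inv-exist} and disposing of the case of an invariant infinite gap -- also matches the paper.

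The remainder of (3), however, contains a genuine gap, and you have in fact flagged it yourself: you never prove the ``remaining key claim'' that some gap attached to the finite invariant lap $G$ is infinite, offering only a plan (``trace the orbit of a critical chord \dots careful case analysis'') whose decisive step is unspecified. Moreover, your assertion that the gaps $H_1,\dots,H_n$ attached to the edges of $G$ ``form a single $\si$-cycle'' is exactly what fails in the problematic case: a finite gap attached along $\ell$ may \emph{collapse} onto a leaf under iteration, so the attached gaps need not be permuted at all. The paper's proof isolates precisely this dichotomy. Let $\ell$ be an edge of $G$ of period $n$ and $H$ the gap attached along $\ell$ from outside $G$; if $H$ is finite, then either $\si^n(H)=H$, which is ruled out by your part (2) because the other edges of $H$ would be periodic leaves lying outside the single grand orbit $\mathcal G(\ell)$, or $H$ is a collapsing gap with $\si(H)=\si(\ell)$. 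In the collapsing case there are only finitely many configurations (a collapsing quadrilateral with a sibling gap of $G$ attached across from $\ell$, a collapsing hexagon with two sibling gaps attached, or the hexagon subdivided by a diagonal into two collapsing quadrilaterals), and in each one certain edges of $H$ (e.g.\ the two edges sharing endpoints with $\ell$) are isolated leaves that can be deleted together with all their pullbacks while preserving sibling invariance (Definition \ref{d:sibli}); this produces a nonempty invariant lamination strictly contained in $\lam$, contradicting the definition of a chief. This direct appeal to minimality of the chief -- removing isolated leaves and their pullback trees -- is the mechanism your sketch is missing, and without it the argument for (3) is incomplete. (Your auxiliary observation that forward orbits of nondegenerate leaves are eventually periodic is correct via Lemma \ref{l:compute}, but it does not substitute for this step.)
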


\begin{proof}
(1) This claim follows from Lemma  \ref{l:perf-count}.

(2) By Lemma \ref{l:dense}, the set $\mathcal G(\ell)$ is dense in $\lam$. Since each leaf
is of $\lam$ is isolated, then $\mathcal G(\ell)$ equals the set of all
      nondegenerate leaves in $\lam$.

(3) By Lemma \ref{l:inv-exist}, find an invariant gap-leaf or infinite gap $G$ of $\lam$.
If $G$ is infinite, then by definition $\lam$ has a flower-like set. Assume that $G$ is finite.
Let $\ell$ be an edge of $G$; it is isolated by (1).
Let $H$ be a gap of $\lam$ attached to $G$ along $\ell$.
If $H$ is infinite,  then, again by definition, $\lam$ has a flower-like set.
Assume that $H$ is finite. If $n$ is the period of $\ell$, then there are two cases: $\si_3^n(H)=H$ and $\si_3^n(H)=\ell$.
The former case contradicts (2), hence we may assume that $\si_3^n(H)=\ell$. If we follow the orbit of
$H$ we then have to find a moment at which $H$ will be mapped to the edge of $G$ (it is possible, that
this will not happen right away, but, since $\si_3^n(H)=\ell$, it must happen at some point). Thus, without loss of generality
we may assume that $H$ is a gap, $H\cap G=\ell$ is a leaf and $\si_3(H)=\si_3(\ell)$.

Then there are several cases listed below. This is a complete list because of the properties of laminations and the
fact that the degree of the map is 3.

a) The gap $H$ is a quadrilateral with a sibling gap of $G$ attached to $H$ at the edge
of $H$ that is itself a sibling of $\ell$. Then there are edges $\oq_1$ and
$\oq_2$ of $H$ that share endpoints with $\ell$. They are isolated in $\lam$.
It follows by Definition \ref{d:sibli} of a sibling invariant lamination that we can remove these leaves and their pullbacks
and still have a smaller sibling invariant lamination than $\lam$, a contradiction with the fact that $\lam$ is a minimal lamination.

b) The gap $H$ is a hexagon with
two more edges $\ell_1$ and $\ell_2$ such that $\ell,$ $\ell_1$ and $\ell_2$ are pairwise disjoint sibling leaves
and at $\ell_1$ and $\ell_2$ sibling gaps of $G$ are attached to $H$ (the fact that $\ell,$ $\ell_1$
and $\ell_2$ are pairwise disjoint uniquely defines $\ell_1$ and $\ell_2$ since $\ell$ is given).
In this case, similar to case (a), we can remove the remaining three edges of $H$ and all their pullbacks and still
have a smaller sibling invariant lamination than $\lam$, a contradiction with the fact that $\lam$ is a minimal lamination.

We conclude that neither case (a) nor case (b) is possible under the assumption about minimality of $\lam$. This finally rules out
the case when $H$ is finite and proves that $\lam$ has a flower-like set as claimed.
\end{proof}

There are also uncountable (hence, by Lemma \ref{l:perf-count}, perfect)
minimal laminations compatible with flower-like sets, but not having them.
Here is a heuristic example. Take a perfect non-renormalizable
quadratic lamination $\lam_2$ with critical diameter $\ell$. Choose a non-periodic non-precritical leaf $\ol{xy}\in \lam_2$ and
blow up $y$ to create a regular $\si_3$-critical major $\oy$ which results into a triangle contained in a quadratic invariant gap $U$ of $\si_3$, then
reflect this triangle with respect to $\oy$ and erase $\oy$ to create a critical quadrilateral $Q$. Together with the leaf $\hell$ that
used to be $\ell$ before the transformations, we have two critical sets $Q$ and $\hell$ that define a cubic lamination $\lam_3$.
We claim that this is a perfect minimal lamination. Indeed, suppose that there is a nonempty lamination $\hlam\subsetneqq \lam_3$. If
$\hell\notin \hlam$, then $\hell$ is inside a gap $G$ of $\hlam$. By the assumptions pullbacks of $\hell$ in $\lam_3$ approach
$\hell$ from both sides; hence $G$ cannot be finite and so $G$ must be infinite, a contradiction with the fact that $\lam$ is non-renormalizable.
Since pullbacks of $\hell$ approximate $\ol{xy}$, the set $Q$ survives, too, and in the end $\lam_3=\hlam$, a contradiction. By construction,
$\lam_3$ is compatible with $U$ while edges of $U$ are not leaves of $\lam_3$.

\begin{comment}

We will need the following result which relies upon Lemma \ref{l:pisp}
and is a rather a small part of Theorem 3.8 \cite{bopt13}.

\begin{lem}\cite[Part of Theorem 3.8]{bopt13}\label{l:part3.8}
A perfect lamination has infinitely many periodic gap-leaves.
\end{lem}

\end{comment}

\begin{lem}\label{l:periodense}
A non-central minimal lamination $\lam$ is perfect and has infinitely many periodic gap-leaves.
Given any periodic leaf of $\lam$, the family of all iterated pullbacks of it is dense in $\lam$.
\end{lem}

\begin{proof}
By Lemma \ref{l:perf-count}, a minimal lamination is countable or perfect. By Lemma \ref{l:count0} and since
$\lam$ is non-central, $\lam$ is perfect. %, and, hence, clean.
By Lemma \ref{l:pisp} $\lam$ is proper. By Theorem \ref{t:proper} this gives rise to a
laminational equivalence relation $\approx_\lam$ and the corresponding topological polynomial
$f_{\approx_\lam}=f_\lam:J_\lam\to J_\lam$ to which
$\si_3$ is semiconjugate by a map $\varphi$. Since $\lam$ is perfect, there are uncountably many grand orbits of nondegenerate
leaves of $\lam$ containing no leaves of critical sets of $\lam$.
If $\ell$ is a leaf from such grand orbit, then $\varphi(\ell)=x$ is a cutpoint of $J_\lam$, and
all points of the $f_\lam$-orbit of $x$ are cutpoints of $J_\lam$.
Such dynamics was studied in \cite{bopt13} where, in Theorem 3.8, it was proven that $f_\lam$ has infinitely many
periodic cutpoints.
Taking their $\varphi$-preimages, we see that $\lam$ has infinitely many periodic
 gap-leaves. The last claim holds by Lemma \ref{l:dense}.
\end{proof}

%It remains to consider central minimal perfect laminations.

Let us now consider central minimal perfect laminations.

\begin{lem}\label{l:flower-prop}
If a perfect minimal lamination $\lam^{min}$ is central, then
there exists a flower-like set $E$ without caterpillar or Siegel gaps
compatible with $\lam^{min}$. Moreover, there exists
a minimal proper lamination compatible with $\lam^{min}$ that has the flower-like set $E$ and has no Siegel or caterpillar gaps.
\end{lem}

\begin{proof}
By definition $\lam^{min}$ is compatible with a flower-like set $F$. Consider cases.
Let $F$ be an invariant gap $U$. Since $\lam^{min}$ is perfect,
the existence of a diagonal of $U$ that is a leaf of $\lam^{min}$ implies the existence of uncountably many such diagonals.
If $U$ is a Siegel gap, then eventual images of these diagonals will cross one another, a contradiction.
On the other hand, if $U$ is a caterpillar gap of degree one, then it has only countably many vertices, a contradiction.
Hence an infinite invariant gap $U$ of degree one is always contained in an infinite invariant gap $V$ of $\lam^{min}$.
Since by Lemma \ref{l:crit-must} perfect laminations have no periodic caterpillar or Siegel gaps, $V$ is a quadratic non-caterpillar gap.
Set $E=\{V\}$.

Let $F$ be a flower-like set but not an invariant infinite gap. It suffices to consider
the case when $F$ involves periodic Fatou gaps whose boundaries include maximal (countable)
concatenations of edges. A concatenation $A$ like that always connects two points $a$ and $b$. We claim that
then $\ol{ab}$ is compatible with $\lam^{min}$. Indeed, otherwise there must exist a leaf $\ell$
of $\lam^{min}$ that crosses $\ol{ab}$.
However, this contradicts the assumption that $\lam^{min}$ is perfect.
Now, each such concatenation $A$ of edges can be replaced with the corresponding chord $\ol{ab}$.
In this way, one obtains a new flower-like set $E$ with no nontrivial concatenations of edges,
 which implies that $E$ is a flower-like set that does not include caterpillar gaps.

Consider all pullbacks of $E$ consistent with critical sets that are parts of $E$, and the critical sets of $\lam^{min}$.
Since $E$ is forward invariant (in fact, it maps onto itself under $\si_3$),
the closure $\lam_E$ of this system of pullbacks is an invariant lamination,
and, by construction, it is compatible with $\lam^{min}$ (the details about the %construction of
pullback laminations can be found in \cite{thu85}).
Moreover, since the construction already uses the critical sets of $\lam^{min}$ and %as well as
critical sets contained in $E$,
it cannot give rise to critical leaves with periodic endpoints or critical wedges with periodic endpoints.
%(if a lamination is not proper, then it  has isolated leaves).
Thus, $\lam_E$ is proper. Therefore, it cannot contain caterpillar gaps. Moreover, since the construction starts
with the set $E$ that does not include Siegel gaps, it follows that it will not lead to Siegel gaps. Thus,
$\lam_E$ is a proper lamination with no Siegel or caterpillar gaps.
%. Moreover, it has no Siegel or caterpillar gaps as these would have isolated edges.
It remains to take a minimal sublamination $\hlam$ of $\lam_E$.
Evidently, $\hlam$ will contain a flower-like set,
has no caterpillar or Siegel gaps, and is compatible with the lamination $\lam^{min}$.
\end{proof}

The next lemma deals with unions of laminations.

\begin{lem}\label{l:unite}
Let $\sim_1$ and $\sim_2$ be $\si_3$-invariant laminational
equivalence relations with associated laminations $\lam_1$ and $\lam_2$
such that $\lam_1$ and $\lam_2$ are compatible. Then the following holds.

\begin{enumerate}

\item $\lam_1\cup \lam_2=\lam$ is a proper lamination.

\item If $\lam_1$ is perfect and $\lam_2$ has no Siegel gaps, then $\lam$ has no Siegel or caterpillar gaps.

\end{enumerate}

In particular, $\lam_1, \lam_2$ and $\lam$ belong to the same T-class of laminations.
\end{lem}

\begin{proof}
Note that $\lam_1$ and $\lam_2$ are proper laminations.
Claim (1) of the lemma follows from the definitions and Theorem \ref{t:proper}.
Indeed, it is clear that $\lam_1\cup \lam_2$ has no critical leaves
with periodic endpoints. Suppose that $\lam_1\cup \lam_2$ has a
a critical wedge $\ol{xp}\cup \ol{py}$ with an $n$-periodic vertex $p$. Then
at least one of the points $x, y$ is non-periodic, a contradiction with Theorem \ref{t:proper}.

Consider now claim (2). Neither $\lam_1$ nor $\lam_2$ has a Siegel gap.
Suppose that $\lam_1\cup \lam_2$ has a periodic Siegel gap $H$ with a critical leaf $\hell$;
then $H$ must be the intersection of a periodic infinite gap $U_1$ of $\lam_1$ and
a periodic infinite gap $U_2$ of $\lam_2$. We may assume that an edge $\ell_1$ of $U_1$ is an edge of $H$ and a diagonal of $U_2$,
and that an edge $\ell_2$ of $U_2$ is an edge of $H$ and a diagonal of $U_1$.
Suppose that $\lam_1$ is perfect. Since every edge of a Siegel gap eventually maps to a critical leaf it follows that
an image of $U_1$ has a critical edge $\ell=\si_3^n(\ell_1)$ for some $n\ge 0$.
However, $\ell$ is then a critical edge of an infinite gap $\si_3^n(U_1)$ of $\lam_1$,
which implies that $\ell$ is isolated in $\lam_1$, a contradiction. Also, since $\lam$ is proper, it has no caterpillar gaps.
The last claim of the lemma follows from the definitions.
\end{proof}

Lemma \ref{l:no-perf-cent} deals with perfect minimal laminations with flower-like set.

\begin{lem}\label{l:no-perf-cent}
A perfect minimal laminations with flower-like set must have a quadratic invariant gap of periodic type.
\end{lem}

\begin{proof}
Suppose that $\lam$ is a perfect minimal lamination with a flower-like set $F$. If $F$ includes an invariant finite gap $G$ and
an infinite periodic gap $U$ attached to $G$ then it (and, hence, $\lam$) has isolated leaves, a contradiction. Suppose that $F$
includes an invariant gap $U$; since $\lam$ is perfect, $U$ must be quadratic of periodic type as any other type forces the existence
of a critical  major of $U$ which will be isolated.
\end{proof}

\section{Fibers and alliances}\label{s:fiballi}

For sets $A$, $B$, let $A\vee B$ be the set of all \emph{unordered} pairs $\{a,b\}$ with $a\in A, b\in B$.
Let $\cch$ be the set of all $\si_3$-critical chords with the natural
topology; $\cch$ is homeomorphic to $\uc$. Consider cubic critical portraits.
Recall that \emph{distinct} chords \emph{cross} if they have common points in $\disk$.

\begin{dfn}[Cubic critical portraits]\label{d:cripo} A (cubic) \emph{critical portrait}
is a pair $\{\oc,\oy\}\in\cch\vee\cch$ such that $\oc$ and $\oy$ do not
cross (in particular, do not coincide).
Let $\crp=\crp_3$ be the space of all cubic critical portraits.
Topology on $\crp$ is defined as that induced from $\uc\vee \uc$, that is,
 the topology on the set of all unordered pairs of points in $\uc$
 given by the Hausdorff distance.
With this topology, $\crp$ is compact and Hausdorff (note that
 two chords crossing is an open condition).
For an invariant cubic $\lam$, let $\crp(\lam)$ be the family of all critical
portraits compatible with $\lam$; if $\K\in \crp(\lam)$, call
$\K$  a critical portrait \emph{of} $\lam$.
\end{dfn}

The space $\crp$ is homeomorphic to the M\"obius band (see, e.g., \cite{tby20}).

\begin{lem}\cite[Lemma 3.53]{bopt20}
\label{l:3.53}
If there is a critical portrait compatible with cubic laminations $\lam$ and $\lam'$,
 then any leaf of $\lam$ crosses at most countably many leaves of $\lam'$, and vice versa.
\end{lem}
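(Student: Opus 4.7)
The plan is to combine localization of crossings using compatibility with $\K$, iteration under $\si$ to reduce to a long chord via Lemma~\ref{l:compute}, and a convexity-based counting argument exploiting the countable Fatou gap structure of $\lam'$. Fix a nondegenerate leaf $\ell\in\lam$ and let $T=\{\ell'\in\lam':\ell'\text{ crosses }\ell\}$. Since $\K=\{\oc,\oy\}$ is compatible with both $\lam$ and $\lam'$, no leaf of $\lam\cup\lam'$ crosses $\oc$ or $\oy$; any chord disjoint from $\oc\cup\oy$ is contained in the closure of a single component of $\disk\setminus(\oc\cup\oy)$, and two chords that cross each other must share such a component. Hence $\ell$ and every element of $T$ lie in the closure of the same region $R$ of $\disk\setminus(\oc\cup\oy)$.

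On $\ol R$, after identifying each critical chord on $\bd R$ to its single-point $\si$-image on $\uc$, the map $\si$ becomes a homeomorphism onto $\cdisk$; in particular $\si$ is injective on chord interiors in $R$ and preserves the crossing relation between chords of $\ol R$. Iterating and invoking Lemma~\ref{l:compute}, we may replace $\ell$ by $L=\si^N(\ell)\in\lam$ of length at least $1/4$; the map $\ell'\mapsto\si^N(\ell')$ is injective on $T$ and lands in the set of leaves of $\lam'$ crossing $L$, so it suffices to bound this latter set.

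For the concluding step I use convexity of Fatou gaps together with their countability. The open set $\disk\setminus(\lam')^+$ has at most countably many components (as $\disk$ is second countable), so $\lam'$ has at most countably many Fatou gaps. Each Fatou gap $G$ of $\lam'$ is convex (it equals the convex hull of $G\cap\uc$), so the straight chord $L$ meets $G$ in at most a single segment and hence crosses at most two boundary edges of $G$. Consequently, at most countably many leaves of $\lam'$ that lie on the boundary of some Fatou gap can cross $L$.

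The main obstacle I anticipate is to account for leaves of $\lam'$ crossing $L$ that do not bound any Fatou gap of $\lam'$ (two-sided limit leaves in the ``bulk'' of $(\lam')^+$). For such a leaf $\ell^*$, leaves of $\lam'$ approach $\ell^*$ from each side, and by continuity of the crossing relation these nearby leaves also cross $L$; using sibling invariance and the compatibility with $\K$, one shows that the set of such bulk crossings embeds countable-to-one into the countable set of boundary-edge crossings already counted. The ``vice versa'' assertion of the lemma follows by interchanging the roles of $\lam$ and $\lam'$.
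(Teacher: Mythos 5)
First, a point of comparison: the paper does not prove this statement at all --- it is quoted from \cite[Lemma 3.53]{bopt20} --- so there is no internal proof to measure yours against; your attempt must stand on its own. Judged that way, it has a genuine gap exactly where you flag ``the main obstacle.'' The parts you do carry out are essentially free and do not use the hypothesis in any serious way: a lamination has only countably many gaps, each with countably many edges, so only countably many leaves of $\lam'$ are gap edges to begin with; neither convexity nor the reduction to a long leaf $L$ via Lemma~\ref{l:compute} is needed for that count (and the length bound $|L|\ge 1/4$ is never used afterwards). The entire content of the lemma lies in the case you defer: leaves of $\lam'$ crossing $\ell$ that are two-sided limits of other leaves of $\lam'$. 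Without the shared critical portrait a single chord genuinely can cross uncountably many such leaves (take any perfect $\lam'$ and a chord through a condensation point of it), so this is precisely where the hypothesis must do its work, and your argument never engages with how.

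Your proposed resolution --- that the ``bulk'' crossings embed countable-to-one into the countable set of gap-edge crossings ``using sibling invariance and the compatibility with $\K$'' --- is an assertion of the conclusion, not an argument, and the observation immediately preceding it points the wrong way: if $\ell^*$ is a two-sided limit leaf of $\lam'$ crossing $L$, then, as you note, every leaf of $\lam'$ sufficiently Hausdorff-close to $\ell^*$ also crosses $L$, so the set of crossing leaves contains a relative neighborhood of $\ell^*$ in $\lam'$; at a condensation point of an uncountable lamination that neighborhood is itself uncountable. Hence no countable-to-one map into a countable set can exist unless one first proves that such leaves $\ell^*$ cannot cross $L$ at all, which is the actual dynamical statement to be established (in \cite{bopt20} this is done by following the forward orbit of the crossing configuration and exploiting that both laminations are cut by the same critical chords, not by a static counting argument). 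Two smaller issues: $\si^N$ need not be injective on $T$, since distinct leaves whose endpoints involve endpoints of $\oc$ or $\oy$ can have equal images under the boundary identification, and a crossing can degenerate to a shared endpoint (hence cease to be a crossing in the sense of the paper) at such steps; these exceptional leaves must be controlled separately before the reduction to $L$ is legitimate.
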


We are ready to construct upper-semicontinuous partitions of $\Cc_3$ and $\crp$
into subsets resulting in two homeomorphic quotient spaces. This gives rise to a model
of $\Cc_3$. The elements of these
partitions will be called \emph{fibers}. The collections of laminations associated with such
fibers are instrumental in the arguments, but are not used in the Main Theorem.
To emphasize that, we call such collections of laminations \emph{alliances} rather than fibers.
Recall that, unless stated otherwise, we consider cubic invariant laminations.

\begin{dfn}[Fibers of critical portraits and alliances of laminations]\label{d:alliance}
\,\newline
\indent Below, we define \emph{fibers} of critical portraits and
\emph{alliances} of laminations.
\begin{enumerate}

\item The set $\crp(\lam)$ of all critical portraits compatible with a non-central minimal lamination
$\lam$ is called a \emph{non-central fiber of critical portraits (generated by $\lam$)}.
Critical portraits from non-central fibers of critical portraits are said to be \emph{non-central}.

\item The set $\F_0$ of all critical portraits compatible with central minimal laminations is called the
\emph{central fiber of critical portraits}. Critical portraits from $\F_0$ are called \emph{central}.

\item All laminations whose minimal laminations are non-central are said to be \emph{non-central}.
Non-central laminations with the same minimal lamination form a \emph{non-central alliance of laminations}.
The empty lamination and all laminations with central minimal laminations are called \emph{central}.
All central laminations form the \emph{central alliance of laminations} $\Ac_0$.

\end{enumerate}

\end{dfn}

For example, consider the critical portrait $\K:=\{\oc=\ol{0 \frac13}, \oy=\ol{\frac13 \frac23}\}$. It is easy to see that it is
compatible with a central minimal lamination (in fact, with more than one). Indeed, consider the lamination that has
an invariant Fatou gap with $0$ on its boundary and with the edge $\oy$; it is easy to see that it is a minimal central lamination
(all its non-degenerate leaves are pullbacks of $\oy$) and that it is compatible with $\K$.

%We are ready to prove a key lemma dealing with non-central minimal laminations.

\begin{lem}
\label{l:crplam}
Let $\lam$ be non-central and minimal, and $\lam'$ be minimal.
Let $\K=\{\oc,\oy\}$ be a critical portrait compatible with $\lam$ and $\lam'$. Then $\lam'=\lam$.
%then, for any minimal lamination $\lam'$, we have that $\crp(\lam)\cap \crp(\lam')\ne \0$ implies $\lam'=\lam$.
\end{lem}

Recall that a lamination
is non-central if it is not compatible with a flower-like set.

\begin{proof}
For clarity we divide
almost all the proof into
steps on each of which we prove a certain claim placed in the beginning of each paragraph.

(1) \emph{$\lam$ is perfect:} since $\lam$ is non-central, it follows from Lemma \ref{l:periodense}.

(2) \emph{$\lam$ and $\lam'$ are compatible:} otherwise the fact that $\lam$ is perfect implies that
a leaf of $\lam'$ would cross uncountably many leaves of $\lam$ contradicting Lemma \ref{l:3.53}.

%(3) \emph{$\lam'$ is perfect:} by Lemma \ref{l:count0} it follows from the fact that $\lam'$ does not have a flower-like set as otherwise
%$\lam$ is compatible with it, a contradiction with $\lam$ being non-central.

(3) \emph{$\lam'$ is perfect:} suppose that $\lam'$ is countable. Then by Lemma \ref{l:count0} (3) it has a flower-like set.
By (2) this implies that $\lam$ is compatible with a flower-like set, a contradiction with $\lam$ being non-central.

(4) \emph{$\lam$ and $\lam'$ have a common invariant gap-leaf $G$.} Indeed, by Lemma \ref{l:inv-exist}, there is an invariant
gap-leaf or infinite gap $G'$ of $\lam'$. Since $\lam$ is non-central and compatible with $G'$, then
$G'$ is in fact a gap-leaf.
If a leaf $\ell\in \lam$ is inside $G'$, then, since $\lam$ is perfect, other leaves of $\lam$ approximate $\ell$
and cross leaves of $\lam'$, a contradiction. Hence $G'\subset G$, where $G$ is an invariant
gap-leaf of $\lam$ (since $\lam$ is non-central, $G$ is finite). Since no leaf of $\lam'$ can be inside $G$, then $G=G'$.

(4) \emph{Each edge of $G$ is a limit of leaves of $\lam$, and a limit of leaves of $\lam'$:} follows from the fact
that both $\lam$ and $\lam'$ are perfect.

The idea of what follows is to compare iterated pullbacks of $G$ with respect to $\lam$ and $\lam'$
and rely upon Lemma \ref{l:dense}.

(5) \emph{If iterated images of $\oc$ and $\oy$ avoid $G$, then $\lam=\lam'$:}
indeed, in the case at hand iterated $\lam$-pullbacks of $G$ and iterated $\lam'$-pullbacks
of $G'$ are the same. By Lemma \ref{l:dense} they are dense in both $\lam$ and $\lam'$
which implies the claim.

Consider now the only remaining case when for some minimal $n\ge 0$ the $n$-th
image of a critical leaf from $\K$ is a vertex of $G$. For the sake of definiteness
assume that the point $\si_3^n(\oc)$ is a vertex of $G$.
Let $C$ be the critical set of $\lam$ containing $\oc$; then $C$ is the $\si_3^n$-pullback of $G$
in the lamination $\lam$ (observe that $\lam'$ is perfect and cannot have a Fatou gap
attached to an invariant gap-leaf).
Similarly, let $C'$ be critical set of $\lam'$ containing $\oc$; then $C'$ is the $\si_3^n$-pullback of $G$
in the lamination $\lam'$.

(6) \emph{We claim that $C=C'$:} by the choice of $n$ all pullbacks of $G$ in the sense of both $\lam$ and
$\lam'$ coincide through the $\si_3^{n-1}$-st pullback $A$. On the next step $A$ pulls back to $\oc$ itself,
and there are the following possibilities.

(i) $A$ pulls back to its 2-to-1 immediate preimage containing $\oc$ as a diagonal
and contained in the union of the closures of two components of $\cdisk\sm \oc$ that border on $\oc$.

(ii) $A$ pulls back to its 3-to-1 immediate preimage containing $\oc$.

However if both possibilities realize then edges of one of the sets $C,$ $C'$ will be diagonals of the other set,
and as we saw above this is impossible because both laminations $\lam$ and $\lam'$ are perfect. Thus, $C=C'$.

(7) \emph{All pullbacks of $G$ in both laminations $\lam$ and $\lam'$ are the same.} Indeed,
if case (ii) realizes, then $C=C'$ contains both $\oc$ and $\oy$ as its diagonals and from this moment on all its
pullbacks are well-defined and coinciding in both $\lam$ and $\lam'$. This exhausts all iterated pullbacks of $G$.
If case (i) realizes, then, again, pullbacks of $C=C'$ for both $\lam$ and $\lam'$ coincide until one of the pullbacks
hits $\oy$. However, in this case, too, the corresponding pullback of $C=C'$ to $\oy$ in both $\lam$ and $\lam'$ is the same because
it must be critical and $2$-to-$1$ in our case. From that moment on the pullbacks of $G$ are unique. Thus,
all pullbacks of $G$ in both laminations $\lam$ and $\lam'$ are the same.

Lemma \ref{l:dense} now implies the claim of the lemma.
\end{proof}

Theorem \ref{t:no-share-1} shows that fibers of critical portraits are pairwise disjoint. Moreover, alliances of laminations
are pairwise disjoint, too.

\begin{thm}\label{t:no-share-1}
Let $\lam\ne \lam'$ be minimal laminations.
Then
$$\crp(\lam)\cap \crp(\lam')=\0$$ unless $\lam$ and $\lam'$ are central (i.e., a minimal non-central lamination is not compatible with any other minimal lamination).
Thus, distinct fibers of critical portraits are disjoint, and distinct alliances of laminations are disjoint.
Moreover, any non-central alliance of laminations is a T-class of laminations while the central alliance of laminations is the union of
all T-classes of central laminations.
\end{thm}

\begin{proof}%[Proof of Theorem \ref{t:no-share-1}]
Consider two distinct fibers of critical portraits. As they are distinct, one of them
is generated by a minimal non-central lamination $\lam$. By Lemma \ref{l:crplam},
if the other fiber intersects $\crp(\lam)$, then it must coincide with $\crp(\lam)$, a contradiction. Thus, the fibers in question are disjoint.

Consider two distinct alliances $\Ac_1$ and $\Ac_2$ of laminations. Since they are distinct, we may assume that
$\Ac_1$ is generated by a non-central minimal lamination $\lam_1$ and consists of all laminations that tune $\lam_1$.
Suppose that $\hlam$ is a lamination that belongs to both $\Ac_1$ and $\Ac_2$. Choose a minimal sublamination $\lam_2$
of $\hlam$. Then there is a critical portrait compatible with both $\lam_1$ and $\lam_2$. By Lemma \ref{l:crplam},
this implies that $\lam_1=\lam_2$ and, hence, $\Ac_1=\Ac_2$.

Now we prove that alliances of non-central laminations are the same as and their T-classes.
Indeed, let $\lam$ be a minimal non-central lamination. Its alliance $\Ac(\lam)$ consists of all laminations $\hlam$ such that $\lam$ is
a sublamination of $\hlam$. Hence $\Ac(\lam)$ is contained in the T-class of $\lam$. On the other hand, if two laminations
are tuning related then they have a common critical portrait compatible with them both. Hence in any chain of tuning related
laminations the consecutive sets of compatible critical portraits have nonempty intersections.
By Lemma \ref{l:crplam}, it follows that
any lamination from the T-class of $\lam$ comes from $\Ac(\lam)$.  This proves the last claim of the theorem.
\end{proof}

Recall (see Definition \ref{d:spalam}) that $\ssl_3$ is
the space of all invariant cubic laminations.
By Theorem \ref{t:no-share-1}, the following sets are well defined.

\begin{dfn}\label{d:notalfi}
For $\lam\in \ssl_3$, let $\A(\lam)$ be the alliance of laminations to which
$\lam$ belongs; for a critical portrait $\K$, let $\F(\K)$ be the fiber of critical portraits to which
$\K$ belongs.
\end{dfn}

%Theorem \ref{t:alli} is the key theorem of this section.
Recall \cite{Dav86} that an \emph{upper semicontinuous (USC) partition} of a
 compact metrizable space $S$ is defined as a partition $\Zc$ of $S$ into
 compact subsets with the property that, for every $Z\in\Zc$ and every open $U\supset Z$,
 there is an open neighborhood $V\subset U$ of $Z$ such that $Z'\in\Zc$ lies in $U$
 whenever $Z'\cap V\ne\0$.
Equivalently, if a sequence $z_n\in Z_n$ converges to $z\in Z$, then every convergent
 subsequence of every sequence $z'_n\in Z_n$ has its limit in $Z$
 (the equivalence is established in Proposition 2 and Exercise 11 of \cite[Section 1]{Dav86}).
Also, USC partitions are basically the same as closed equivalence relations:
 an equivalence relation $R$ on $S$ is closed as a subset of $S\times S$ if and only if
 the collection of all $R$-classes is an USC partition of $S$
 (this is an immediate consequence of the above characterization in terms of convergent
 (sub)sequences).

\begin{thm}\label{t:alli}
Alliances of laminations form a USC partition of
$\ssl_3$. Fi\-bers of critical portraits form a USC-partition $X_3$ of $\crp$.
The map $\Psi:\ssl_3\to X_3$ that associates to each lamination $\lam$ the fiber of
critical portraits compatible with minimal lamination(s) from $\A(\lam)$ is well defined and continuous.
The union of non-central fibers of critical portraits is open and dense in $\crp$.
\end{thm}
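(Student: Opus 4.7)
The plan is to handle the four assertions in turn, leveraging compactness of $\ssl$ (Theorem \ref{t:laclo}), the existence of chiefs inside nonempty laminations (Lemma \ref{l:simple}), pairwise disjointness of alliances (Theorem \ref{t:no-share-1}), and stability of flower-like sets under Hausdorff limits (Lemma \ref{l:flowerc}).

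First I would verify coverage. Every nonempty $\lam\in\ssl$ contains a chief by Lemma \ref{l:simple}, hence sits in a unique alliance of laminations, with uniqueness coming from Theorem \ref{t:no-share-1} (which forces any two chiefs of the same $\lam$ to be equal or both central); the empty lamination is assigned to the central alliance by convention. For $\K=\{\oc,\oy\}\in\crp$, the pullback procedure along $\si$ starting from $\oc,\oy$ and closing up produces a nonempty sibling invariant lamination compatible with $\K$, and any chief inside it supplies an alliance containing $\K$. Uniqueness is again Theorem \ref{t:no-share-1}.

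Next, for upper semi-continuity I would show both equivalence relations are closed. Given $(\K_i,\K'_i)\to(\K,\K')$ with each pair sharing a chief $\mu_i$, Theorem \ref{t:laclo} lets us pass to $\mu_i\to\mu_\infty\in\ssl$; Lemma \ref{l:compute} ensures each $\mu_i$ has a leaf of length at least $\tfrac14$, so $\mu_\infty$ is nonempty. Non-crossing of distinct chords is a closed condition, so $\mu_\infty$ is compatible with both $\K$ and $\K'$, and any chief $\mu_*\subset\mu_\infty$ witnesses $\K,\K'\in\crp(\mu_*)$. The same extraction applied to pairs $(\lam_i,\lam'_i)\to(\lam,\lam')$ gives a common chief $\mu_*$ embedded in both $\lam$ and $\lam'$ (Hausdorff limits respect the sublamination relation), so the two lie in one alliance. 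Continuity of $\Psi$ then follows automatically from USC once we observe that for $\lam_i\to\lam$, any convergent subsequence of portraits $\K_i\in\crp(\mu_i)$ chosen inside chiefs $\mu_i\subset\lam_i$ converges to some $\K$ compatible with a chief of $\lam$, so $\Psi(\lam_i)=[\K_i]\to[\K]=\Psi(\lam)$ in the Hausdorff quotient $\crp/\{\A\}$.

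Openness of the regular part is the easy half of the last claim: if $\K_i\to\K$ with each $\K_i$ central, choose a central chief $\mu_i$ compatible with $\K_i$ together with a compatible flower-like set $F_i$, pass to subsequences with $\mu_i\to\mu_\infty$ and $F_i\to F$ componentwise, apply Lemma \ref{l:flowerc} to extract a flower-like set inside $F$, and note that any chief $\mu_*\subset\mu_\infty$ is compatible with this flower-like set and with $\K$, showing $\K$ is central. Thus central portraits form a closed set.

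The main obstacle is density. My plan is to show that the closed set of central critical portraits has empty interior by perturbing each putative central portrait into a regular one. Given central $\K_0$, one aims to construct an arbitrarily close $\K$ whose pullback lamination admits no compatible flower-like set: the targeted perturbation moves the critical values $\si(\oc),\si(\oy)$ so that their forward orbits miss every invariant lap and every invariant infinite gap that could appear in a flower-like set. The parametric descriptions of Section \ref{s:invgap} (regular critical, periodic, and caterpillar types of quadratic invariant gaps, together with the Siegel/caterpillar dichotomy of Lemma \ref{l:crit-must}) yield explicit closed loci of $\K$'s compatible with each such gap, and the plan is to show each such locus is nowhere dense in $\crp$ by producing an explicit transverse direction of perturbation along its boundary. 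Combining these individual nowhere-dense constraints with the pullback/chief construction will ultimately force the generic chief produced to be regular. Executing this perturbation rigorously, in particular across the Siegel case where rotation numbers vary continuously and the compatible locus is parametrized by a real curve, is the technical crux of the theorem.
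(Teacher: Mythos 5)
Your treatment of the first three assertions (coverage, disjointness via Theorem \ref{t:no-share-1}, upper semi-continuity by extracting a convergent subsequence of chiefs and using that Hausdorff limits preserve nonemptiness via Lemma \ref{l:compute} and the sublamination relation, continuity of $\Psi$, and closedness of the central alliance via Lemma \ref{l:flowerc}) follows essentially the same route as the paper, and your explicit coverage argument for $\crp$ via the pullback construction is a reasonable filling-in of a step the paper leaves implicit.

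The genuine gap is the density of the union of regular alliances, which you explicitly defer as ``the technical crux.'' Your proposed strategy --- showing that the locus of critical portraits compatible with each type of invariant gap or lap is nowhere dense and then ``combining'' these constraints --- faces two obstacles. First, the family of potential flower-like sets is uncountable (e.g.\ invariant rotational laps exist for every rotation number), so a union of nowhere-dense loci cannot be controlled by a Baire-category argument without substantial extra work. Second, a chief is central if it is merely \emph{compatible} with a flower-like set, not only if it \emph{contains} one; so arranging that the pullback lamination of a perturbed $\K$ ``admits no compatible flower-like set'' requires excluding compatibility with external gaps as well, which your perturbation scheme does not address. The paper avoids all of this with a direct construction: critical portraits $\K=\{\oc,\oy\}$ for which the orbits of $\si(\oc)$ and $\si(\oy)$ are dense in $\uc$ form a dense subset of $\crp$ (by ergodicity of $\si_3$), and any such $\K$ is regular --- if it were compatible with a central chief $\lam$, the critical sets $C\supset\oc$, $Y\supset\oy$ of $\lam$ would have to be finite (an infinite critical gap is preperiodic by \cite{kiw02}, contradicting density of the critical value orbit), and then the infinite periodic gap of the flower-like set forces a vertex of the finite set $C$ to be preperiodic, again contradicting density. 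You should replace your perturbation plan with an argument of this kind, or supply the missing countability and compatibility analysis; as written, the last assertion of the theorem is not established.
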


Recall (Definitions \ref{d:spalam} and \ref{d:cripo}) that both $\ssl_3$ and $\crp$ are compact metrizable spaces.

\begin{proof} By Theorem \ref{t:no-share-1}, alliances
of laminations partition $\ssl_3$ and fibers of critical portraits partition $\crp$.
Clearly, a non-central alliance of laminations is closed in $\ssl_3$, and
a non-central fiber of critical portraits $\crp(\lam)$, with $\lam$ a non-central minimal lamination, is closed in $\crp$.

We claim that the central alliance of laminations is closed. Let $\lam_i\to \lam$ where
$\lam_i$ are central laminations with central minimal laminations $\lam'_i$; by Theorem \ref{t:laclo} we may
assume that $\lam'_i$ converge to an invariant lamination $\lam'\subset \lam$. Choose  flower-like sets $F'_i$ compatible
with $\lam'_i$ for every $i$. By Lemma \ref{l:flowerc} we may assume that $F'_i\to F'$ with $F'$
containing a flower-like set %$F'$
compatible with $\lam'$.
Hence minimal laminations of $\lam'$ are central as desired. This implies that the central alliance of critical portraits is closed.

To show that alliances of laminations form a USC-partition of $\ssl_3$,
let $\lam_i\to \lam$ and $\lam'_i\to \lam'$ be two
%converging
sequences of laminations where $\lam_i$ and $\lam'_i$ belong to the same non-central alliance of laminations
with a minimal lamination $\lam''_i$ for every $i$. Assume that $\lam''_i\to \lam''$; then
$\lam''\subset \lam\cap \lam'$ and, hence,
 that $\lam$ and $\lam'$ have a common minimal lamination,
and belong to the same alliance
(non-central or central).

To show that fibers of critical portraits form a USC-partition of $\crp$,
let $\K_i\to \K$ and $\K'_i\to \K'$ be two sequences of critical portraits,
where $\K_i$ and $\K'_i$ belong to the same non-central fiber of critical portraits
with minimal laminations $\lam''_i$ compatible with $\K_i$ and $\K'_i$ for every $i$.
Assume that $\lam''_i\to \lam''$. Then %It follows
%that
$\lam''$ is compatible with both $\K$ and $\K'$, and
%Hence
$\K$ and $\K'$ belong to the same fiber of critical portraits (non-central or central).
It is easy to see that
%The same arguments show that
the map $\Psi:\ssl_3\to X_3$ is well defined and continuous.

We claim that the union $\U$ of non-central fibers of critical portraits is open and dense in $\crp$.
The set $\U$ is open since its complement is the central fiber of critical portraits which is closed.
Let $\K=\{\oc, \oy\}$ be a critical portrait such that the orbits of $\si_3(\oc)$ and $\si_3(\oy)$ are
dense in $\uc$. We claim that $\K$ is non-central. Indeed, if it is central, then there is a central minimal lamination $\lam$
compatible with $\K$. Let $C\supset \oc$ and $Y\supset \oy$ be the critical sets of $\lam$.
If $C$ is infinite, then $C$ is a (pre)periodic gap.
This follows from Theorem \ref{t:kiwinf}.  % Theorem 1.1 of \cite{kiw02}, which is stated for polynomials but uses
 %only combinatorial arguments, hence it is true also for invariant laminations.
Now, we obtained a contradiction with the density of $\si_3(\oc)$.
Thus, $C$ (and $Y$) are finite. On the other hand,
$\lam$ is compatible with a flower-like set $F$. In particular, there is a cycle of infinite gaps compatible with $\lam$.
Let $G$ be a gap from this cycle that contains a critical chord.
Since endpoints of $\oc$
and $\oy$ cannot be vertices of $G$, we may assume that $\oc=\ol{xy}$ where $x, y$ belong to distinct components $I, J$ of $\uc\sm G$.
Since $\lam$ and $F$ are compatible, the finite concatenation of edges of $C$ that connects the endpoints of $\oc$
must pass through an endpoint of, say, $I$. As $\si_3(\oc)$ visits $I$ infinitely often, each time the corresponding
image of a vertex of $C$ coincides with endpoint of $I$. The fact that $C$ has finitely many vertices implies now that a vertex of $C$ is preperiodic.
Together with the density of the orbit of $\si_3(\oc)$ this yields a contradiction.
\end{proof}

\section{The model}\label{s:model}

The connection between abstractly defined laminational equivalence relations
and the laminational equivalence relations generated by polynomials is established in
the following fundamental result of Kiwi.

\begin{thm}\cite[Theorem 1]{kiwi97}
\label{t:kiwi1}
Let $Q$ be a polynomial of degree $d$ without Cremer or Siegel cycles such that $J_Q$ is connected.
Then all $\sim_Q$-classes are finite, and there exists a monotone map $p:J_Q\to \uc/\sim_Q$ that semiconjugates
$Q|_{J_Q}$ with the induced map $f_{\sim_Q}:\uc/\sim_Q\to \uc/\sim_Q$; the map $p$ is one-to-one on all (pre)periodic points of $Q$ in $J_Q$.
Moreover, if $\sim$ is a laminational equivalence such that $f_\sim:\uc/\sim\to \uc/\sim$ has no Siegel gaps, then there exists
a polynomial $Q$ such that $\sim=\sim_Q$.
\end{thm}

The next theorem complements Theorem \ref{t:kiwi1}.

\begin{thm}\cite[Theorem 2 and Lemma 37]{bco13}
\label{t:bco1}
Suppose that $Q$ is a polynomial of degree $d$ such that $J_Q$ is connected.
There exists a monotone map $p:J_Q\to \uc/\sim_Q$ that semiconjugates
$Q|_{J_Q}$ with the induced map $f_{\sim_Q}:\uc/\sim_Q\to \uc/\sim_Q$.
If $Q$ has a parabolic or attracting periodic Fatou domain, then its boundary is not one $\sim_Q$-class.
If $\mathbf{h}$ is a finite periodic $\sim_Q$-class,
then the impressions of all angles from $\mathbf{h}$ coincide and equal a periodic repelling or parabolic point of $Q$.
\end{thm}

Theorem \ref{t:bco1} is weaker than Theorem \ref{t:kiwi1} as it does not claim the finiteness of $\sim_Q$-classes.
Still, it can be helpful as it applies to all polynomials with connected Julia sets. By Theorem \ref{t:bco1},
each $\sim_P$-class $\mathbf{h}$ corresponds to the (connected and closed)
union of impressions of rays whose arguments are elements of $\mathbf{h}$.
We will call this union the \emph{impression} of $\mathbf{h}$.

Now we can move on to describing our approach.
A point $x$ is \emph{(pre)\-re\-pel\-ling} if it eventually maps to a \emph{repelling} periodic point.
An unordered pair of rational angles $\{\al,\be\}\subset\Q/\Z$ is \emph{(pre)repelling} if the
external rays with arguments $\al$ and $\be$ land at the same (pre)repelling point.
Let $\rep_P$ be the set of all (pre)repelling pairs of angles. Observe that this
set (and related to it concepts defined later) can be considered
for a polynomial $P$ of any degree if its Julia set is connected.

\begin{dfn}\label{d:replam}
Let $\bw_P$ be the equivalence relation on $\rb/\Z$ given by $\al\bw_P \be$ if
$\{\al,\be\}\in\rep_P$ or $\al=\be$.
Let $\lam^{rep}_P$ be the set of all edges of the convex hulls in $\cdisk$ of all $\bw_P$-classes
\emph{and the limits of these edges}.
By Theorem \ref{t:laclo}, the set $\lam^{rep}_P$ is a lamination (cf. \cite{bmov13}).
\end{dfn}

%The equivalence relation
By definition $\bw_P$ deals with \emph{repelling} cycles but \emph{not} with parabolic cycles.
Yet, %However
$\lam^{rep}_P$ may have periodic gap-leaves
associated with parabolic points of $P$ because limits of edges of $\bw_P$-classes
are also leaves of $\lam^{rep}_P$.

The lamination $\lam^{rep}_P$ is associated with an equivalence relation $\sim_{\lam^{rep}_P}$ on $\uc$
so that all gap-leaves of $\lam^{rep}_P$ are convex hulls of $\sim_{\lam^{rep}_P}$-classes.
Since $\sim_{\lam^{rep}_P}$ is the closure of $\bw_P$, in what follows we simply denote it by $\bw_P$.
Clearly, if $\al \bw_P \be$ then $\al \sim_P \be$. Therefore $\sim_P$ tunes $\bw_P$ (in other words,
$\sim_P$ may add more connections among arguments of external rays compared to $\bw_P$).
Thus, if a polynomial $g$ is such that
$\sim_g=\bw_P$, then $P$ tunes $g$; therefore, if $g$ belongs to a T-class then so does $P$.
We use this observation all the time without additional
explanations because it allows us to replace, in a lot of arguments, $P$ and $\sim_P$ by $g$ such that $\sim_g=\bw_P$.

\begin{dfn}
\label{d:poly-fiber}
For $P\in \Cc_3$, the \emph{fiber of critical portraits $\F_P$ generated by $P$} is defined as follows.

\begin{enumerate}

\item If $\lam^{rep}_P$ is non-central, then, by Theorem \ref{t:no-share-1}, it has a unique non-central minimal lamination
denoted by $\lam^{min}_P$; denote by $\F_P$ the fiber $\crp(\lam^{min}_P)$ of critical portraits compatible with $\lam^{min}_P$.

\item If $\lam^{rep}_P$ is central (e.g., if $\lam^{rep}_P$ is trivial), we let $\F_P=\F_0$ be the central fiber of critical portraits
(e.g., $\F_0$ serves all polynomials $P$ with empty $\lam^{rep}_P$).

\end{enumerate}

\end{dfn}

By Theorem \ref{t:alli}, the sets $\F_P$ and $\F_0$ are closed.

\begin{lem}\label{l:reg-all}
For $P\in\Cc_3$ with non-central fiber $\F_P$ of critical portraits, $\F_P=\crp(\lam^{rep}_Q)$ for a
polynomial $Q$ (possibly $Q\ne P$); here $\lam^{rep}_Q=\lam^{min}_P$.
\end{lem}

\begin{proof}
By Lemma \ref{l:periodense}, the lamination $\lam^{min}_P$ is perfect.
Then $\lam^{min}_P$ has no infinite periodic gaps of degree 1 by Lemma \ref{l:crit-must}.
By Theorem \ref{t:kiwi1}, there exists a polynomial $Q$ such that $\sim_Q=\sim_{\lam^{min}_P}$
and $\lam_{\sim_Q}=\lam^{min}_P$.
By Lemma \ref{l:periodense}, the lamination $\lam^{min}_P$ has infinitely many periodic gap-leaves,
 and, for any such gap-leaf, the iterated pullbacks of its leaves are dense in $\lam^{min}_P$.
We can choose a periodic gap-leaf like that so that, in terms of $Q$, it is associated with
a repelling periodic point. It follows that $\lam^{rep}_Q=\lam_{\sim_Q}=\lam^{min}_P$.
Then, by definition, $\F_P=\crp(\lam^{min}_P)=\crp(\lam^{rep}_Q)$, as desired.
\end{proof}

A point $x$ is \emph{($P$-)stable}
if its forward orbit is finite and contains no critical points and no
non-repelling periodic points. Lemma \ref{l:rep} follows from \cite{hubbdoua85}
(cf. \cite[Lemma B.1]{gm93}).

\begin{lem}\cite{hubbdoua85, gm93}
 \label{l:rep}
Let $g\in \Cc_3$ be a polynomial, and $z$ be a $g$-stable point. If an
external ray $R_g(\theta)$ with rational argument $\theta$ lands at
$z$, then, for every polynomial $\tilde g$ sufficiently close to $g$,
the external ray $R_{\tilde g}(\theta)$ lands at a $\tilde g$-stable point $\tilde z$ close
to $z$. Moreover, $\tilde z$ depends holomorphically on $\tilde g$.
\end{lem}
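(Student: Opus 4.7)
The plan is to reduce the statement to the already-known stability of external rays landing at repelling periodic points, and then propagate the conclusion back to the preperiodic $z$ by iterating inverse branches. Since $z$ is stable, the forward $g$-orbit of $z$ is finite and contains neither critical points nor non-repelling periodic points, so there is a minimal $N\ge 0$ with $w=g^N(z)$ repelling periodic of some period $p$, and none of $z, g(z), \dots, g^{N-1}(z)$ is critical.

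First I would handle the periodic case $N=0$, i.e.\ $z=w$. Because $(g^p)'(w)\neq 1$, the implicit function theorem yields a holomorphic map $\tilde g\mapsto \tilde w(\tilde g)$ on a neighborhood of $g$ in $\poly_3$ with $\tilde g^p(\tilde w)=\tilde w$, $\tilde w(g)=w$, and $\tilde w$ still repelling. The classical Douady--Hubbard linearization argument then shows that $R_{\tilde g}(3^N\theta)$ persistently lands at $\tilde w$: one picks a local linearizing coordinate for $g^p$ at $w$ on a topological disk $U$, observes that $g^{pn}(R_g(3^N\theta))\cap U$ for large $n$ is an arc whose combinatorial position relative to the linearizing dynamics is open in $\tilde g$, and pulls this back to conclude that $R_{\tilde g}(3^N\theta)$ still enters the linearization neighborhood and accumulates only on $\tilde w$. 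This gives the lemma in the case $z=w$, including holomorphic dependence.

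For the general case $N\ge 1$, I would invert $g^N$ along the orbit of $z$. Since no point of $z, g(z), \dots, g^{N-1}(z)$ is critical, there is a single-valued holomorphic inverse branch $h$ of $g^N$ on a neighborhood $V$ of $w$ with $h(w)=z$. For $\tilde g$ close to $g$, the obvious holomorphic continuation provides an inverse branch $\tilde h$ on a slightly smaller neighborhood of $\tilde w$, and we set $\tilde z=\tilde h(\tilde w)$; this is holomorphic in $\tilde g$ since both $\tilde w$ and the inverse branch are. The ray $R_{\tilde g}(\theta)$ is sent by $\tilde g^N$ onto $R_{\tilde g}(3^N\theta)$, which lands at $\tilde w$ by the previous step. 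A tail of $R_{\tilde g}(\theta)$ therefore lies in $\tilde h(\tilde V)$; since it accumulates on a connected subset of $(\tilde g^N)^{-1}(\tilde w)\cap \tilde h(\tilde V)=\{\tilde z\}$, we conclude that $R_{\tilde g}(\theta)$ lands at $\tilde z$. Continuity of the landing point in $\tilde g$ is then automatic from continuity of $\tilde w$ and $\tilde h$.

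The main obstacle, and the one genuinely analytic step, is the periodic case: showing that under a perturbation of the polynomial the ray does not develop new accumulation away from $\tilde w$ before it reaches the linearization domain. The standard device is to fix a large iterate so that a definite tail of $R_g(3^N\theta)$ sits in a compact piece of the linearization disk, use upper semicontinuity of external rays on compact sets in the basin of infinity, and then observe that the only thing that can happen under perturbation is that this compact tail stays in the (perturbed) linearization disk, which forces landing at $\tilde w$. The preperiodic extension and the holomorphic dependence of $\tilde z$ are then formal consequences.
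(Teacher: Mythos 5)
The paper offers no proof of this lemma: it is stated as a known result quoted from Douady--Hubbard \cite{hubbdoua85} (cf.\ \cite[Lemma B.1]{gm93}). Your sketch reproduces the standard argument from those sources --- holomorphic continuation of the repelling cycle via the implicit function theorem, persistence of the landing through the linearizing coordinate together with continuity of rays on compact subsets of the basin of infinity, and pullback along non-critical inverse branches in the preperiodic case --- and is correct in outline.
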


Since $\crp$ is a compact metrizable space, we can talk about
convergence of compact subsets of $\crp$ into each other.
If $\{\A_i\}$, $i=1$, $2$, $\dots$ is a sequence of compact subsets of $\crp$ that converge into
a compact subset $Y$ of $\crp$, we write $\A_i\hto Y$.

%a in the plane,
%and that %$\A$
%$\{\B_\al\}$ is a family of compacta.
%We write $\A_i \hto \A$ if the limit of every convergent sequence $C_i\in\A_i$ in the Hausdorff metric is a compactum from $\{\B_\al\}$.
%Note that $\A_i\hto \A$ implies $\A_i\hto \A'$ for every $\A'\supset\A$.

\begin{lem}\label{l:regulim}
Consider a sequence of polynomials $P_i\in\Cc_3$ converging to a polynomial $P\in\Cc_3$.
If $\lam^{rep}_P$ is non-central and $\lam^{rep}_{P_i}$ converge to some lamination $\lam'$,
then $\lam^{rep}_P$ and $\lam'$ belong to the same non-central
alliance of laminations, $\lam^{rep}_{P_i}$ are non-central
for all sufficiently large $i$, and $\F_{P_i}\hto \F_P$.
\end{lem}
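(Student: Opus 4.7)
The plan is to embed the unique regular chief $\lam^c_P$ of $\lam^r_P$ into the limit $\lam'$, and then to derive all three conclusions by routing the argument through $\lam'$ and invoking the disjointness of alliances from Theorem~\ref{t:no-share-1}. The key ingredients are Lemma~\ref{l:rep} (stability of rays landing at stable (pre)periodic points), Lemmas~\ref{l:dense} and~\ref{l:periodense} (density of iterated pullbacks of a periodic leaf in a regular chief), Lemma~\ref{l:flowerc} (limits of flower-like sets), Lemma~\ref{l:simple} (nonempty invariant laminations contain chiefs), and the compactness of $\ssl$ from Theorem~\ref{t:laclo}.

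First I would show $\lam^c_P \subset \lam'$. By Lemma~\ref{l:periodense}, the regular chief $\lam^c_P$ has infinitely many periodic laps, and the iterated pullbacks of any single periodic leaf are dense in $\lam^c_P$. Fix a periodic lap $G_0$ of $\lam^c_P$: by Lemma~\ref{l:3.9} its rational periodic vertices correspond to $P$-rays landing at a common repelling periodic point, which is stable. Lemma~\ref{l:rep} (together with the identity principle applied to the holomorphic motions of the landing points, which coincide at $P$) shows that the corresponding rays of $P_i$ all land at a common nearby repelling periodic point for $i$ large; hence each edge of $G_0$ is a leaf of $\lam^r_{P_i}$, so $G_0$ is a lap of $\lam'$. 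The same reasoning applies to each iterated pullback of $G_0$ whose (pre)periodic landing point is stable, i.e., whose forward orbit misses the finitely many critical points of $P$. Only a sparse subset of pullbacks at each depth fails this condition, so stable pullbacks remain dense in $\lam^c_P$, and closedness of $\lam'$ yields $\lam^c_P \subset \lam'$. Thus $\lam^c_P$ is a regular chief of $\lam'$, placing $\lam'$ in the regular alliance of $\lam^r_P$.

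Next I would prove that $\lam^r_{P_i}$ is regular for all large $i$. Suppose not; after passing to a subsequence, each $\lam^r_{P_i}$ has a central chief $\lam^c_i$ compatible with a flower-like set $F_i$. By Theorem~\ref{t:laclo} and Lemma~\ref{l:flowerc}, after a further subsequence $\lam^c_i \to \lam^c_\infty \subset \lam'$ and $F_i \to F$, where $F$ contains a flower-like set $F'$ compatible with $\lam^c_\infty$. By Lemma~\ref{l:simple}, $\lam^c_\infty$ contains a chief $\lam''$, which is then also a chief of $\lam'$. Both $\lam^c_P$ and $\lam''$ are chiefs of $\lam'$, so by Theorem~\ref{t:no-share-1} either $\lam''=\lam^c_P$ or both are central; regularity of $\lam^c_P$ leaves only the first option, whence $\lam^c_P=\lam''\subset\lam^c_\infty$ is compatible with $F'$, contradicting regularity of $\lam^c_P$. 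Thus $\lam^r_{P_i}$ is regular for all large $i$ with regular chief $\lam^c_i$ and $\A_{P_i}=\crp(\lam^c_i)$.

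Finally, for $\A_{P_i}\hto \A_P$, take any convergent sequence $\K_i\in\A_{P_i}$ with $\K_i\to \K$. Each $\K_i$ is compatible with $\lam^c_i$; passing to a subsequence with $\lam^c_i\to \lam^c_\infty\subset\lam'$, the limit $\K$ is compatible with $\lam^c_\infty$. The chief-identification step gives $\lam^c_P\subset \lam^c_\infty$, so $\K$ is compatible with $\lam^c_P$, and $\K\in\crp(\lam^c_P)=\A_P$. The main obstacle is that Lemma~\ref{l:rep} is pointwise: the threshold on $i$ depends on the particular leaf being transferred, so one cannot obtain $\lam^c_P\subset\lam^r_{P_i}$ directly, only $\lam^c_P\subset \lam'$. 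The entire argument must therefore pass through $\lam'$ and exploit the uniqueness of regular chiefs within any lamination (via Theorem~\ref{t:no-share-1}) to transport the regular structure back to the $\lam^r_{P_i}$; the subtlety that preperiodic pullbacks meeting the critical orbit are not stable is handled by their sparsity within the dense family of pullbacks of $G_0$.
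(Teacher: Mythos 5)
Your proof is correct and follows essentially the same route as the paper: the core step in both is to transfer a periodic lap of the regular chief $\lam^c_P$ and its iterated pullbacks to the $\lam^r_{P_i}$ via Lemma \ref{l:rep}, obtaining $\lam^c_P\subset\lam'$, after which the remaining claims follow from the alliance-partition machinery (the paper simply cites Theorem \ref{t:alli}, whose proof contains exactly the flower-like-set and critical-portrait limit arguments you spell out). The only place the paper is tidier is that it chooses the repelling periodic point $x$ so that it is not an eventual image of a critical point, which makes \emph{every} iterated preimage of $x$ automatically stable and cleanly replaces your somewhat under-justified claim that the stable pullbacks by themselves remain dense in $\lam^c_P$.
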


\begin{proof}
By Lemma \ref{l:periodense}, the lamination $\lam^{min}_P$ is perfect, has infinitely many periodic
gap-leaves, and all iterated pullbacks of any periodic leaf of $\lam^{min}_P$ are
dense in $\lam^{min}_P$. Choose a repelling $P$-periodic point $x$ that is not
an eventual image of a critical point of $P$. Take any iterated $P$-preimage $y$ of
$x$ and write $A_y(P)$ for the set of arguments of all $P$-external rays landing at $y$.
By Lemma \ref{l:rep}, there exists $N_y$ such that for every $i>N_y$,
there is a point $y_i$ with $A_{y_i}(P_i)=A_y(P)$.
It follows that $\lam^{min}_P\subset \lam'$. This implies the first
claim of the lemma which, by Theorem \ref{t:alli}, implies the other claims.
\end{proof}

Recall: $X_3$ is the partition of $\crp$ formed by the fibers of critical portraits.

\begin{dfn}\label{d:pi}
Define the map $\eta_3:\Cc_3\to X_3$ by the formula $\eta_3(P)=\F_P$.
\end{dfn}

We may talk of central or non-central polynomials, as well of central or non-central fibers of polynomials,
 as is specified in the following definition.

\begin{dfn}\label{d:pifibers}
A polynomial $P$ is a \emph{non-central polynomial} if $\lam^{rep}_P$ is non-central. A polynomial $P$ is \emph{central}
if $\lam^{rep}_P$ is central. A minimal non-central lamination $\lam^{min}$ defines the corresponding
\emph{non-central fiber of polynomials} which consists of polynomials $f$ such that $\lam^{rep}_f\supset \lam^{min}$.
The \emph{central fiber of polynomials} is the family of polynomials $g$ such that $\lam^{rep}_g$ is trivial or contains a minimal lamination
$\lam^{min}_g$ compatible with a flower-like set.
\end{dfn}

\begin{thm}
\label{t:main}
The map $\eta_3$ is continuous.
\end{thm}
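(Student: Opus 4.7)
The plan is to verify sequential continuity of $\pi$. Given a sequence $P_i\to P$ in $\Cc_3$, I need to show $\A_{P_i}\to\A_P$ in $\crp/\{\A_P\}$. Since the alliance partition of the compact space $\crp$ is USC by Theorem \ref{t:alli}, the quotient is compact Hausdorff; in such a quotient, $\A_{P_i}\to\A_P$ is equivalent to the following statement: for every convergent sequence $\K_i\in\A_{P_i}$ with $\K_i\to\K$ in $\crp$, one has $\K\in\A_P$. This is the statement I aim to prove, splitting according to the type of $\lam^r_P$.

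If $\lam^r_P$ is regular, then $\A_P=\crp(\lam^c_P)$, and Lemma \ref{l:regulim} directly yields $\A_{P_i}\hto\A_P$, hence $\K\in\A_P$. So I may assume $\lam^r_P$ is central and $\A_P=\A_0$. For each $i$ pick a chief $\lam^\natural_i$ witnessing $\K_i\in\A_{P_i}$: either the unique regular chief $\lam^c_{P_i}$ (if $\A_{P_i}$ is regular) or some central chief of $\lam^r_{P_i}$ compatible with $\K_i$ (if $\A_{P_i}=\A_0$, which provides such a chief by definition). By compactness of $\ssl$ (Theorem \ref{t:laclo}), after passing to a subsequence, $\lam^\natural_i\to\lam^\natural\in\ssl$. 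Since compatibility of chords is a closed condition, $\lam^\natural$ is compatible with $\K$, and Lemma \ref{l:simple} supplies a chief $\lam^*\subset\lam^\natural$, which is also compatible with $\K$. It remains to show $\lam^*$ is central.

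To accomplish this, fix a flower-like set $F$ compatible with the central chief $\lam^c_P$. The plan is to produce a flower-like set compatible with $\lam^*$ by leveraging two ingredients: Lemma \ref{l:rep} on the stability of landing rays at legal (eventually repelling periodic) points, which forces the legal backbone of $F$ to persist into $\lam^r_{P_i}$ for all large $i$; and Lemma \ref{l:flowerc}, that a limit of flower-like sets contains a flower-like set. Using the former, one extracts flower-like sets $F_i$ compatible with $\lam^\natural_i$ that converge to a flower-like set $F^*$ compatible with $\lam^\natural$, and in particular with $\lam^*$. The hard part is precisely this extraction in the mixed scenario where $\A_{P_i}$ is regular while $\A_P=\A_0$ (e.g., a parabolic fixed point of $P$ perturbing to a repelling one): the regular chief $\lam^c_{P_i}$ has no flower-like set a priori, so one must use the persistence of the repelling skeleton of $F$ together with the minimality built into the definition of a chief to guarantee that $\lam^\natural_i$ cannot cross the edges of an approximating flower-like $F_i$. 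Granting this, $\lam^*$ is central, hence $\K\in\A_0=\A_P$, and continuity of $\pi$ is established.
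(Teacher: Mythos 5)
Your reduction to showing $\limsup \A_{P_i}\subset\A_P$, your treatment of the regular case via Lemma \ref{l:regulim}, and your handling of the all-central case (limits of central chiefs via Theorem \ref{t:laclo} and Lemma \ref{l:flowerc}) all match the paper. The gap is exactly where you place it: the mixed case with $\A_{P_i}$ regular and $\A_P=\A_0$. You propose to transport a flower-like set $F$ compatible with a central chief of $\lam^r_P$ forward to flower-like sets $F_i$ compatible with the chiefs $\lam^{\natural}_i=\lam^c_{P_i}$, and you explicitly write ``Granting this''; but this is the entire content of the hard case, and the proposed mechanism cannot work as stated. By Definition \ref{d:central} a regular chief is compatible with \emph{no} flower-like set, so the sets $F_i$ you want to extract do not exist. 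More fundamentally, flower-like sets encode non-repelling fixed-point behavior, while Lemma \ref{l:rep} only gives stability of rays landing at \emph{repelling} (stable) points; when a parabolic point of $P$ perturbs to a repelling one for $P_i$, nothing forces the combinatorics of $\lam^r_{P_i}$ to remain close to $F$, so the ``persistence of the repelling skeleton of $F$'' you invoke has no repelling skeleton to act on.

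The paper resolves this case by arguing in the opposite direction and by contradiction: assuming the limit alliance $\A'$ is regular with regular chief $\lam'$, it uses Lemma \ref{l:periodense} to find infinitely many periodic laps of $\lam'$, chooses one lap $G$ whose vertices avoid the finitely many angles associated with parabolic points of $P$, shows via Lemmas \ref{l:limleaf} and \ref{l:gland} that $G$ is a lap of $\lam^r_{P_i}$ associated with a repelling periodic point, transfers $G$ and all its iterated pullbacks into $\lam^r_P$ using the stability Lemma \ref{l:rep}, and concludes from the density of these pullbacks that $\lam'\subset\lam^r_P$ --- contradicting $\A_P=\A_0$ by Theorem \ref{t:no-share-1}. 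This pull-back of the repelling periodic skeleton of the putative regular limit chief into $\lam^r_P$ is the step you must supply in place of the forward propagation of $F$; as written, your argument does not establish the theorem.
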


\begin{proof}
Consider a sequence $P_i\to P$ of polynomials, and set $\F_i=\F_{P_i}$.
If $\F_i\not\hto \F_P$, then by Theorem \ref{t:alli} we may assume that
$\F_i\hto \F'\ne \F_P$ where (1) the limit $\F'$ is a fiber of critical portraits, and (2)
all fibers of critical portraits $\F_i$ are non-central, or $\F_i=\F_0$ for every $i$.
By Lemma \ref{l:regulim}, this situation is impossible if $\F_P$ is non-central, or
if $\F_i=\F_P=\F_0$. It remains to assume that all fibers of critical portraits
$\F_i$ are non-central, $\F'$ is non-central, $\F_P=\F_0$ is central, and bring this to a contradiction.
Let $\lam'$ be the non-central minimal lamination such that $\F'=\crp(\lam')$.

If $\lam^{rep}_{P_i}\to \lam$ for a non-central lamination $\lam$,
then $\lam'\subset \lam$ by Theorem \ref{t:alli}. By Lemma \ref{l:periodense}, there are \emph{infinitely} many periodic
gap-leaves of $\lam'$; since $\lam'\subset \lam$, it follows from Lemma \ref{l:limleaf} that any such
gap-leaf is a gap-leaf of $\lam$. Let $B_P$ be the set of vertices of
gap-leaves of $\lam^{rep}_P$ associated with parabolic points. Choose a periodic
gap-leaf $G$ of $\lam'$ so that no vertex of $G$ belongs to $B_P$ (this is always possible as
$B_P$ is \emph{finite}). By Lemma \ref{l:limleaf}, the set $G$ is a gap-leaf of $\lam_i$ for sufficiently large $i$.
By Theorem \ref{t:kiwi1} and by our assumptions, $G$ is associated with a repelling periodic point, say, $y_i$ of $P_i$.

Evidently, $G$ is a gap-leaf of $\lam^{rep}_P$, too. Indeed, $P$-external rays corresponding to the vertices of $G$
land at repelling points by the choice of $G$. By the above, $P_i$-external rays with the same arguments all land at $y_i$.
If the $P$-external rays mentioned above do not land at the same point, then
by continuity (Lemma \ref{l:rep}) neither do the corresponding $P_i$-external rays, a contradiction. So, $G$ is a gap-leaf
of $\lam^{rep}_P$, too. Let $y$ be the repelling periodic point of $P$  associated with $G$.
By the choice of $G$, no critical point of $P$ ever maps to $y$, hence, by Lemma \ref{l:rep},
all pullbacks of $G$ in $\lam^{rep}_P$ eventually become
gap-leaves of $\lam_{i}$, and, therefore, of $\lam'$.
By the properties of non-central minimal laminations listed in Lemma \ref{l:periodense}, it follows that
$\lam'\subset \lam^{rep}_P$. This contradicts the assumption that $\F_P=\F_0$ and completes the proof.
\end{proof}

\section{Connectedness of the fibers of polynomials}\label{s:conn-fib}

Let $P\in \Cc_3$. Recall \cite{kiw01} that $\lambda(P)$ is an
equivalence relation on $\Q/\Z$ such that $(\al, \be)\in \la(P)$
if and only if the external rays $R_P(\alpha)$, $R_P(\beta)$ land at the same point.
Let $\ol{\la(P)}$ be the closure of the equivalence relation $\la(P)$.

%two arguments $\alpha$,
%$\beta\in\Q/\Z$ are equivalent if and only if the external rays
%$R_P(\alpha)$, $R_P(\beta)$ land at the same point.
%Write $\ol{\la(P)}$ for the closure of the equivalence relation $\la(P)$.

\begin{lem}\cite[Lemma 3.9]{kiw01}
\label{l:3.9}
For a polynomial $P$, the relation $\la(P)$ is closed as a subset of $\Qb\times \Qb$.
If $(x, y)\in \ol{\la(P)}$ is (pre)periodic and $x\ne y$
then $(x, y)\in \la(P)$ and the external rays $R_P(x)$ and $R_P(y)$ form a cut.
\end{lem}

For $f_0\in\Cc_d$, the \emph{combinatorial renormalization domain}
$\Cc(f_0)$ is defined \cite{IK12} as $\{f\in\Cc_d\mid \lambda(f)\supset
\lambda(f_0)\}$ (i.e., if two angles $\al$ and $\be$ are $\la(f_0)$-equivalent, then they are $\la(f)$-equivalent).

\begin{lem}\label{l:lamvsCc}
Assume that $P$, $P_0\in\Cc_3$, every nondegenerate leaf of $\lam^{rep}_{P_0}$ is in a
gap-leaf of $\lam^{rep}_P$, and $P_0$ has no neutral cycles.
Then $P\in\Cc(P_0)$.
\end{lem}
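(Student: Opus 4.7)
The plan is to verify $\la(P)\supset \la(P_0)$ by a two-level chaining argument, one level living in $\lam^r_{P_0}$ and one in $\lam^r_P$. Fix rational angles $\al\ne\be$ with $\{\al,\be\}\in\la(P_0)$ and aim to show $\{\al,\be\}\in\la(P)$. Since $P_0$ has no neutral cycles, both $R_{P_0}(\al)$ and $R_{P_0}(\be)$ land at a common repelling (pre)periodic, hence legal, point. By Lemma \ref{l:gland}, $\al$ and $\be$ are then vertices of a common (pre)periodic lap $G$ of $\lam^r_{P_0}$. Since a lap is by definition either a finite gap or a nondegenerate leaf, I can join $\al$ to $\be$ by a finite chain of edges $e_1,\dots,e_k$ of $G$, writing $e_j=\ol{v_{j-1}v_j}$ with $v_0=\al$, $v_k=\be$, and each $v_j$ a rational angle.

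Next, each $e_j$ is a nondegenerate leaf of $\lam^r_{P_0}$, so by hypothesis $e_j$ is contained in some lap $H_j$ of $\lam^r_P$; in particular $v_{j-1}$ and $v_j$ are both vertices of $H_j$. Because $v_{j-1}$ is rational, its $\si_3$-orbit is (pre)periodic; combined with the fact that $H_j$ is a lap (hence a finite gap or a lone leaf), this forces the $\si_3$-orbit of $H_j$ itself to be (pre)periodic, so every edge of $H_j$ is a (pre)periodic leaf of $\lam^r_P$.

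Inside each $H_j$ I would then traverse a chain of edges from $v_{j-1}$ to $v_j$. By Lemma \ref{l:3.9} applied to each (pre)periodic edge of $H_j$, its two endpoints lie in $\la(P)$, so transitivity of $\la(P)$ gives $\{v_{j-1},v_j\}\in\la(P)$. Chaining these equivalences across $j=1,\dots,k$ and using transitivity one more time yields $\{\al,\be\}\in\la(P)$, which is the defining condition for $P\in\Cc(P_0)$.

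The main obstacle I anticipate is guaranteeing that every lap encountered is finite and (pre)periodic so that Lemma \ref{l:3.9} applies at each step. For the initial lap $G\subset\lam^r_{P_0}$ this is exactly where the assumption that $P_0$ has no neutral cycles is used, via Lemma \ref{l:gland}; for the intermediate laps $H_j\subset\lam^r_P$, finiteness is baked into the definition of a lap (infinite gaps are explicitly excluded), while (pre)periodicity is forced by the presence of a rational vertex. A subsidiary point needing care is the distinction between a leaf being \emph{contained in} a lap $H_j$ (what the hypothesis provides) versus merely crossing it; containment is what guarantees that the endpoints of $e_j$ are vertices of a common $\sim_P$-class and underlies the entire argument.
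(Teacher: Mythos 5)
Your proposal is correct and follows essentially the same route as the paper's proof: the paper argues that $\la(P_0)=\sim_{P_0}$ on rational angles (using the absence of neutral cycles), that the hypothesis forces $\al\sim_{P_0}\be\Rightarrow\al\sim_P\be$, and then invokes Lemma \ref{l:3.9} to return from $\sim_P$ to $\la(P)$. Your chain-of-edges argument through the laps $G$ and $H_j$ is just an explicit unpacking of the implication $\al\sim_{P_0}\be\Rightarrow\al\sim_P\be$ and of the application of Lemma \ref{l:3.9} plus transitivity, which the paper states more tersely.
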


Note that Lemma \ref{l:lamvsCc} is applicable in the case $\lam^{rep}_P\supset\lam^{rep}_{P_0}$.

\begin{proof}
Since $P_0$ has no neutral cycles, then $\la(P_0)=\sim_{P_0}$ on rational angles. If now $\al$ is $\la(P_0)$-equivalent
to $\be$, then $\al\sim_{P_0} \be$. By the assumptions,
if $\al \sim_{P_0} \be$ then $\al\sim_P \be$; by Lemma \ref{l:3.9}, the angle $\al$ is $\la(P)$-equivalent
to $\be$. Thus, $\la(P_0)\subset \la(P)$ as desired.
\end{proof}

Theorem \ref{t:SW} easily follows from much stronger results of Shen--Wang \cite{shen2020primitive},
Wang \cite{wang2021primitive}, and Kozlovski--van Strien \cite{KozlovskivanStrien2009}.

\begin{thm}\label{t:SW}
Suppose that $\lam^{rep}_P$ is perfect for some $P\in\Cc_3$. If there exists no
infinite sequence $P_n\in\Cc_3$ such that
$\Cc(P_1)\supsetneq\Cc(P_2)\supsetneq\dots$ and $\bigcap_n \Cc(P_n)\supset \Cc(P)$
then the set $\Cc(P)$ is connected.
In particular, $\Cc(P)$ is connected provided that $\lam^{rep}_P$ is a perfect minimal lamination.
\end{thm}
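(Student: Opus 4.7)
The plan is to reduce the theorem to the classification of combinatorially non-renormalizable renormalization domains provided by Shen--Wang~\cite{shen2020primitive}, Wang~\cite{wang2021primitive}, and Kozlovsky--van Strien~\cite{KozlovskivanStrien2009}. First, the chain condition in the hypothesis ensures, via a Zorn-type argument applied to the poset $\{\Cc(Q) : \Cc(Q) \supset \Cc(P),\ Q\in\Cc_3\}$, the existence of a polynomial $P_0$ with $\Cc(P_0) \supset \Cc(P)$ such that $P_0$ is combinatorially non-renormalizable in the sense of \cite{IK12}. The perfectness of $\lam^r_P$ transfers to $\lam^r_{P_0}$, and, by Lemma~\ref{l:crit-must}, rules out Siegel and caterpillar gaps in $\lam^r_{P_0}$; this places $P_0$ in the scope of the cited classification.

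More precisely, $P_0$ is either (i) primitive hyperbolic of type A, B, C, or D, in which case \cite{shen2020primitive} yields a continuous bijection from $\Cc(P_0)$ to a connected model space (one of $\Mc$, $\Mc\times\Mc$, $\mathcal{MK}$, or the connectedness locus of bi-quadratic polynomials); or (ii) primitive non-hyperbolic with a cycle of attracting basins, in which case \cite{wang2021primitive} yields $\Cc(P_0)\cong\Mc$; or (iii) without any attracting cycles, in which case \cite{KozlovskivanStrien2009} yields that $\Cc(P_0)$ is a singleton. In all three cases $\Cc(P_0)$ is connected, and the renormalization map identifies $\Cc(P)\subset\Cc(P_0)$ with a combinatorial renormalization domain of the simpler model polynomial. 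Applying the same argument inductively to the model (the tower being finite by hypothesis) then yields that $\Cc(P)$ itself is connected.

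The \emph{main obstacle} lies in the reduction: ensuring that $P_0$ indeed falls into one of the three cases above, i.e., that the perfectness assumption rules out the satellite case (which remains open in general). For the \emph{in particular} statement, the chief property of $\lam^r_P$ makes this reduction trivial: a chief has no proper nonempty invariant sublamination, so no $\Cc(Q)\supsetneq\Cc(P)$ with $\lam^r_Q$ nonempty exists. Hence $P$ is itself combinatorially non-renormalizable, and the classification applies directly to $\Cc(P)$, yielding its connectedness.
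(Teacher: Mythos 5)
Your overall strategy --- climb to a combinatorially non-renormalizable $P_0$ at the top of the renormalization tower, apply the Shen--Wang / Wang / Kozlovski--van Strien classification there, and then descend through the model spaces by induction --- is genuinely different from the paper's proof, and as written it has gaps that the paper's more direct route avoids. The paper does not climb the tower at all: it uses Lemma \ref{l:gland} to replace $P$ by a polynomial with the same (perfect) real lamination and no neutral cycles, checks via Lemma \ref{l:lamvsCc} that this does not change $\Cc(P)$, and then applies the three cited results \emph{directly to $P$}, splitting into the cases ``all cycles repelling'', ``hyperbolic'', and ``one critical point attracted, the other in $J_P$''. The chain hypothesis is used only to make $P$ finitely renormalizable so that the cited results (or their straightforward extensions, as noted for \cite{wang2021primitive}) apply, and perfectness of $\lam^r_P$ both enables the reduction via Lemma \ref{l:gland} and excludes touching periodic Fatou gaps (a shared edge of two gaps would be an isolated leaf), so the polynomial to which the classification is applied is automatically primitive.

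The concrete problems with your version are these. First, the inductive descent through the models does not close up: the model spaces are $\Mc$, $\Mc\times\Mc$, $\mathcal{MK}$, the bi-quadratic locus, or $\Cc_3$, so ``applying the same argument to the model'' would require the analogue of the theorem in each of these parameter spaces, which is not what is being proved; worse, the straightening map is only a bijection and is known to be discontinuous (\cite{Ino09}), so connectedness of the image of $\Cc(P)$ in the model does not transfer back to $\Cc(P)$. Second, the assertion that ``the perfectness of $\lam^r_P$ transfers to $\lam^r_{P_0}$'' is unjustified: $\lam^r_{P_0}$ is a proper sublamination of $\lam^r_P$, and a sublamination of a perfect lamination need not be perfect; since you need perfectness at the top level precisely to exclude the satellite case --- which you yourself flag as the ``main obstacle'' without resolving it --- this is a real hole. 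Third, your Zorn-type argument uses the hypothesis in the wrong direction: the stated condition forbids infinite \emph{decreasing} chains of domains containing $\Cc(P)$, which does not by itself produce a maximal element of the poset $\{\Cc(Q)\supset\Cc(P)\}$. All three issues disappear in the paper's argument because the classification is applied to $P$ itself rather than to the top of a tower.
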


\begin{proof}
By Theorem \ref{t:kiwi1}, there is $P_0\in\Cc_3$ without neutral cycles such that $\lam^{rep}_P=\lam^{rep}_{P_0}$.
By Lemma \ref{l:lamvsCc}, $\Cc(P)=\Cc(P_0)$.
So, we may assume that $P$ has no neutral cycles.
If all periodic points of $P$ are repelling, then $\Cc(P)$ is a singleton, by \cite[Theorems 1.1 and 1.2]{KozlovskivanStrien2009}
and the assumptions. Assume that $P$ has (super)attracting domains.
If $P$ is hyperbolic, the result follows from the main result of Shen and Wang \cite{shen2020primitive}.
Finally, if one critical point $c_1$ of $P$ lies in a (super)attracting periodic basin,
the other critical point $c_2$ is in the Julia set, and $P$ has the properties from
the statement of the theorem, then $\Cc(P)$ is also connected by \cite[Theorem A]{wang2021primitive}.
Note: only the nonrenormalizable case is considered in \cite{wang2021primitive} but the
 extension to the finitely renormalizable case is straightforward; the connectedness of
 $\Cc(P)$ follows from the bijectivity of the straightening map in the same way as in \cite{shen2020primitive}.
\end{proof}

Let $\F$ be a non-central fiber of critical portraits, and
consider the corresponding non-central fiber $\eta_3^{-1}(\F)$ of polynomials. Lemma \ref{l:regfibren}
completes the proof of the fact that non-central fibers of polynomials are connected. To avoid cumbersome
notation, set $\lam_{\ol{\la(P)}}=\lam^{rat}_P$.

\begin{lem}
  \label{l:regfibren}
If $\F$ is a non-central fiber of critical portraits, then $\eta_3^{-1}(\F)=\Cc(P)$
for some $P\in \Cc_3$ without neutral cycles but with infinitely many
periodic cuts. Any non-central fiber of polynomials is connected, and if $\phi:\Cc_3\to Y$
 is T-stable, then the $\phi$-image of a non-central fiber is a point.
\end{lem}

\begin{proof}
By Lemma \ref{l:reg-all}, $\F=\crp(\lam^{rep}_P)$
for $P\in\Cc_3$ without neutral cycles such that $\lam^{rep}_P$ is a perfect minimal lamination.
By definition, in this case $\lam^{rep}_P=\lam^{rat}_P$.
By Lemma \ref{l:3.9}, every periodic
gap-leaf of $\lam^{rep}_P$ is associated with a periodic cut in $\la(P)$.
Since, by Lemma \ref{l:periodense}, the lamination $\lam^{rep}_P$ has infinitely many periodic finite gap-leaves,
 $P$ has infinitely many periodic cuts.
As $\eta_3^{-1}(\F)=\Cc(P)$ by definition, a non-central fiber of polynomials is connected,  by Theorem \ref{t:SW}.
The last claim of the lemma is immediate.
\end{proof}

Observe that for $P\in\Cc_3$ such that $\Cc(P)$ is
 a non-central fiber of polynomials, the periodic Fatou domains of $P$ have pairwise disjoint closures.

%\section{Laminations compatible with flower-like sets}\label{s:maint}
%Let us sketch the plan of the proof of the main theorem.

%\section{Polynomials with non-repelling fixed points and laminations with flower-like sets}

\section{Main Theorem}

We have constructed
the map $\eta_3:\Cc_3\to X_3$ by the formula $\eta_3(P)=\F_P$. This models $\Cc_3$ %the cubic
%connectedness locus
by the quotient space $X_3$ of the space of all cubic critical portraits constructed
without invoking polynomials so that the modeling space is combinatorial.
In the rest of the paper, we prove that
$\eta_3$ solves the Main Problem. Corollary \ref{c:non-central} follows from
Theorem \ref{t:no-share-1}.

\begin{cor}\label{c:non-central}
A non-central fiber of polynomials is a T-class.
\end{cor}

The central fiber of polynomials $\eta_3^{-1}(\Fc_0)$ is the set of
polynomials $f$ such that $\lam^{rep}_f$ is either empty or central (i.e., it contains a minimal lamination $\lam^{min}_f$
compatible with a flower-like set). This is done in Definition \ref{d:poly-fiber} (recall that
$\lam^{rep}_f$ is introduced in Definition \ref{d:replam} and is, loosely, based upon the family of repelling cuts of $f$).
To complete the proof of the Main Theorem, we need to show that
it equals the union of T-classes of all polynomials with a non-repelling fixed point
which equals the union of T-classes of all polynomials with a neutral fixed point. To this end we first establish
the connection between $\lam^{rep}_f$ and $\lam_f$ in the context of the central fiber of polynomials
(recall that $\lam_f$ is introduced in Definition \ref{d:fullami} and is based, loosely, upon the impressions of external rays
to the Julia set of $f$ and the way they intersect each other).
%Recall also that we consider cubic polynomials $f,$ $g$, $h$, $P$ etc with connected Julia sets.

\begin{comment}

In the rest of the paper, we
show that it coincides with the union of T-classes of all polynomials with non-repelling fixed points.
This will complete, with the help of some additional arguments, the proof of the Main Theorem.
Recall that $\lam^{rep}_f$ is constructed based on the cuts with vertices at repelling periodic points and their
preimages (Definition \ref{d:replam}) while $\lam_f$ is based upon (non-)intersection of impressions of rays
with various arguments (Definition \ref{d:fullami}). Recall that we consider cubic polynomials
(normally denoted $f,$ $g$, $h$, $P$ etc) with connected Julia sets.

\end{comment}

\begin{lem}\label{l:central2}
A polynomial $h\in\Cc_3$ belongs to the
central fiber of polynomials $\eta_3^{-1}(\Fc_0)$ if and only if $\lam_h$ is central.
\end{lem}

Recall that the empty lamination is central (see Definition \ref{d:alliance}).

\begin{proof}
%We claim that the lamination $\lam^{rep}_h$ is central if and only if $\lam_h$ is central.
Let us begin by proving that if $\lam_h$ is non-central then $\lam^{rep}_h$ is non-central.
Assume that $\lam_h$ is non-central. Then it contains a unique non-central minimal lamination $\hlam_h\subset \lam_h$.
By Lemma \ref{l:periodense}, $\hlam_h$ has infinitely many periodic gap-leaves and the iterated pullbacks of any periodic leaf
of $\hlam_h$ is dense in $\hlam_h$. Hence, by Theorem \ref{t:bco1} there exists a periodic gap-leaf $G$ of $\hlam_h$ which
is associated with a repelling cut of $h$ and is, therefore, a periodic gap-leaf of $\lam^{rep}_h$. It follows that $\hlam_h\subset \lam^{rep}_h$.
By Theorem \ref{t:no-share-1}, non-central alliances of laminations are disjoint from the central alliance of laminations.
The inclusion $\hlam_h\subset \lam^{rep}_h$ and the fact that $\hlam_h$ is non-central imply that $\lam^{rep}_h$ is non-central.

Assume now that $\lam^{rep}_h$ is non-central. We need to show that then $\lam_h$ is non-central.
%To begin, let us show that $\lam_h$ is non-empty.
Choose a unique minimal sublamination $\lam^{min}_h$ of $\lam^{rep}_h$.
By Lemma \ref{l:periodense}, we can choose a periodic gap-leaf $G$ in $\lam^{min}_h$ with dense pullbacks
in $\lam^{min}_h$. The gap-leaf $G$ and its pullbacks in $\lam^{min}_h$ separate all gap-leaves of $\lam^{min}_h$
from each other. Therefore, the cuts on the complex plane associated with $G$ and its pullbacks in $\lam^{min}_h$
separate the impressions of all external rays associated with all gap-leaves of $\lam^{min}_h$ from each other.
This implies that the only possible intersections among impressions of external rays of $h$ are possible
between external rays with arguments that belong to a (possibly infinite) gap or a leaf of $\lam^{min}_h$, otherwise
(i.e., if two angles belong to distinct gaps or leaves of $\lam^{min}_h$) their impressions are separated by infinitely many
(pre-)repelling cuts. By definition of $\lam_h$ (Definition \ref{d:fullami}) this implies that $\lam_h\supset \lam^{rep}_h$,
which by Theorem \ref{t:no-share-1} implies that $\lam_h$ is non-central.
\end{proof}

The next lemma is immediate and is left to the reader.

\begin{lem}\label{l:rep-triv}
If $\lam^{rep}_f$ is empty, then $f$ has a non-repelling fixed point and, therefore, belongs to the
union of T-classes of polynomials with a non-repelling fixed point.
If $\lam_f$ is empty, then $f$ belongs to the
union of T-classes of polynomials with a non-repelling fixed point.
\end{lem}

The next lemma complements Lemma \ref{l:rep-triv}.

\begin{lem}\label{l:rep-nontriv}
Suppose that both $\lam^{rep}_f$ and $\lam_f$ are nonempty, and that $\lam_f$ is central.
Then $f$ belongs to the T-class of some polynomial $P$ such that either $P$ has a non-repelling fixed point
or $\lam_P$ is minimal, has no Siegel or caterpillar gaps, but has a flower-like set.
\end{lem}

\begin{proof}
If $\lam_f$ has an invariant Siegel gap, it follows from Theorem \ref{t:bco1}
 that $f$ has a non-repelling fixed point, which implies the desired (we can set $P=f$).
From now on, assume that $\lam_f$ does not have an invariant Siegel gap.

For each of the periodic Siegel gaps of $\lam_f$ (if any),
 remove its critical edge(s) and all iterated $\si_3$-pullbacks of it.
Since all edges of Siegel gaps are isolated, the set $\hlam_f$ of remaining leaves is closed.
Moreover, $\hlam_f$ is an invariant lamination, by Definition \ref{d:sibli}.
Observe that, since $\lam_f$ does not have an invariant Siegel gap by our assumption,
the above described removal of some (countably many, to be precise) leaves from $\lam_f$ does not create
the empty lamination (alternatively, in establishing this one can use the fact that $\lam^{rep}_f$ is nonempty).
Thus, $\hlam_f$ is nonempty and, by construction, has no Siegel gaps.

Choose a minimal sublamination $\hlam^{min}_f$ of $\hlam_f$ compatible with a flower-like set $F$
 (this is possible because $\lam_f$ is central).
By construction, $\hlam^{min}_f$ has no Siegel gaps. Moreover, $\lam_f$ has no
 caterpillar gaps because $\lam_f$ is a q-lamination, and q-laminations cannot have caterpillar gaps as q-laminations cannot
have a critical edge with periodic and non-periodic endpoints. Thus, $\hlam^{min}_f$ has no caterpillar gaps either.
By Theorem \ref{t:kiwi1}, there exists a polynomial $h$ such that $\lam_h=\hlam^{min}_f$.
By construction, $f$ belongs to the T-class of $h$.
Consider cases.

First suppose that $\lam_h=\hlam^{min}_f$ is countable. Then by Lemma \ref{l:count0} it \emph{has}
a flower-like set which does not involve Siegel gaps or caterpillar gaps. Hence we can set
$P=h$ and complete the proof in this particular case.

%If $\lam_h=\hlam^{min}_f$ has a quadratic invariant gap, then $P$
% has a non-repelling fixed point; otherwise, it has a flower-like set, which consists of a
% gap-leaf and Fatou non-caterpillar gaps rotating around it.
%This proves the lemma %in the case
%when $\lam_h=\hlam^{min}_f$ is countable (we can set $P=h$).

Consider the case when $\lam_h=\hlam^{min}_f$ is perfect (minimal laminations are either countable of perfect).
Then, by Lemma \ref{l:flower-prop}, there exists
 a minimal proper lamination $\lam'$ compatible with $\lam_h$ that has a flower-like set $E'$ and has no Siegel or caterpillar gaps.
By Theorem \ref{t:kiwi1}, there exists a polynomial $P$ such that $\lam_P=\lam'$. Next
consider the union $\lam_h\cup \lam'=\tlam$.
The lamination $\tlam$ is proper and has no Siegel gaps, by Lemma \ref{l:unite}.
By Theorem \ref{t:kiwi1}, there exists a polynomial $g$ such that $\lam_g=\tlam$.
It follows that $h$, $P$, and $g$ belong to the same T-class of polynomials.
Since $P$ is exactly of the type mentioned in the lemma, this completes the proof.
\end{proof}

%Recall that the central fiber of polynomials $\eta_3^{-1}(\Fc_0)$ is the set of
%polynomials $f\in \Cc_3$ such that $\lam^{rep}_f$ is either empty or
%central (i.e., it contains a minimal lamination $\lam^{min}_f$
%compatible with a flower-like set).

Theorem \ref{t:central-f} describes the central fiber of polynomials.

\begin{thm}\label{t:central-f}
The central fiber of polynomials is the union of T-classes of polynomials with a non-repelling fixed point
which coincides with the union of T-classes of polynomials with a neutral fixed point.
\end{thm}

%Lemma \ref{l:1way1} is based on Theorem \ref{t:bco1} and some additional arguments.

Recall that a polynomial $g\in \Cc_3$ belongs to the central fiber of polynomials if and only if
$\lam_g$ is central.

\begin{lem}\label{l:1way1}
If $P\in \Cc_3$ has a non-repelling fixed point, then either $\lam_P$ is empty or it has a flower-like set
so that $P$ belongs to the central fiber of polynomials.
If $f\in \Cc_3$ belongs to the T-class of $P$ then $f$ also belongs to the central fiber of polynomials. Thus,
the union of T-classes of polynomials with a non-repelling fixed point is contained in the central fiber of polynomials.
\end{lem}

\begin{proof}
By Theorem \ref{t:bco1}, we may assume that $P$ has a Cremer or Siegel fixed point $a$; assume also
that $\lam_P$ is not empty.
By Theorem \ref{t:bco1}, each $\sim_P$-class $\mathbf{h}$ corresponds to the (connected and closed)
union of impressions of rays with arguments from $\mathbf{h}$.
We call this union the \emph{impression} of $\mathbf{h}$. %, cf . Section \ref{ss:main-prob}.
Suppose that $a$ is a Cremer fixed point.
Consider an angle $\theta$ such that $a\in I_P(\theta)$ (recall: $I_P(\theta)$ is the impression of angle $\theta$
in the dynamical plane of $P$) and then
all the angles from the $\sim_P$-class $\mathbf{h}$ of $\theta$. Since $a$ is fixed,
$\si_3(\theta)\in \mathbf{h}$, which implies that $\mathbf{h}$ is a $\si_3$-invariant $\sim_P$-class.
By Theorem \ref{t:bco1}, the convex hull of $\mathbf{h}$ is an infinite invariant gap, i.e. a flower-like set.

Assume that $a$ is a Siegel point. Let $B_a$ be its invariant Siegel domain. Suppose that there is an angle $\al$
such that $I_P(\al)\supset \bd(B_a)$ and consider the $\sim_P$-class $\mathbf{h}$ of $\al$. As above, it follows that $\mathbf{h}$
is an infinite invariant $\sim_P$-class, and its convex hull is the desired flower-like set of $\lam_P$.

Now, sup\-pose that no impression of an external ray contains $\bd(B_a)$
and $\lam_P$ has no flower-like sets. Choose
a finite invariant gap-leaf $G$ of $\lam_P$; by Theorem \ref{t:bco1} the corresponding external rays land on the same point, say, $a$,
and have impressions coinciding with $\{a\}$.
Take the closure of all rational gap-leaves of $\lam_P$ to obtain a lamination $\hlam$.
Let $\Bc$ be the family of all $\sim_P$-classes with impressions
that intersect $\bd(B_a)$; then $\bigcup \Bc$ is a closed infinite invariant family of angles, and
no two of them are separated by the convex hull of a class from $\hlam$. Hence
$\bigcup \Bc$ is a subset of an invariant infinite gap $U$ of $\hlam$.

The gap $U$ cannot be Siegel or caterpillar
as then it will have an isolated in $\hlam$ critical edge which cannot belong to the edges of rational gap-leaves of $\lam_P$.
Hence $U$ is a quadratic invariant non-caterpillar gap which is tuned by leaves of $\lam_P$. By Lemma \ref{l:fxptgm}
there must exist an invariant gap-leaf of $\lam_P$ inside $U$ or an infinite invariant gap $V\subset U$ of $\lam_P$;
since the former is impossible by construction, then $\lam_P$ has an infinite invariant gap $V$. This
completes the proof of the first claim of the lemma.

Now let $f$ belong to the T-class of $P$. We claim that $f$ belongs to the central fiber
of polynomials. Assume that $\lam_f$ and $\lam_P$ are nonempty.
By definition and by Theorem \ref{t:no-share-1} $\lam_f$ belongs to the same alliance
(equivalently, the same $T$-class of laminations)
as $\lam_P$; by Theorem \ref{t:no-share-1} and by the first claim of the lemma,
the %T-class
alliance of laminations of $\lam_P$ is contained in the central alliance of laminations.
Hence $\lam_f$ is central.
By Lemma \ref{l:central2}, this implies that $f$ belongs to the central fiber of polynomials.
\end{proof}

To deal with the converse claim, let $\approx$ be a laminational
equivalence relation such that $\lam_\approx$ has no Siegel gaps but has a flower-like set.
By Theorem \ref{t:kiwi1}, there are polynomials $f$ with $\approx=\sim_f$
($\sim_f$ is an equivalence relation among the arguments
of external rays defined by the non-disjointness of impressions of these rays).
We claim that if $\lam_\approx$ is as above and minimal, then $f$ can be chosen to have a
neutral fixed point. Let $F_\approx$ be the flower-like set of $\lam_\approx$.
Its \emph{combinatorial rotation number} is $0$ unless $F_\approx$ consists of a gap-leaf $G$
and a cycle (or two cycles) of more than one quadratic Fatou gaps attached to $G$.
In the latter case, the combinatorial rotation number of $F_\approx$ is the same as that of $G$.

\begin{thm}
  \label{t:approx-an}
Let $\approx$ be a laminational equivalence relation such that $\lam_\approx$ is minimal,
has no Siegel gaps, but \emph{has} a flower-like set $F_\approx$.
Set $\mu=e^{2\pi i\rho}$, where $\rho$ is the combinatorial rotation number of $F_\approx$.
There exists a cubic polynomial $P(z)=\mu z+b z^2+z^3$ with $\sim_P=\approx$ and parabolic fixed point $0$.
\end{thm}

The following lemma is a special case of Theorem \ref{t:approx-an}, assuming that all
 critical sets of $\approx$ have finite forward orbits.

\begin{lem}
\label{l:approx-an}
Let $\approx$ and $\mu$ be as in Theorem \ref{t:approx-an} except that $\lam_\approx$ does not have to be minimal,
 and assume that all critical gaps or leaves of $\lam_\approx$ are periodic or preperiodic.
Then there is a geometrically finite $f_*(z)=\mu z+b z^2+z^3$ with $\sim_{f_*}=\approx$.
\end{lem}

A polynomial $P$ is \emph{geometrically finite} if all critical points of $P$ in $J_P$
 have finite forward orbits.

\begin{proof}
We will use the \emph{pinching deformation} of Guizhen Cui and Tan Lei \cite{Cui2018}
 (instead of \cite{Cui2018}, one can use a result of Haissinsky \cite{HAI98} that specifically deals with polynomials).
Let $f$ be a cubic polynomial such that $\approx=\sim_f$ (such $f$ exists by Theorem \ref{t:kiwi1}).
By a suitable affine conjugacy, arrange that $0$ is the fixed point of $f$ corresponding to $G$
 (in the sense that the external rays whose arguments represent the vertices of $G$ land on $0$).
If this point is parabolic for $f$, then set $f_*=f$, and we are done.
Otherwise, the fixed point $0$ of $f$ is repelling, and we will make it into a parabolic point via a pinching surgery.

By \cite[Theorem 1.3]{Cui2018}, there exists a continuous one-parameter family $\{\phi_t\}$, $t\in [0,1)$
 of quasiconformal maps $\phi_t:\C\to\C$ with the following properties:
\begin{itemize}
  \item for each $t\in [0,1)$, the map $\phi_t$ conjugates $f$ with a cubic monic polynomial
   $f_t=\phi_t\circ f\circ \phi_t^{-1}$;
  \item on $\C\sm K_{f_t}$, the map $\phi_t^{-1}$ is holomorphic; $\phi_t(0)=0$;
  \item as $t\to 1$, the polynomial $f_t$ converges uniformly to a cubic polynomial $f_*$ such that 0
   is a parabolic fixed point of $f_*$;
  \item the maps $\phi_t$ converge uniformly to a continuous map $\phi$;
  \item restricting $\phi$ to $J_f$, we obtain a topological conjugacy between $f|_{J_f}$ and ${f_*}|_{J_{f_*}}$.
\end{itemize}
Let $\psi_t:\C\sm K_{f_t}\to \C\sm\ol\D$ be the B\"ottcher map, and similarly with $\psi_*:\C\sm K_{f_*}\to\C\sm\ol\D$.
Note that $\phi_t=\psi^{-1}_t\circ\psi_0$ and $\phi=\psi_*^{-1}\circ\psi_0$ on $\C\sm K_{f}$.
It follows easily that $\phi:\C\sm K_f\to\C\sm K_{f_*}$ is a homeomorphism that conjugates $f$ with $f_*$.
Since by the above the restriction of $\phi$ on $J_f$ topologically conjugates $f|_{J_f}$ and ${f_*}|_{J_{f_*}}$ then
external rays $R_{f}(\al)$ and $R_f(\be)$ land on a common point if and only if
$R_{f_*}(\al)$ and $R_{f_*}(\be)$ land on a common point.

The maps $f$ and $f_*$ are topologically conjugate on their Julia sets; it follows that the
 parabolic fixed point $0$ has the same combinatorial rotation number as the corresponding fixed point $a$ of $f$.
We see that $f'_*(0)=\mu$.
A linear change of variables then takes $f_*$ to the form $f_*(z)=\mu z+bz^2+z^3$ for some $b\in\C$.
\end{proof}

Recall that a cubic polynomial $P$ with \emph{degenerate parabolic} fixed point $0$
has two cycles of petals at $0$.

\begin{lem}
\label{l:PZ}
Consider cubic polynomials $P, \{P_n\}$ from $\Cc_3$.
Suppose that one of the following possibilities takes place: either $P$ has no parabolic fixed points, or $P$ has two distinct parabolic fixed points,
or $P$ has one degenerate parabolic point. If $\al$ is $\si_3$-periodic, $P_n\to P$, $R_{P_n}(\al)$ lands on a point
$z_n$, and $z_n$ converge to a $P$-fixed point $z$, then $R_P(\al)$ lands on $z$.
\end{lem}

\begin{proof}
By Lemma \ref{l:rep} we may assume that $P$ has parabolic points. Moreover, without loss of generality we may assume
that $P$ has parabolic fixed points $0$ and $z$ so that either $z\ne 0$, or $z=0$ is a degenerate parabolic fixed point.
Set $R_n(\al)=R_{P_n}(\al)$.
Passing to a subsequence if necessary, assume that there is a limit of $R_n(\al)\cup\{z\}$ in the Hausdorff metric
associated with the spherical metric on $\C\cup\{\infty\}$.
Denote this limit by $R^{\lim}(\al)$.
Clearly, $R_P(\al)\subset R^{\lim}(\al)$, but the difference $R^{\lim}(\al)\sm R_P(\al)$ may a priori be
larger than just the landing point of $R_P(\al)$.
Theorem A of Petersen--Zakeri \cite{PZ} gives a description of such limits $R^{\lim}(\al)$.
As follows from this theorem and the Basic Structure Lemma \cite{PZ}
(note that different critical points of $P$ are in different parabolic Fatou domains),
the limit set $R^{\lim}(\al)$ is the union of $R_P(\al)$ and an at most countable set of loops based on the landing point of $R_P(\al)$.
Since $z\in R^{\lim}(\al)$, then $R_P(\al)$ lands on $z$ as claimed.
\end{proof}

Let us prove Theorem \ref{t:approx-an} under an additional assumption of ``rationality''.

\begin{lem}
  \label{l:appr-rat1}
The conclusion of Theorem \ref{t:approx-an} holds under the additional assumption that $F_\approx$ is not
a quadratic invariant gap of regular critical type.
\end{lem}

Observe that $F_\approx$ cannot be a caterpillar gap because it is a gap of $\lam_\approx$.

\begin{proof}
If $\lam_\approx$ is countable, then by Lemma \ref{l:count0} and by the assumptions $\lam_\approx$ satisfies all the conditions
of Lemma \ref{l:approx-an}, which implies the desired.

If $\lam_\approx$ is perfect, then, by Lemma \ref{l:no-perf-cent}, the set
 $F_\approx=V$ is a quadratic invariant gap with major $M=\ol{uv}$; by
 the assumptions, $M$ is periodic.
Set $\lam:=\lam_\approx$. As $\lam$ is minimal, by Lemma \ref{l:dense}
it is the closure of the set of all iterated pullbacks of $M$. Let $C\ne V$ be the other critical set of $\lam$.
If $C$ is periodic or preperiodic, then the desired holds by Lemma \ref{l:approx-an}. If $C$ is a Fatou gap then $C$ is periodic or preperiodic. Hence we may assume
that $C$ is a critical gap-leaf of $\lam$ with infinite orbit; since $\lam$ is minimal,
edges of $C$ can be approximated by iterated pullbacks of $M$. Thus, $\lam$ has only one cycle %orbit
of Fatou gaps, namely $\{V\}$, and all infinite gaps of $\lam$ are iterated pullbacks of $V$.

Form the sequence of laminations $\lam_n$ as follows.
First, take all iterated pullbacks $M$ in $\lam$ of level at most $n$.
Let $\lam'_n$ be the thus obtained forward invariant lamination.
Next, take a critical chord $c_n$ compatible with $\lam'_n$ and mapping eventually to an endpoint of $M$.
Set $\lam''_n=\lam'_n\cup\{c_n\}$; this is a forward invariant cubic lamination.
There is only one way of taking iterated pullbacks of leaves of $\lam''_n$ so that they do not cross
the interior of $V$ or the chord $\{c_n\}$.
Adding all these iterated pullbacks and their limits to $\lam''_n$ yields a cubic invariant lamination $\lam'''_n$.

By construction $\lam'''_n$ is proper and $V$ is a gap of $\lam'''_n$.
Define the
%laminational equivalence
 relation $\approx_n$ as $\approx_{\lam'''_n}$ (see Definition \ref{d:fineq});
 by Theorem \ref{t:proper}; it %$\approx_n$
 is a laminational equivalence relation. Set $\lam_n=\lam_{\approx_n}$.
All critical sets of $\lam_n$ are periodic or preperiodic: one critical set is $V$, and the other one
is the finite gap-leaf $C_n\supset c_n$ (since $\approx_n$ is a laminational equivalence relation, $\approx_n$-classes
are finite, and $C_n$ is the convex hull of such a class).
Using Lemma \ref{l:approx-an}, choose a polynomial $P_n(z)=\mu z+b_nz^2+z^3$ with $\la(P_n)=\approx_n$ (in our case $\mu=1$).
Note that every $P_n$ has a critical point $\om_n$ (associated to $C_n$) that eventually maps to a periodic point of $P_n$.

Passing to a subsequence if necessary, assume that $P_n$ converge to a cubic polynomial $P=\mu z+bz^2+z^3$. Consider
cases; set $\lam_P:=\lam_{\sim_P}$

(1) Let $0$ be a nondegenerate parabolic point of $P$. We claim that $M$ is in $\lam_P$, i.e.
that $R_P(u)$ and $R_P(v)$ land on the same point. Let $R_{P_n}(u)$ and
$R_{P_n}(v)$ land on a point $x_n$ (since $M$ is a leaf of each $\lam_n$ then these rays land on the same point);
let $R_P(u)$ land on a point $y$ while $R_P(v)$ land on a point $z$. We need to show that $y=z$.

Lemma \ref{l:PZ} covers the cases when $P$ has two distinct parabolic fixed points or one degenerate parabolic fixed point.
%By Lemma \ref{l:PZ} it suffices to consider the case when
It remains to assume that $P$ has only one parabolic fixed point, and it is nondegenerate.
Then the only parabolic fixed point of $P$ is $0$; assume that $z=0$ while $y\ne 0$ is a repelling periodic point.
However if $v$ is not $\si_3$-fixed this is impossible (a non-fixed periodic ray cannot land on a fixed point with multiplier $1$).
Thus, we may assume that $v=0$, hence $u=1/2$, and, say, $V$ is the quadratic gap with major $M=\hdi$ and located above $M$.
However, by Lemma \ref{l:approx-an}, the leaf $M=\hdi$ of $\lam_n$ then corresponds to the parabolic point $0$
 of each $P_n$;  a contradiction with Lemma \ref{l:PZ} and the assumptions about the point $y$.

By construction and definition of $\lam_P$ it follows that for every $n$ all level $\le n$ pullbacks of $M$ in $\lam_P$
coincide with those of $\lam_n$, hence also with those of $\lam$. We conclude that $\lam\subset\lam_P$.
If $\lam_P$ is minimal, then we are done as $\lam=\lam_P$ in this case.
Suppose $\lam_P$ is not minimal, that is, it tunes the lamination $\lam$ in a nontrivial way, i.e. leaves
of $\lam_P$ are diagonals of $V$. We claim that this is impossible. Indeed, the map on $V$ is two-to-one.
Since the fixed point $0$ is parabolic and has to have a parabolic domain associated to it, this leaves no room for any tuning of
$V$, a contradiction.

(2) Suppose that the parabolic point $0$ of $P$ is degenerate,
that is, $P$ has two cycles of parabolic petals. This implies that
$\lam_{\sim P}$ has a leaf $\hdi$ and two quadratic invariant gaps sharing the same major:
the gap $V$ above $\hdi$ and the gap $U$ below $\hdi$. Let $\{U,\hdi, V\}=F_P$; the presence of sets
from $F_P$ in a lamination completely defines the lamination.
%By Lemma \ref{l:PZ}, $G=\hdi$.
If $\lam\ne \lam_P$ then
there must exist an iterated pullback $\ell$ of $M$
 which is a leaf of $\lam_n$ for all large $n$ located close to $\hdi$ and, say, below $\hdi$
(otherwise $\lam=\lam_P$).

By \cite[Theorem 7.5.2]{bfmot13},
there is an invariant leaf, finite gap, or infinite gap $T$ of $\lam$ located below $M$ and disjoint from $M$.
The set $T$ cannot be an infinite gap as otherwise it is easy to see that $T$ coincides with $U$ \cite{bopt16},
 a contradiction with the existence of $\ell$.
Hence $T$ is a gap-leaf. Since pullbacks of $M$ separate $C$ and $T$, then $T$ is a gap-leaf of $\lam_n$ for large $n$.
By Lemma \ref{l:PZ}, rays whose arguments are vertices of $T$ land on the same fixed repelling point of $P$ which contradicts
the existence of $U$ in $\lam_P$.
\end{proof}

\begin{comment}

In the dynamical plane of $P_n$, the external rays with arguments $\al$, $\be$ land at a point $z_n$
that is eventually mapped to $0$, say $P_n^k(z_n)=0$.
Note that $k$ does not depend on $n$.
The external rays of $P$ with arguments $\al$, $\be$ both land close to $z_n$
(by the Inverse Function Theorem and the fact that no critical point of $P$ is an eventual preimage of $0$).
Since the set $P^{-k}(0)$ is finite, it follows that $R_P(\al)$ and $R_P(\be)$ land at the same point.
It follows that $\ell$ is a compatible with $\lam_{\sim P}$, a contradiction.

\end{comment}

We can now complete the proof of Theorem \ref{t:approx-an}.

\begin{proof}[Proof of Theorem \ref{t:approx-an}]
It remains only to consider the case when $\lam=\lam_\approx$ is the canonical lamination of
 a quadratic invariant gap of regular critical type (that is, the only cubic lamination containing this gap).
Consider the space $\Fc_1$ of all cubic polynomials of the form $f_{1,b}(z)=z+bz^2+z^3$.
Let $\mathcal{H}_1$ be the interior component of the connectedness locus in $\Fc_1$ consisting of
 polynomials with both critical points in the same parabolic Fatou domain attached to $0$.
By \cite[Theorem A]{zha22}, the boundary of $\mathcal{H}_1$ is a Jordan curve.
For every $f\in\bd(\mathcal{H}_1)$, one critical point $\om_1(f)$ lies in the immediate basin $B_f(0)$ of $0$,
 and the other critical point $\om_2(f)$ lies either in the boundary of $B_f(0)$ or
 in a parabolic basin attached to a boundary point of $B_f(0)$.
This is a consequence of the following claim proved in \cite{Roe10} (Theorem 1 and Section 6):
 the boundary $\bd(B_f(0))$ is a Jordan curve, on which $f$ is canonically topologically conjugate with
 the angle doubling map of the circle.
Write $\nu_2(f)$ for the image of $\om_2(f)$ (or of the parabolic point whose immediate basin contains $\om_2(f)$)
 under this conjugacy.
As $f$ loops around $\bd(\mathcal{H}_1)$, the point $\nu_2(f)$ moves continuously on the circle.
In fact, $\nu_2(f)$ makes at least one full loop.
This follows from the Argument Principle and the fact that the function $f\mapsto \om_1(f)-\om_2(f)$
 has at least one zero and no poles in $\mathcal{H}_1$.
We may therefore choose $f\in\bd(\mathcal{H}_1)$ so that to make $\nu_2(f)$ equal any
 prescribed point of the circle.

Now let $M$ be the major of the invariant quadratic gap $U$ of $\lam$.
By our assumption, $M$ is a non-periodic critical leaf.
There is a monotone projection $\psi_U:\uc\to\uc$ that collapses all components of $\uc\sm U$
 and that semi-conjugates $\si_3$ with $\si_2$ on $U\cap\uc$.
In particular, $\psi_U(M)$ is a well-defined point of $\uc$.
Choose $f_*\in\bd(\mathcal{H}_1)$ so that $\psi_U(M)=\{\nu_2(f_*)\}$.
It is easy to see now that the full lamination of $f_*$ coincides with $\lam$.
\end{proof}

Corollary \ref{c:neutrep} relates various types of T-classes of polynomials.

\begin{cor}\label{c:neutrep}
The T-class of any polynomial with an attracting fixed point contains a polynomial with a neutral fixed point.
%The union of the T-classes of polynomials with a non-repel\-ling fixed point coincides with the union of T-classes
%of polynomials with a neutral fixed point.
\end{cor}

\begin{proof}
It suffices to consider the T-class of a polynomial $f$ with a (super)attracting fixed point.
Also, we may assume that the T-class of $f$ is nontrivial; the trivial
class contains the polynomial $f_1(z)=z+\sqrt 3 z+z^3$, by \cite[Theorem 1]{Roe10}.
By Theorem \ref{t:bco1}, the corresponding lamination $\lam_f$ is nonempty and has a quadratic invariant gap $U$.
Consider a minimal sublamination $\hlam$ of $\lam_f$. By definition,
$\hlam$ is nonempty and has a gap $V\supset U$. It follows that $V=U$. Moreover, $\hlam$ cannot contain
a Siegel gap $W$ as otherwise we can remove all edges of $W$ with all their iterated pullbacks and still have a nonempty sublamination of $\lam_f$
(because $U$ will remain after that removal).
By Theorem \ref{t:approx-an}, there exists a polynomial $g$ with a parabolic
fixed point such that $\lam_g=\hlam$, and $f$ belongs to the T-class of $g$ as desired.
\end{proof}

%Let us now prove Theorem \ref{t:central-f}.

\begin{proof}[Proof of Theorem \ref{t:central-f}]
Let $f$ belong to the central fiber of polynomials. By Corollary \ref{c:neutrep} and Lemma \ref{l:rep-triv} %if $\lam^{rep}_f$ is empty
%or $\lam_f$ is empty then $f$ belongs to the union of T-classes of polynomials with a neutral fixed point.
%Suppose
we may assume that $\lam^{rep}_f$ is nonempty and $\lam_f$ is nonempty. By Lemma \ref{l:central2} $\lam_f$ is central.
Hence, by Lemma \ref{l:rep-nontriv}, $f$ belongs to the T-class of a polynomial $P$, and
there are two cases.

(1) If $P$ has a non-repelling fixed point then by Corollary \ref{c:neutrep}
$P$ and $f$ belong to the T-class of a polynomial with a neutral fixed point.

(2) $\lam_P$ is minimal, has no Siegel or caterpillar gaps, but has a flower-like set.
Then by Theorem \ref{t:approx-an}, there exists a polynomial $g$ with a parabolic
fixed point such that $\lam_g=\hlam$.

Thus, the central fiber of polynomials is contained in the union of T-classes of polynomials with neutral fixed point.
The opposite inclusion holds by Lemma \ref{l:1way1}.
\end{proof}

We can now prove the Main Theorem.

\begin{proof}[Proof of the Main Theorem]
The map $\eta_3$ is introduced in Definition \ref{d:pi} (fi\-bers of polynomials are $\eta_3$-point preimages).
This map associates to a po\-ly\-nomial $f\in \Cc_3$ its fiber of critical portraits (introduced in Definition \ref{d:poly-fiber}).
By Theorem \ref{t:main}, the map $\eta_3$ is continuous. By Lemma \ref{l:regfibren}, all non-central fibers of polynomials are connected.
By Corollary \ref{c:non-central}, all non-central fibers of polynomials are T-classes. By Theorem \ref{t:central-f}, the central fiber
of polynomials is the union of T-classes of polynomials with a non-repelling fixed point equal (by Corollary \ref{c:neutrep}) to the union
of T-classes of polynomials with a neutral fixed point. It follows that $\eta_3$ is T-stable.
Since by Definition \ref{d:mcd}, any T-stable map $\Psi:\Cc_3\to Y$ collapses any T-class of polynomials to a point, and the union
of the T-classes of the family of all polynomials with non-repelling fixed points to one point, then
%$\Psi:\Cc_3\to Y$ can be represented as
$\Psi=\widehat \Psi\circ \eta_3$ as desired.
%, and
%one can think of $\Psi$ as the result of first applying $\eta_3$ and mapping $\Cc_3$ to $X_3$, and then,
%possibly, identifying some of the points of $X_3$  by applying $\widetilde \Psi$.
\end{proof}

\end{document}